\documentclass[11pt,twoside]{article}
\usepackage{graphicx,hyperref,color}
\usepackage[utf8]{inputenc}
\usepackage[margin=1in]{geometry}
\usepackage{marvosym}
\usepackage{cite}
\usepackage{verbatim}
\usepackage{amsmath,amssymb,amsthm}
\usepackage{chngcntr}
\usepackage[stable]{footmisc}
\usepackage{float}
\usepackage{url}
\usepackage{xspace}
\usepackage{paralist}
\usepackage{multienum}
\usepackage{enumitem}
\usepackage{thmtools}
\usepackage{thm-restate}
\usepackage{hyperref}
\usepackage{cleveref}
\usepackage{authblk}
\usepackage{fancyhdr}
\fancyhf{}
\pagestyle{fancy}
\definecolor{orange}{rgb}{1,0.5,0}

\newcommand{\lp}{\mathcal{L}\mathcal{P}}

\newtheorem{theorem}{Theorem}[section]
\newtheorem{corollary}[theorem]{Corollary}
\newtheorem{lemma}[theorem]{Lemma}
\newtheorem{claim}[theorem]{Claim}

\newtheorem{observation}[theorem]{Observation}

\title{A better bound on the largest induced forests in
triangle-free planar graphs}
\author[]{ Hung Le}
\affil[]{Oregon State University}
\date{}
\fancyhead[LO]{\textsc{Large induced forests in triangle-free planar graphs}}
\fancyhead[RE]{\textsc{Hung Le}}

\begin{document}
\maketitle

\begin{abstract} 
It is well-known that there exists a triangle-free planar graph of $n$ vertices such that the largest induced forest has order at most $\frac{5n}{8}$. Salavatipour~\cite{Salavatipour06} proved that there is a forest of order at least $\frac{5n}{9.41}$ in any triangle-free planar graph of $n$ vertices.  Dross, Montassier and Pinlou~\cite{DMP14} improved Salavatipour's bound to $\frac{5n}{9.17}$. In this work, we further improve the bound to  $\frac{5n}{9}$. Our technique is inspired by the recent ideas from Lukot'ka, Maz{\'a}k and Zhu~\cite{LMZ15}.
\end{abstract}

\section{Introduction}
 Albertson and Berman~\cite{AB79} conjectured that every planar graph of $n$ vertices has an induced forest of order at least $\frac{n}{2}$. This conjecture has drawn much attention from graph theory community since it implies that there is an independent set of at least $\frac{n}{4}$ vertices in a planar graph of order $n$; the fact is only known through the Four Color Theorem. However, little progress has been made toward proving this conjecture. Borodin acyclic coloring theorem~\cite{Borodin79} for planar graphs implies the existence of a forest of order at least $\frac{2n}{5}$. To the best of our knowledge, Borodin's result is the best bound for Albertson and Berman conjecture. In the same vein, Akiyama and Watanabe~\cite{AW87} conjectured that a bipartite planar graph of $n$ vertices has an induced forest of order at least $\frac{5n}{8}$. They also presented a bipartite planar graph that has the largest induced forest of order exactly $\lceil \frac{5n}{8} \rceil$. The best bound for the Akiyama and Watanabe conjecture is $\frac{4n}{7}$ due to the recent work by Wang, Xie and Yu~\cite{WXY16}. 

Salavatipour~\cite{Salavatipour06} asked the similar question for triangle-free planar graphs. He showed that a triangle-free planar graph of order $n$ has an induced forest of order at least $\frac{17n+24}{32}$, which is approximately $\frac{5n}{9.41}$ (we ignore the additive constant factor as it is insignificant when $n$ is big). Dross, Montassier and Pinlou~\cite{DMP14} improved this bound to $\frac{6n+ 7}{11}$ which is approximately $\frac{5n}{9.17}$. In this work, we further improve this bound to $\frac{5n}{9}$ (Theorem~\ref{thm:main}). We note that Kowalik, Lu\v{z}ar and \v{S}krekovski~\cite{KLS10} obtained  $\frac{5n}{9.01} $ bound which is very closed to our bound, but there is a serious flaw in their proof, as pointed out by Dross, Montassier and Pinlou~\cite{DMP14}. We also note that the example by Akiyama and Watanabe~\cite{AW87} for bipartite planar graphs implies that there exists a triangle-free planar graphs of order $n$ that has the largest induced forest of order at most $\lceil \frac{5n}{8} \rceil$. We believe this bound is a right bound, as evidenced by the work of Alon, Mubayi and Thomas~\cite{AMT01}, who showed that if a triangle-free graph planar graph is cubic, its largest induced forest has order at least $\frac{5n}{8}$. 


\subsection{Previous techniques} \label{sec:prev-tech}
 Here in, we assume that our graph in question, denoted by $G$, is triangle-free. Let $n(G)$ and $m(G)$ be the number of vertices and edges of $G$, respectively. Let $\varphi(G)$ be the order of the largest induced forest in $G$. Previous techniques use discharging to prove:
\begin{equation}\label{eq:prev-op}
 \varphi(G)\geq  a n(G) - b m(G) \quad \mbox{for some appropriate constants $a$ and $b$}
\end{equation}
Since $m(G) \leq 2n(G)-4$ when $G$ is triangle-free planar and $n(G) \geq 3$, Inequality~\ref{eq:prev-op} implies the existence of an induced forest of order at least $(a - 2b)n(G) + 4b$. Salavatipour~\cite{Salavatipour06} proved that Inequality~\ref{eq:prev-op} holds when $(a,b)$ is $(\frac{29}{32}, \frac{6}{32})$, thereby, obtained the bound $\frac{17n(G)+24}{32}$. Dross, Montassier and Pinlou~\cite{DMP14} proved that Inequality~\ref{eq:prev-op} holds when $(a,b)$ is $(\frac{38}{44}, \frac{7}{44})$ and obtained the bound $\frac{6n(G)+ 7}{11}$. Kowalik, Lu\v{z}ar and \v{S}krekovski~\cite{KLS10} tried to modify the Inequality~\ref{eq:prev-op} by adding an additive constant to the right-hand side, but that makes their proof erroneous as noted by Dross, Montassier and Pinlou~\cite{DMP14}. 

To get a good bound on the order of the largest induced forest, one should choose $a$ and $b$ that maximize $(a - 2b)$. However, $a$ and $b$ are constrained by how many vertices one can add to the final induced forest after deleting a subset of vertices and edges of the graph. Roughly speaking, if we delete a set of $\alpha$ vertices, $\beta$ edges from $G$ to obtain a subgraph $G'$ and we can add $\gamma$ vertices from $\alpha$ deleted vertices to the largest induced forest of $G'$ to get an induced forest in $G$, we should choose $a$ and $b$ such that:
\begin{equation}\label{eq:abs-constraint}
a\alpha - b\beta \leq \gamma
\end{equation}
If so, we can apply the inductive proof to show that Inequality~\ref{eq:prev-op} is satisfied as follows:
 
 \begin{equation}
\begin{split}
\varphi(G)& \geq \varphi(G') + \gamma \geq a(n(G)-\alpha) - b(m(G)-\beta) + \gamma\\& \geq a n(G) + b m(G)\\
\end{split}
\end{equation}

This process is repeated until we get down to base cases. As a result, we get a linear program and we need to solve it for $a$ and $b$ that maximize $a - 2b$. For example, Linear Program~\ref{eq:DMP-linear} is from the work of Dross, Montassier and Pinlou~\cite{DMP14}.

\begin{subequations}\label{eq:DMP-linear}
\begin{align}
        b &\geq 0 \\
        0 \leq a &\leq 1 \\
        8a - 12b &\leq 5 \\
        a - 6b &\leq 0 \\
        3a - 10b &\leq 1 
\end{align}
\end{subequations}
We will not try to go into details of Linear Program~\ref{eq:DMP-linear}, but we would like to make a few points that motivate our technique. To get a better bound, one could manage to relax one or more constraints in the linear program. For technical reasons, the first two constraints and the last constraint seems unavoidable. The fourth constraint allows us to only consider graphs of maximum degree at most 5. Thus, one can relax the fourth constraint by considering graphs of higher maximum degree, say 6. But this makes the number of configurations unmanageable. The third constraint, called the \emph{planar cube constraint}, is due to the planar cube (see Figure~\ref{fig:planar-cube}(a)). Specifically, by deleting a planar cube component from $G$, we remove $8$ vertices, $12$ edges and we can only add $5$ vertices back to the forest since the largest induced forest of the planar cube contains $5$ vertices. It turns out that we can relax the planar cube constraint in a different way by introducing two other terms to the right-hand side of Inequality~\ref{eq:prev-op}. Our idea is inspired from the ideas of Lukot'ka, Maz{\'a}k and Zhu~\cite{LMZ15}.

\subsection{Our technique} \label{sec:our-tech}

We use $V(G)$ and $E(G)$ to denote the set of vertices and set of edges, respectively, of $G$. Let $H$ be an induced subgraph of $G$. The degree of $H$, denoted by $\deg_G(H)$, is the number of edges of $G$ with exactly one endpoint in $V(H)$. We use $H^d, H^{d+}$ and $H^{d-}$ to denote an induced subgraph $H$ of degree exactly $d$, at least $d$ and at most $d$, respectively, of graph $G$. Two special graphs of interest in this paper are the planar cube, denoted by $Q_3$, and $K_{3,3}$ minus an edge, denoted by $T_6$ (see Figure~\ref{fig:planar-cube}(b)). The planar cube is a 3-regular planar graph that has $8$ vertices and $12$ edges (see Figure~\ref{fig:planar-cube}(a)). 

\begin{figure}[tbh]
  \centering
   \includegraphics[height=1.2in]{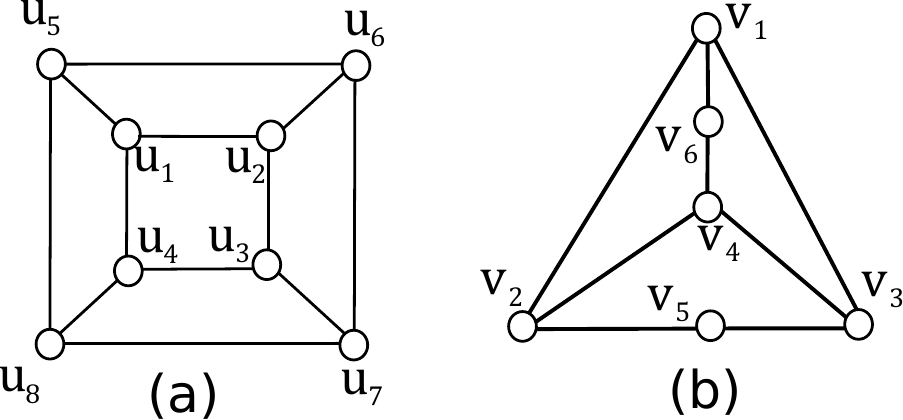}
      \caption{ Two special graphs (a) $Q_3$ and (b) $T_6$.}
  \label{fig:planar-cube}
\end{figure}

Let $p(G)$ and $q(G)$ be the maximum number of $Q_3^{1-}$ vertex-disjoint subgraphs and $T_6$ components of $G$, respectively. We will use discharging technique to prove:

	\begin{equation} \label{eq:gen-sub-main}
	\varphi(G) \geq a n(G) - b m(G) -c p(G) - d q(G)
	\end{equation}
for appropriate constants $a,b,c,d$. Essentially, we add two terms depending on $p(G)$ and $q(G)$ to the right-hand side of Inequality~\ref{eq:prev-op}. That would give us more room to find $a$ and $b$ that maximize $a - 2b$. Since $m(G)\leq 2n(G)$ for every triangle-free planar graphs, Inequality~\ref{eq:gen-sub-main} gives us:

\begin{equation}\label{eq:our-opt}
\varphi(G) \geq (a -2b)n(G)  - c p(G) -d q(G)
\end{equation} 
However, we need a bound that is independent of $p(G),q(G)$. This forces us to introduce another technical layer. In the ideal case, both $p(G)$ and $q(G)$ are 0, Inequality~\ref{eq:our-opt} gives us a good bound on $\varphi(G)$. When $p(G) + q(G)$ is at least 1, Lemma~\ref{lm:cublike-forest} and Lemma~\ref{lm:T6-like-forest} allow us to reduce to the ideal case by adding a large portion of vertices from $Q^{1-}_3$ subgraphs and $T_6$ components to the large induced forest.

\subsection{Our results}
Our main result is Theorem~\ref{thm:main} that gives an improved bound on the order of the largest induced forest in triangle-free planar graphs. 

\begin{theorem} \label{thm:main} Every triangle-free planar graph of $n$ vertices contains an induced forest of order at least $\frac{5n}{9}$.
\end{theorem}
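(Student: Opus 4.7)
My plan is to prove Theorem~\ref{thm:main} in two stages: first establish the stronger Inequality~\eqref{eq:gen-sub-main} for constants $(a,b,c,d)$ chosen so that $a - 2b = 5/9$ (for instance $a=1$, $b=2/9$, with $c, d$ set just large enough to accommodate the $Q_3$ and $T_6$ configurations), and then derive the $\tfrac{5n}{9}$ bound by combining this inequality with Lemma~\ref{lm:cublike-forest} and Lemma~\ref{lm:T6-like-forest} to eliminate the dependence on $p(G)$ and $q(G)$.

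For the first stage, I would argue by induction on $n(G)+m(G)$. Suppose $G$ is a minimum counterexample. A short preliminary analysis should show that $G$ may be assumed connected, of minimum degree at least $2$, and of maximum degree at most $5$ (the analogue of the constraint $a-6b\leq 0$ in Linear Program~\eqref{eq:DMP-linear}). Then I enumerate a list of reducible configurations $H$ of size $\alpha$ with cut-size $\beta$ to $V(G)\setminus V(H)$, each accompanied by a guaranteed number $\gamma$ of vertices of $V(H)$ addable to any induced forest of $G - V(H)$. Each configuration yields a constraint $a\alpha - b(\beta + |E(H)|) - c\,\Delta p - d\,\Delta q \leq \gamma$. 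The crucial novelty over~\cite{DMP14} is that for a $Q_3^{1-}$ subgraph or a $T_6$ component, one may credit $c$ or $d$ to the reduction in $p(G)$ or $q(G)$, thereby providing the slack needed to relax the planar-cube constraint $8a - 12b \leq 5$. After ruling out all reducible configurations, a standard discharging argument on a planar embedding (initial charge $\deg(v)-4$ at each vertex and $|f|-4$ at each face, total charge $-8$) should yield a contradiction, since the remaining local structures all carry non-negative final charge.

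For the second stage, Inequality~\eqref{eq:gen-sub-main} together with $m(G)\leq 2n(G)-4$ gives
\begin{equation*}
\varphi(G) \;\geq\; \tfrac{5}{9} n(G) + 4b - c\, p(G) - d\, q(G).
\end{equation*}
If $p(G)=q(G)=0$, we are done. Otherwise, I peel off a $Q_3^{1-}$ subgraph or a $T_6$ component $H$ and appeal to Lemma~\ref{lm:cublike-forest} (resp.\ Lemma~\ref{lm:T6-like-forest}) to add a sufficient number of vertices of $V(H)$ to any induced forest of $G - V(H)$, preserving the $\tfrac{5}{9}$ ratio on the vertices of $H$. Induction on $G - V(H)$ and summation of the contributions finishes the proof.

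The main obstacle is the first stage: assembling a list of reducible configurations that is short enough to verify by hand and rich enough for the discharging argument to close, while simultaneously keeping $c$ and $d$ small enough that the second-stage bound stays at $\tfrac{5n}{9}$ after the lemmas are applied. The interplay between the new terms $cp(G)$ and $dq(G)$ and the standard reducibility constraints is delicate: raising $c$ or $d$ relaxes the planar-cube and $T_6$ constraints but erodes the gain claimed in the second stage, so these parameters must be tuned tightly. Verifying that every unavoidable local structure dissipates its negative charge under the new discharging rules is where the bulk of the technical work will lie.
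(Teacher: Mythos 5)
Your plan coincides with the paper's strategy in outline: prove the stronger inequality $\varphi(G) \ge a\, n(G) - b\, m(G) - c\, p(G) - d\, q(G)$ by a minimal-counterexample argument over a list of reducible configurations, then peel off $Q_3^{1-}$ subgraphs and $T_6$ components via Lemma~\ref{lm:cublike-forest} and Lemma~\ref{lm:T6-like-forest} to eliminate the $p(G)$ and $q(G)$ dependence. That is exactly what the paper does (Theorem~\ref{thm:sub-main} plus the three-case induction on $\rho(G) = p(G)+q(G)+n(G)$).

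The concrete gap is your proposed constants $(a,b) = (1, 2/9)$: these cannot satisfy the resulting linear program no matter how $c$ and $d$ are chosen, so your suggestion that one can fix $a=1$ and ``set $c,d$ just large enough'' does not work. The base configuration $G = C_4$ (a lone $4$-cycle, with $n = m = 4$, $p = q = 0$, $\varphi = 3$) already forces $4a - 4b \le 3$, i.e.\ the constraint $3 - 4a + 4b \ge 0$ of Linear Program~\ref{eq:our-linear}, which is independent of $c$ and $d$; with $a=1$, $b=2/9$ one gets $4 - 8/9 = 28/9 > 3$, a violation. The paper's LP solution is $a = 25/27$, $b = c = 5/27$, $d = 2/27$, still achieving $a - 2b = 5/9$ but with $a$ strictly below $1$, so the tuning of $a$ and $b$ is not as free as your sketch implies. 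A second, smaller point: the paper does not rerun a charge-shifting argument with $\deg(v)-4$ and $|f|-4$ charges; it instead cites Salavatipour's structure theorem (Theorem~\ref{thm:structure}) for the unavoidable configurations ($2^-$-vertex, $4$-face with a $3$-vertex, $5$-face with four $3$-vertices) and shows each is reducible subject to the LP. The discharging you sketch would effectively be re-proving that theorem.
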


We present the full proof of Theorem~\ref{thm:main} in Section~\ref{sec:main-thm-proof}. The main tool in our proof is Theorem~\ref{thm:sub-main} whose proof is deferred to Section~\ref{sec:proof-sub-main}.

\begin{theorem} \label{thm:sub-main} If $a,b,c,d$ are constants that satisfy all constraints in the Linear Program~\ref{eq:our-linear}, then every triangle-free planar graph $G$ has an induced forest of order at least $ an(G) -b m(G) -c p(G) - d q(G)$.
\begin{subequations}\label{eq:our-linear}
\begin{align}
          a &\geq 0 \label{seq:our-iso}\\
         1 - a &\geq 0 \label{seq:our-iso1}\\
         b &\geq 0 \label{seq:our-triv}\\
         c &\geq 0  \label{seq:our-triv1}\\
         d &\geq 0  \label{seq:our-triv2}\\
         1- a + b -c &\geq 0 \label{seq:our-triv3}\\
         1- a + b -d&\geq 0 \label{seq:our-triv4}\\
         5b - a &\geq 0 \label{seq:our-deg-5}\\
         5 - 8a + 12 b +c &\geq 0 \label{seq:Q-3-ex}\\
         4 - 6a + 8 b + d &\geq 0 \label{seq:T-3-ex} \\
         5-8a + 13 b &\geq 0 \label{seq:Q-3-1-ex}\\
         5 + 13 b - 8 a + c - d &\geq 0 \label{seq:Q-3-1-ex1}\\
         4 + 9 b - 6 a  - d &\geq 0 \label{seq:T-6-1-ex}\\
         5-8a + 14b - d  &\geq 0\label{seq:Q-3-2-ex}\\
         5-8a + 14b - c  &\geq 0\label{seq:Q-3-2-ex1}\\
         5-8a + 15b - c - d  &\geq 0\label{seq:Q-3-3-ex}\\
         3-5a + 10b - c  &\geq 0\label{seq:T-3-45-ex}\\
         3-5a + 10b - d  &\geq 0\label{seq:T-3-45-ex1}\\
         3-4a + 4b &\geq 0\label{seq:ex-2-2}
\end{align}
\end{subequations}

\end{theorem}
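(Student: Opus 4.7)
My plan is to argue by induction on $n(G)+m(G)$ applied to a purported minimum counterexample: let $G$ be a triangle-free planar graph for which the claimed inequality fails, chosen to minimize $n(G)+m(G)$. The proof then splits into two stages: a \emph{reducibility} stage that forbids a long list of local configurations in $G$, and a \emph{discharging} stage that derives a contradiction from their absence.

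In the reducibility stage, each inequality of \eqref{eq:our-linear} is used to rule out one configuration, exactly as sketched in Section~\ref{sec:prev-tech} but with two extra bookkeeping terms $c\,\Delta p$ and $d\,\Delta q$. Concretely, if $G$ contains a configuration from which one can delete $\alpha$ vertices and $\beta$ edges to obtain a smaller triangle-free planar graph $G'$ with $\Delta p := p(G)-p(G')$ and $\Delta q := q(G)-q(G')$, and such that every induced forest of $G'$ extends to an induced forest of $G$ by adding back $\gamma$ of the deleted vertices, then applying the inductive hypothesis to $G'$ gives
\begin{equation*}
\varphi(G) \;\geq\; \varphi(G')+\gamma \;\geq\; a\,n(G)-b\,m(G)-c\,p(G)-d\,q(G) + \bigl(\gamma - a\alpha + b\beta + c\,\Delta p + d\,\Delta q\bigr),
\end{equation*}
contradicting the choice of $G$ whenever the parenthesized expression is nonnegative --- which is exactly one of the constraints. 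For example, \eqref{seq:our-iso1} forbids isolated vertices, \eqref{seq:our-deg-5} forbids a vertex of degree at least $5$, \eqref{seq:ex-2-2} forbids a length-$2$ path of degree-$2$ vertices, and \eqref{seq:Q-3-ex}--\eqref{seq:Q-3-3-ex} successively forbid $Q_3$-subgraphs with $0,1,2,3$ external edges; the near-parallel variants \eqref{seq:Q-3-1-ex1}, \eqref{seq:Q-3-2-ex1}, \eqref{seq:T-3-45-ex1} correspond to the same deletions in the case where a neighboring $T_6$ or an extra $Q_3^{1-}$ shifts $\Delta p$ or $\Delta q$ by one. The $T_6$-type constraints \eqref{seq:T-3-ex}, \eqref{seq:T-6-1-ex}, \eqref{seq:T-3-45-ex}, \eqref{seq:T-3-45-ex1} play the analogous role for the $T_6$ packing.

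Once none of the configurations above is present, I finish by discharging on Euler's formula. Assign every vertex $v$ the initial charge $d(v)-4$ and every face $f$ the initial charge $\ell(f)-4$; since $G$ is triangle-free planar, every face has length at least $4$, and the total initial charge is exactly $-8$. The discharging rules will push charge from high-degree vertices and long faces to the $2$- and $3$-vertices (the only negatively-charged objects), and the absence of each reducible configuration will constrain the local structure around every low-degree vertex tightly enough --- in particular ruling out every near-$Q_3$ and near-$T_6$ neighborhood --- to guarantee that the final charge is nonnegative everywhere, contradicting the negative total.

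The hard part is certainly the discharging step: designing rules that simultaneously exploit every forbidden configuration in \eqref{eq:our-linear} and then verifying nonnegativity by a case analysis around every $2$- and $3$-vertex and every $4$- and $5$-face. A secondary subtlety, which likely explains the sheer length of \eqref{eq:our-linear}, is computing $\Delta p$ and $\Delta q$ correctly when a reduction removes a $Q_3^{1-}$ that happens to touch a $T_6$ (or vice versa): the packing numbers can shift in either direction, and choosing the right one of the near-parallel constraints (for instance \eqref{seq:Q-3-1-ex} versus \eqref{seq:Q-3-1-ex1}) hinges on this bookkeeping.
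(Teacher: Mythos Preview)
Your high-level picture is right---minimal counterexample plus reducibility---but the plan has a structural gap and diverges from the paper in a way that matters.

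First, the paper does \emph{not} finish with a discharging argument. Instead, it quotes Salavatipour's structure theorem (Theorem~\ref{thm:structure}): every two-edge-connected triangle-free planar graph has a $2^-$-vertex, or a $4$-face with a $3$-vertex, or a $5$-face with at least four $3$-vertices. The paper's entire effort goes into the reducibility stage, showing that the minimal counterexample is two-edge-connected with $\delta\ge 3$, has no $4$-face containing a $3$-vertex, and no $5$-face with four $3$-vertices; Theorem~\ref{thm:structure} then gives the contradiction immediately. No new discharging is done.

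Second, and more seriously, the nineteen inequalities in \eqref{eq:our-linear} do \emph{not} each kill one configuration by a simple delete-and-collect step. They are only the irredundant constraints; the actual proof generates roughly twice as many inequalities (see \eqref{lpc:Q-3} through \eqref{lpc:4f-1-3v}) along the way, and most of the heavy reductions in Sections~\ref{subsec:ex-4face-3vers}--\ref{subsec:ex-5face-4-3vers} are \emph{not} pure deletions: they contract an edge (Claim~\ref{clm:4-face-2-vertex}, which is where \eqref{seq:our-triv3}--\eqref{seq:our-triv4} actually arise) or delete a set and add a new edge (Claims~\ref{clm:3-4-adj}, \ref{clm:two-cons-edges-one-4-face}, \ref{clm:xyz-adj}, Lemmas~\ref{lm:no-4face-2-4vers}, \ref{lm:no-4face-1-3vers}, \ref{lm:no-5face-many-3vers}). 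These edge-adding reductions are essential: without them one cannot exclude a $4$-face containing a single $3$-vertex, and your discharging scheme would die exactly there---a $3$-vertex incident only to $4$-faces has charge $-1$ and nothing to draw from. Your misreading of \eqref{seq:ex-2-2} is symptomatic: it does not forbid a $P_2$ of $2$-vertices, it handles the case where $G$ \emph{is} a $4$-cycle.

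So the outline ``one constraint per configuration, then discharge'' underestimates the work by an order of magnitude. To fix the plan, you would need to (i) expand the reducibility list to include the edge-addition and contraction reductions of Section~\ref{sec:proof-sub-main}, deriving the auxiliary inequalities and then showing they are implied by \eqref{eq:our-linear}, and (ii) either reprove Salavatipour's theorem by discharging or simply cite it, as the paper does.
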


\begin{corollary} \label{co:sub-main} If $a,b,c,d$ are constants that satisfy all constraints in the Linear Program~\ref{eq:our-linear}, then every triangle-free planar graph $G$ that contains no $Q^{1-}_3$ subgraph and $T_6$ component has an induced forest of order at least $ (a - 2b)n(G) $.
\end{corollary}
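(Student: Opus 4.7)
The plan is to derive Corollary~\ref{co:sub-main} as an essentially immediate specialization of Theorem~\ref{thm:sub-main}. By the definitions given in Section~\ref{sec:our-tech}, $p(G)$ counts the maximum number of vertex-disjoint $Q_3^{1-}$ subgraphs and $q(G)$ counts the maximum number of $T_6$ components, so under the hypothesis of the corollary both quantities are zero. Plugging $p(G) = q(G) = 0$ into the conclusion of Theorem~\ref{thm:sub-main} collapses the bound to
\[
\varphi(G) \;\geq\; a\, n(G) \;-\; b\, m(G).
\]

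Next I would invoke the classical edge bound for triangle-free planar graphs that follows from Euler's formula: when $n(G) \geq 3$, every triangle-free planar graph satisfies $m(G) \leq 2 n(G) - 4$, and in particular $m(G) \leq 2 n(G)$ for all $n(G)$. Since constraint~\eqref{seq:our-triv} guarantees $b \geq 0$, we have $-b m(G) \geq -2b\, n(G)$, yielding
\[
\varphi(G) \;\geq\; a\, n(G) - 2b\, n(G) \;=\; (a - 2b)\, n(G),
\]
which is the desired inequality.

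The only thing left to check is the degenerate range $n(G) \leq 2$, where $\varphi(G) = n(G)$; here constraints~\eqref{seq:our-iso1} and~\eqref{seq:our-triv} give $a - 2b \leq a \leq 1$, so $(a - 2b)\, n(G) \leq n(G) = \varphi(G)$ trivially. There is no real obstacle in this argument: all of the substantive work is carried out in Theorem~\ref{thm:sub-main}, and the corollary is simply the observation that when the $p(G)$ and $q(G)$ penalty terms vanish, the triangle-free planar edge bound $m(G) \leq 2n(G)$ converts the mixed vertex/edge bound into a clean bound depending only on $n(G)$.
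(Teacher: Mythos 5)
Your proof is correct and follows essentially the same route as the paper: set $p(G)=q(G)=0$ in Theorem~\ref{thm:sub-main} to get $\varphi(G)\geq a\,n(G)-b\,m(G)$, then apply $m(G)\leq 2n(G)$ together with $b\geq 0$. The extra remark on $n(G)\leq 2$ is harmless but unnecessary, since $m(G)\leq 2n(G)$ holds trivially there as well.
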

\begin{proof}
Since $p(G)$ and $q(G)$ are both $0$, Theorem~\ref{thm:sub-main} implies that $G$ has an induced forest of order at least $a n(G) -b m(G)$. Thus, the corollary follows from the fact that $m(G)\leq 2n(G)$.
\end{proof}
\subsection{Preliminaries}
We define the \emph{order} of $G$ to be $|V(G)|$. Let $\delta(G)$ and $\Delta(G)$ be the minimum and maximum vertex degree of $G$, respectively. We denote the length of a face $f$ by $\ell(f)$. We use $\ell$-face, $\ell^+$-face and $\ell^-$-face to refer to a face of length $\ell$, a face of length at least $\ell$ and a face of length at most $\ell$, respectively. This notation is extended naturally to $\ell$-cycles, $\ell^{-}$-cycles and $\ell^+$-cycles. Similarly, we use $d$-vertex, $d^+$-vertex and $d^-$-vertex to refer to a vertex of degree $d$, a vertex of degree at least $d$ and a vertex of degree at most $d$, respectively.  We reserve $u_1,u_2,u_3,u_4,u_5,u_6,u_7,u_8$ for vertices of $Q_3$ and $v_1,v_2,v_3,v_4,v_5,v_6$ for vertices of $T_6$, as in Figure~\ref{fig:planar-cube}.

Let $H$ be a subgraph of $G$. The \emph{induced embedding} of $H$ from a planar embedding of $G$ is the planar embedding obtained by removing images of vertices and edges not in $H$ from the embedding of $G$. A \emph{between} vertex of $H$ is a vertex that has at least one neighbor outside $H$. We use $G\setminus H$ to denote the subgraph obtained from $G$ by deleting $V(H)$. Let $X$ be a subset of vertices of $H$. We say we can \emph{collect} $X$ if we can add $X$ to any induced forest of $G\setminus H$ to get an induced forest in $G$. A \emph{cut}, denoted by $(V(H), V(G)\setminus V(H))$, is the set of edges with exactly one endpoint in $V(H)$. Two vertex-disjoint subgraphs of $G$ are said \emph{adjacent} if there is an edge between them. We use non-$H$ vertex (edge) to refer to a vertex (edge) that is not in $V(H)$ ($E(H)$). 

Let $C$ be a cycle of $G$. By Jordan Curve Theorem, the image of $C$ separates the plane into two regions called an \emph{external region} and an \emph{internal region}. The external region, denoted by $ext(C)$, is the infinite region of the plane and the internal region, denoted by $int(C)$, is the finite region of the plane. We say a vertex or an edge is embedded \emph{inside} (\emph{outside}) a cycle $C$ if its image belongs to $int(C)$ ($ext(C)$).

\section{Proof of Theorem~\ref{thm:main}} \label{sec:main-thm-proof}

\begin{lemma}\label{lm:cublike-forest}
If $H$ is a $Q_3^{3-}$ subgraph of a planar graph $G$, then any forest $F$ in $G\setminus H$ can be extended to an induced forest of $G$ of order $|F| + 5$.
\end{lemma}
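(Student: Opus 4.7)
Let $B \subseteq V(H)$ denote the set of between vertices of $H$, i.e., the vertices having a neighbor in $V(G)\setminus V(H)$. Since each between vertex contributes at least one cut edge, $|B| \leq \deg_G(H) \leq 3$. I will exhibit a set $S \subseteq V(H)$ with $|S| = 5$, $Q_3[S]$ a forest, and $G[V(F)\cup S]$ a forest. The key observation is that any cycle in $G[V(F)\cup S]$ must traverse at least two edges of the cut $(V(H), V(G)\setminus V(H))$, since it must both enter and leave $V(H)$. Hence it suffices to find $S$ such that $Q_3[S]$ is a forest and at most one cut edge is incident to $S$.

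For any $R \subseteq V(Q_3)$ with $|R| = 3$, a degree count using the $3$-regularity of $Q_3$ shows that $Q_3[V(Q_3)\setminus R]$ has exactly $3 + e$ edges on $5$ vertices, where $e := |E(Q_3[R])|$. Since the only $4$-cycles of $Q_3$ are its six faces, $Q_3[V(Q_3)\setminus R]$ contains a $4$-cycle if and only if $R$ lies entirely inside the face opposite to that $4$-cycle, which forces $R$ to be a $2$-edge path in $Q_3$ and hence $e = 2$. Consequently, whenever $e \leq 1$, the subgraph $Q_3[V(Q_3)\setminus R]$ has no $4$-cycle and (for lack of edges) no longer cycle either, so it is a forest.

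For $|B| \leq 2$: using $\alpha(Q_3) = 4$ and the vertex-transitivity of $Q_3$, I adjoin $3 - |B|$ further vertices of $V(H)$ to $B$ to form a $3$-set $R \supseteq B$ with $e(Q_3[R]) \leq 1$; the auxiliary fact then gives $S := V(H)\setminus R$ as a $5$-forest disjoint from $B$, so no cut edge is incident to $S$. For $|B| = 3$, each $B$-vertex contributes exactly one cut edge, and I split on $e(Q_3[B]) \in \{0, 1, 2\}$ (bipartiteness of $Q_3$ forbids $3$). When $e(Q_3[B]) \leq 1$ the auxiliary fact gives $S = V(H)\setminus B$ directly. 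The delicate sub-case, and the main obstacle, is $e(Q_3[B]) = 2$: then $B$ is a $2$-edge path and $Q_3[V(H)\setminus B]$ has $5$ edges containing a $4$-cycle, so no $5$-forest lies inside $V(H)\setminus B$. I resolve this by swapping: include a suitable vertex $b \in B$ and exclude a suitable vertex $c$ of the $4$-cycle (a finite case check, reduced to a single configuration by path-transitivity of $\mathrm{Aut}(Q_3)$), yielding $S$ with $|S \cap B| = 1$ and thus exactly one cut edge incident to $S$.
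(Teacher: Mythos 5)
Your proof is correct and follows essentially the same strategy as the paper's: exhibit a five-vertex set $S\subseteq V(H)$ with $Q_3[S]$ a forest and at most one edge of the cut $(V(H),V(G)\setminus V(H))$ incident to $S$, obtained by discarding a well-chosen triple containing all between vertices (the paper takes the three highest-degree vertices, which automatically contains $B$; you work with $B$ directly) and swapping one vertex in and one out when that triple is a two-edge path. Your explicit edge-count/face criterion for when $Q_3[V(Q_3)\setminus R]$ is a forest is a clean packaging of the case analysis the paper carries out implicitly.
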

\begin{proof}

Let $a,b,c \in \{u_1,u_2,\ldots,u_8\}$ be three highest-degree vertices of $H$ in $G$. If $a,b$ and $c$ are pairwise non-adjacent. By symmetry of $Q_3$, we can assume~w.l.o.g that $a,b,c$ are $u_1,u_3,u_6$, respectively. Then, $F\cup \{u_2,u_4,u_5,u_7,u_8\}$ is an induced forest in $G$.  Thus, we can suppose that two vertices, say $a,b$, are adjacent. We consider two cases:
\begin{description}
\item[Case 1]Three vertices $a,b,c$ induce a connected subgraph of $H$. Then, there is a face in any planar embedding of $Q_3$ that contains all $a,b$ and $c$.  By symmetry of $Q_3$, we can assume that $a,b,c$ are $u_1,u_2,u_3$, respectively. Since $\deg_G(H)\leq 3$, at least one vertex in $\{u_1,u_3\}$ is a $4^{-}$-vertex of $G$. Let $x$ be a $4^{-}$-vertex in $\{u_1,u_3\}$. Then, $F\cup \{x,u_4,u_5,u_6,u_7\}$ is an induced forest in $G$.
\item[Case 2] Three vertices $a,b,c$ induce a dis-connected subgraph of $H$. By symmetry of $Q_3$, we can assume that $a,b,c$ are $u_1,u_2,u_7$, respectively. Then, $F\cup \{u_3,u_4, u_5,u_6,u_8\}$ is an induced forest in $G$.
\end{description}
\end{proof}

\begin{lemma}\label{lm:T6-like-forest}
If $K$ is a $T^{3-}_6$ subgraph of $G$, then any forest $F$ in $G\setminus K$ can be extended to an induced forest of $G$ of order $|F| + 4$.
\end{lemma}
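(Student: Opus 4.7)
I follow the strategy of Lemma~\ref{lm:cublike-forest}: I exhibit a 4-vertex subset $S\subseteq V(K)$ such that $G[S]$ is an induced forest and argue that $F\cup S$ induces a forest in $G$ for every induced forest $F$ of $G\setminus K$. The key reduction is the following observation. If $S\subseteq V(K)$ with $|S|=4$ satisfies (i) $G[S]$ is acyclic, and (ii) each connected component of $G[S]$ is incident to at most one edge of the cut $(V(K),V(G)\setminus V(K))$, then $F\cup S$ induces a forest in $G$. Indeed, any cycle in $G[F\cup S]$ alternates between $S$-segments (paths in $G[S]$) and $F$-segments (paths in $F$), hence uses at least two cut edges; the $S$-endpoints of any two such cut edges are joined by a path in $G[S]$ and so lie in a common component of $G[S]$, contradicting~(ii).

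It remains to construct such an $S$. Let $B$ be the set of between vertices of $K$; since $\deg_G(K)\le 3$, we have $\sum_{v\in B}d_v^{\mathrm{ext}}\le 3$ (where $d_v^{\mathrm{ext}}$ denotes the number of neighbors of $v$ outside $K$) and $|B|\le 3$. If some vertex $w$ of $K$ has $d_w^{\mathrm{ext}}\ge 2$, then excluding $w$ leaves total external degree at most one; for every $w\in V(T_6)$, the graph $T_6\setminus\{w\}$ contains a 4-vertex induced forest (a star centered at a suitable degree-$3$ vertex of $T_6$), and any such $S$ is incident to at most one cut edge, trivially satisfying~(ii). Otherwise every between vertex has $d_v^{\mathrm{ext}}=1$, so $|B|$ equals the number of cut edges; when $|B|\le 1$, any 4-vertex induced forest of $T_6$ works.

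The remaining sub-cases ($|B|=2$ and $|B|=3$) require explicit enumeration. When $|B|=2$, I first try $S=V(K)\setminus B$; by direct inspection, this fails to induce a forest precisely when $B$ is one of the five pairs $\{v_3,v_6\},\{v_1,v_6\},\{v_2,v_6\},\{v_3,v_4\},\{v_3,v_5\}$ (whose removal leaves a $C_4$), and for each such $B$ I display an explicit $S$ containing exactly one between vertex, for instance $S=\{v_1,v_2,v_3,v_4\}$ for $B=\{v_3,v_6\}$, which has only one incident cut edge. When $|B|=3$, since $|V(K)\setminus B|=3<4$, $S$ must contain at least one between vertex; running through the configurations of $B$ modulo the automorphism group of $T_6$, I exhibit for each an $S$ containing either one between vertex in a connected 4-vertex induced forest, or two between vertices lying in the distinct components of one of the two disconnected 4-vertex induced forests $\{v_1,v_2,v_3,v_6\}$ (with $v_3$ isolated) or $\{v_3,v_4,v_5,v_6\}$ (with $v_6$ isolated) of $T_6$. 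The main---though mild---obstacle is this last enumeration; the two disconnected induced forests serve as the ``escape hatches'' that allow two between vertices to be placed in separate components, analogous to Case~2 of the proof of Lemma~\ref{lm:cublike-forest}.
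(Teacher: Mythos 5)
Your argument is correct and takes a genuinely different route from the paper. Your key idea is to isolate a clean sufficient condition---$|S|=4$, $G[S]$ acyclic, and each component of $G[S]$ incident to at most one cut edge---and then run a systematic enumeration organized by the number of between vertices $|B|$ and by whether some between vertex has external degree at least two. The paper instead exploits the symmetry of $T_6$ more aggressively: it fixes the face cycle $C=v_1v_2v_5v_3$ of maximal external degree and branches on how the between vertices sit relative to $C$ and to the fixed set $X=\{v_1,v_6,v_4,v_5\}$, which keeps the casework short (five cases) at the cost of never stating the general forest-extension criterion explicitly. Your approach is more transparent about \emph{why} each $F\cup S$ is a forest and would scale more mechanically to other small gadgets, while the paper's is shorter for this one graph. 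Two minor imprecisions in your write-up are worth fixing: (1) in the justification of your sufficient condition, the phrase ``the $S$-endpoints of any two such cut edges are joined by a path in $G[S]$'' is not literally true for two cut edges drawn from different $S$-segments of the cycle; you should instead take the two cut edges bounding a single maximal $S$-segment, whose endpoints certainly lie in one component of $G[S]$. (2) In the $|B|=3$ analysis, your stated dichotomy (``one between vertex in a connected forest, or two between vertices in distinct components of a disconnected forest'') misses the orbit of $B=\{v_1,v_2,v_6\}$: there the only usable $S$ is the disconnected forest $\{v_3,v_4,v_5,v_6\}$ carrying just one between vertex $v_6$ in its isolated component, which still satisfies your condition but falls outside the two types you list. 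Neither issue affects the validity of the argument, since every configuration does admit an $S$ meeting your criterion.
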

\begin{proof}
By symmetry of $T_6$, we can assume~w.l.o.g that cycle $C = v_1v_2v_5v_3$ has the highest degree among cycles inducing faces of $H$. Let $X = \{v_1,v_6,v_4,v_5\}$. Suppose $K$ has a between vertex, say $v$, that has at least two non-$K$ edges in $G$. By the degree assumption of $C$, $v$ must be a vertex in $C$. If $v \not\in X$, then $F\cup X$ is an induced forest of $G$ of order $|F|+4$. If $v\in X$, then $F\cup \{v_2,v_4,v_6,v_3\}$ is an induced forest of $G$. 

Thus, we can assume that every vertex of $K$ has at most one non-$K$ edge. If at most one vertex in $X$ is a between vertex of $K$, then $F\cup X$ is an induced forest of $G$. Thus, we can assume that at least two vertices in $X$ are between. Since $\deg_G(K) \leq 3$, at most one of two vertices $v_2$ and $v_3$ is a between vertex. Let $x$ be the non-between vertex in $\{v_2,v_3\}$. By the degree assumption of $C$, at most one vertex among $\{v_4,v_6\}$ is between. We have two cases:
\begin{description}
\item[Case 1] No vertex in $\{v_4,v_6\}$ is between. Then, $F\cup\{v_2,v_4,v_6,v_3\}$ is an induced forest of $G$.
\item[Case 2] Exactly one vertex in $\{v_4,v_6\}$ is between. Let $y$ be the non-between vertex in $\{v_4,v_6\}$. If both $v_1$ and $v_5$ are between, then $v_2$ and $v_3$ have no non-$K$ edge since $\deg_G(K) \leq 3$. Thus,  $F\cup\{v_2,v_4,v_6,v_3\}$ is an induced forest of $G$. If $v_1$ is between and $v_5$ is non-between, then $F\cup\{v_5,v_4,v_6,x\}$ is an induced forest of $G$. Otherwise, $v_5$ is between and $v_1$ is non-between. Then, $F\cup\{v_1,x,y,v_5\}$ is an induced forest of $G$.
\end{description}
\end{proof}

\begin{observation} \label{ob:Q-3-dis}
 Any two $Q^{2-}_3$ subgraphs of $G$ must be vertex-disjoint.
\end{observation}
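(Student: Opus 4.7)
The plan is to argue by contradiction. Assume $H_1$ and $H_2$ are distinct $Q_3^{2-}$ subgraphs of $G$ with $S := V(H_1)\cap V(H_2)$ nonempty, and set $k := |S|$. Since two distinct induced $8$-vertex subgraphs of $G$ cannot share their entire vertex set, we have $1 \le k \le 7$. I will derive a contradiction by showing that the cut $(V(H_1), V(G)\setminus V(H_1))$ must contain at least three edges, which violates $\deg_G(H_1) \le 2$ (by symmetry one could equally well use the cut of $H_2$).

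The key counting step takes place inside $H_2 \cong Q_3$. Because $H_2$ is $3$-regular, the vertices of $S$ contribute a total degree of $3k$ inside $H_2$. Letting $e(S) := |E(H_2[S])|$, the number of edges of $H_2$ joining $S$ to $V(H_2)\setminus S$ is therefore exactly $3k - 2e(S)$. Since $H_2$ is an \emph{induced} subgraph of $G$, each such edge is an edge of $G$ with one endpoint in $S\subseteq V(H_1)$ and the other in $V(H_2)\setminus S\subseteq V(G)\setminus V(H_1)$, and so it lies in the cut of $H_1$. Consequently it suffices to verify $3k - 2e(S) \ge 3$ for every $k \in \{1,\ldots,7\}$.

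To upper-bound $e(S)$ I will use two features of $Q_3$: it is triangle-free (since $G$ is) and has exactly $12$ edges. For $k\le 4$, Mantel's theorem gives $e(S)\le \lfloor k^2/4\rfloor$ directly. For $k\ge 5$ I switch to the complementary identity $e(S) = 12 - 3(8-k) + e(V(H_2)\setminus S)$ and apply Mantel once more to the complementary set of size $8-k$. Substituting the bounds, the quantities $3k - 2e(S)$ for $k = 1,\ldots,7$ come out to at least $3, 4, 5, 4, 5, 4, 3$ respectively, all at least $3$ as required. The only step that needs any care is the tight end of the case analysis, namely $k=7$ (where a single removed vertex forces $e(S) = 9$ and the bound $3$ is attained exactly) and $k=1$ (where the three neighbors in $H_2$ of the lone shared vertex form three cut edges of $H_1$); the remaining cases have comfortable slack, so once the identity $3k - 2e(S)$ is in place the proof is a one-line table.
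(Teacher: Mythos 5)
Your argument is correct and follows the same route as the paper: both proofs boil down to observing that any nontrivial vertex cut of $Q_3$ is crossed by at least three edges, and that those edges land in the $G$-cut of the other $Q_3^{2-}$ copy, contradicting $\deg_G \le 2$. The only difference is that you prove the ``$Q_3$ is $3$-edge-connected'' fact explicitly via degree counting and Mantel's bound on $e(S)$, whereas the paper simply states it as a known property of $Q_3$.
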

\begin{proof}
We observe that any non-trivial cut of $Q_3$ has at least 3 edges. Let $H$ and $K$ be two $Q^{2-}_3$ subgraphs of $G$ that share a subset of vertices $X$. Then, the cut $(V(H)\setminus X, X)$ has at least 3 edges. Thus, $\deg_G(K)\geq 3$, contradicting that $K$ is a $Q_3^{2-}$ subgraph.
\end{proof}

\paragraph{Proof of Theorem~\ref{thm:main}} Let $\rho(G) = p(G) + q(G) + n(G)$. We prove Theorem~\ref{thm:main} by induction on $\rho(G)$. The base case is when $\rho(G) = 0$, Theorem~\ref{thm:main} trivially holds. We consider three cases:

\begin{description}
\item[Case 1] Graph $G$ has no $Q_3^{-1}$ subgraph or $T_6$ component. Then, $p(G) + q(G) = 0$. Using a linear programming solver \footnote{We use lp\_solve package \href{http://lpsolve.sourceforge.net/5.5/index.htm}{http://lpsolve.sourceforge.net/5.5/index.htm}. The full implementation can be found at the author's homepage \url{http://web.engr.oregonstate.edu/~lehu/res/lp_final.lp}} to solve Linear Program~\ref{eq:our-linear}, we found that $a - 2b$ is maximized when $a = \frac{25}{27},b = c = \frac{5}{27}, d = \frac{2}{27}$. Corollary~\ref{co:sub-main} implies that if $G$ has an induced forest $F$ of order at least $\frac{5n(G)}{9}$.

\item[Case 2] Graph $G$ contains a $T_6$ component, then $p(G\setminus T_6) \leq p(G)$ and  $q(G\setminus T_6) < q(G)$. Thus, $\rho(G\setminus T_6) < \rho(G)$. By induction, $\varphi(G\setminus T_6)\geq \frac{5n(G\setminus T_6)}{9}  = \frac{5(n(G)-6)}{9} $. By Lemma~\ref{lm:T6-like-forest}, we can collect 4 verties from $T_6$. That implies:
\begin{equation*}
\varphi(G) \geq \varphi(G\setminus T_6) + 4\geq \frac{5(n(G)-6)}{9}  + 4 > \frac{5n(G)}{9} 
\end{equation*}

\item[Case 3] Graph $G$ contains a $Q^{1-}_6$ subgraph, say $H$. Since $H$ has degree at most 1 in $G$, removing $H$ from $G$ can create at most one $T_6$ component and at most one new $Q_3^{1-}$ subgraph. Thus, $p(G\setminus H) \leq p(G)$ and $q(G\setminus H) \leq q(G)+1$. Since $n(G\setminus H) \leq n(G) - 8$, we have $\rho(G\setminus H) < \rho(G)$. By induction, we have $\varphi(G\setminus H) \geq \frac{5\left( n(G)- 8\right)}{9}$. By Lemma~\ref{lm:cublike-forest}, we can collect $5$ vertices from $H$. That implies:
\begin{equation*}
\varphi(G) \geq \varphi(G\setminus H) + 5 \geq \frac{5(n(G)-8)}{9}  + 5>  \frac{5n(G)}{9} 
\end{equation*}
 \end{description}

\section{Proof of Theorem~\ref{thm:sub-main}} \label{sec:proof-sub-main}

Let $G$ be a counter-example of minimal order.  We begin our proof with Observation~\ref{obs:meta-exclusion}, that we will frequently make use of in deriving contradiction. 
\begin{observation} \label{obs:meta-exclusion} Let $L$ be a subgraph of $G$. Let $\alpha, \beta, \gamma,\eta$ be such that: 
\begin{equation}
\begin{split}
\alpha &= n(G) - n(G\setminus L)\\
\beta &\leq m(G) - m(G\setminus L)\\
\gamma &\leq p(G) - p(G\setminus L)\\
\eta &\leq q(G) -q(G\setminus L)
\end{split}
\end{equation}
If we can collect $\lambda$ vertices from $L$, then, $\lambda - \alpha a + \beta b + c\gamma + d\eta$ must be negative.
\end{observation}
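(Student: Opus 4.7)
The plan is to prove the observation by contradiction, exploiting the minimality of $G$ as a counterexample together with the very definition of ``collect''. Suppose, for the sake of contradiction, that $\lambda - \alpha a + \beta b + c\gamma + d\eta \geq 0$. Set $G' := G \setminus L$. Since $L$ is a nonempty subgraph (otherwise $\alpha = \beta = \gamma = \eta = \lambda = 0$ and the statement is vacuous), we have $n(G') < n(G)$, and $G'$ is still triangle-free and planar as a subgraph of such a graph. Therefore $G'$ is not a counterexample to Theorem~\ref{thm:sub-main}, so
$$\varphi(G') \geq a\,n(G') - b\,m(G') - c\,p(G') - d\,q(G').$$

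Next I would use the definition of ``collect'': extending a maximum induced forest of $G'$ by the $\lambda$ collectible vertices from $L$ yields an induced forest of $G$, whence $\varphi(G) \geq \varphi(G') + \lambda$. Substituting $n(G') = n(G) - \alpha$, and using the inequalities $m(G') \leq m(G) - \beta$, $p(G') \leq p(G) - \gamma$, $q(G') \leq q(G) - \eta$, where the latter three may be safely negated because $b,c,d \geq 0$ by constraints \eqref{seq:our-triv}, \eqref{seq:our-triv1} and \eqref{seq:our-triv2}, one obtains
$$\varphi(G) \;\geq\; a\,n(G) - b\,m(G) - c\,p(G) - d\,q(G) + \bigl(\lambda - a\alpha + b\beta + c\gamma + d\eta\bigr).$$
Under the contradictory assumption the parenthesized term is nonnegative, so $\varphi(G) \geq a\,n(G) - b\,m(G) - c\,p(G) - d\,q(G)$, contradicting the fact that $G$ is a counterexample to Theorem~\ref{thm:sub-main}. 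Hence $\lambda - \alpha a + \beta b + c\gamma + d\eta$ must be strictly negative, as claimed.

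The main (and essentially only) obstacle is bookkeeping: one has to be careful with the direction of inequality when substituting the upper bounds $m(G') \leq m(G) - \beta$, $p(G') \leq p(G) - \gamma$, $q(G') \leq q(G) - \eta$ into an expression where these quantities appear with negative coefficients, which is exactly why the nonnegativity of $b$, $c$, $d$ from the linear program is invoked. The substantive content of the observation is that it packages the standard minimal-counterexample reduction step into a single reusable lemma; once the definitions of ``collect'' and of ``minimal counterexample'' are unpacked, the proof is one line of algebra.
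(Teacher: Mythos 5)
Your proof is correct and follows essentially the same minimal-counterexample argument as the paper: apply the inductive bound to $G\setminus L$, add the $\lambda$ collected vertices, substitute the edge/$p$/$q$ bounds using the nonnegativity of $b,c,d$, and derive a contradiction with $G$ being a counterexample. Your explicit remarks about the sign considerations and the need for $L$ to be a nonempty subgraph are worthwhile details that the paper's own proof leaves implicit.
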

\begin{proof}
Suppose that $\lambda - \alpha a + \beta b + c\gamma + d\eta$ is non-negative. Since $G$ is a minimal counter-example, $G\setminus L$ has an induced forest of order at least $a n(G\setminus L) - b m(G\setminus L) - c p(G\setminus L) - dq(G\setminus L)$ which is at least:
\begin{equation*}
an(G) - bm(G) - cp(G) - dq(G) + \beta b + c\gamma + d\eta -\alpha a.
\end{equation*} 
By collecting $\lambda$ vertices from $L$, we get a forest in $G$ of order at least:
\begin{equation*}
an(G) - bm(G) - cp(G) - dq(G) + \lambda + \beta b + c\gamma + d\eta -\alpha a
\end{equation*} 
Since $\lambda - \alpha a + \beta b + c\gamma + d\eta$ is non-negative, $\varphi(G) \geq an(G) - bm(G) - cp(G) - dq(G)$, contradicting that $G$ is a counter-example. 
\end{proof}

\paragraph{Overview of the proof} Our proof of Theorem~\ref{thm:sub-main} relies on the following structural theorem that was proved by Salavatipour~\cite{Salavatipour06}.

\begin{theorem}\label{thm:structure}
If $G$ is a two-edge connected triangle-free planar graph, then, $G$ contains (1) a $2^-$-vertex, or (2) a $4$-face with at least one $3$-vertex, or (3) a $5$-face with at least four $3$-vertices.
\end{theorem}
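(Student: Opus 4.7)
The plan is to prove the theorem by a single-rule discharging argument. Fix a planar embedding of $G$ and assign initial charges $\mu(v) = \deg(v) - 4$ to each vertex $v$ and $\mu(f) = \ell(f) - 4$ to each face $f$. Combining $\sum_v \deg(v) = 2m(G) = \sum_f \ell(f)$ with Euler's formula $n - m + f = 2$ gives
\[
\sum_{v} \mu(v) + \sum_{f} \mu(f) = 4m(G) - 4\bigl(n(G) + f(G)\bigr) = -8.
\]
Triangle-freeness forces $\ell(f) \geq 4$ for every face. Two-edge-connectedness ensures that every face boundary is a simple cycle; moreover, no $3$-vertex can be a cut vertex (otherwise one of its three incident edges would necessarily be a bridge), so every $3$-vertex is incident to three distinct faces.

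Next I would suppose for contradiction that none of (1), (2), (3) holds. Then $\delta(G) \geq 3$, every $4$-face has only vertices of degree at least $4$, and every $5$-face has at most three $3$-vertices. The only elements with negative initial charge are the $3$-vertices, each at $-1$. I would apply the single discharging rule: each $5^+$-face sends $\tfrac{1}{3}$ to every incident $3$-vertex.

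To close the argument I would verify non-negative terminal charge for every element. A $3$-vertex lies on three distinct faces, each of length at least $5$ (since no $4$-face is incident to a $3$-vertex by the assumption negating (2)), and hence receives $3 \cdot \tfrac{1}{3} = 1$, ending at $0$. Each $4^+$-vertex is untouched and started at $\geq 0$. Each $4$-face is untouched and started at $0$. A $5$-face sends out at most $3 \cdot \tfrac{1}{3} = 1$, since it has at most three $3$-vertex recipients by the assumption negating (3), ending at $\geq 0$. A $k$-face with $k \geq 6$ sends out at most $k/3$, ending at $(k-4) - k/3 = (2k - 12)/3 \geq 0$. Summing gives a non-negative total charge, contradicting the initial total $-8$.

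The main obstacle is really just locating a discharging rule that works; the coefficient $\tfrac{1}{3}$ is tight given (2) and (3), since a $5$-face with four $3$-vertex recipients would need to emit $\tfrac{4}{3}$ units while carrying only $1$, and a $4$-face cannot pay anything to a $3$-vertex at all. Beyond these bookkeeping checks the argument is routine and no delicate structural case analysis is needed.
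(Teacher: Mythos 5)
Your proof is correct, and the argument is clean. The paper itself does not prove this theorem — it cites it from Salavatipour~\cite{Salavatipour06} — so there is no in-paper proof to compare against; the relevant question is only whether your blind argument is sound, and it is. The charge normalization $\mu(v)=\deg(v)-4$, $\mu(f)=\ell(f)-4$ with total $-8$, together with the single rule ``each $5^{+}$-face sends $1/3$ to each incident $3$-vertex,'' closes under the negations of (1)--(3) exactly as you describe. You also correctly flagged and handled the one genuine subtlety: in a merely $2$-edge-connected graph, face boundaries need not be simple cycles, so ``incident to three distinct faces'' is not automatic; your observation that a $3$-vertex cannot be a cut vertex (else one of its three edges would be a bridge), hence appears at most once on each facial walk, is exactly what is needed. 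The remaining face-charge checks — $4$-faces pay nothing, $5$-faces pay at most $3\cdot\tfrac13$, and $(k-4)-k/3\ge 0$ for $k\ge 6$ — are routine and correct. This single-rule discharging is the standard way to prove such a structural lemma and, to the best of my recollection, is essentially the argument Salavatipour used.
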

 At high level, we build a linear program, called $\lp$, that initially contains trivial constraints~\eqref{seq:our-iso}, \eqref{seq:our-iso1},  \eqref{seq:our-triv}, \eqref{seq:our-triv1} and~\eqref{seq:our-triv2}. We then consider a finite set of subgraphs, say $\mathcal{L}$, that a triangle-free planar graph can have. For each subgraph, say $H$, in $\mathcal{L}$, by removing it from $G$, we reduce the number of vertices and edges of $G$ by at least, say, $\alpha$ and $\beta$, respectively. Then, we show that we can add $\gamma$ vertices from $H$ to a large induced forest of $G\setminus H$ to get an induced forest of $G$. Observation~\ref{obs:meta-exclusion} tells us that if we choose $a, b,c$ and $d$ such that $\lambda - \alpha a + \beta b + c\gamma + d\eta \geq 0$, then $G$ cannot be a counter-example. Thus, a counter-example graph $G$ cannot contain the subgraph $H$. In other words, by adding the constraint $\lambda - \alpha a + \beta b + c\gamma + d\eta \geq 0$ to $\lp$, we \emph{exclude} $H$ from $G$. We repeat this argument for every subgraph in $\mathcal{L}$ and keep adding linear constraints along the way to $\lp$. Finally, we get a linear program represented by $\lp$ and we show that $\lp$ is equivalent to Linear Program~\ref{eq:our-linear} by removing redundant constraints from $\lp$. Thus, by choosing $a,b,c$ and $d$ satisfies Linear Program~\ref{eq:our-linear}, the counter-example $G$ does not exist, thereby, proving Theorem~\ref{thm:sub-main}. 
 
 In Subsection~\ref{subsec:ex-2-vertex}, we prove that $G$ is two-edge connected and $\delta(G) \geq 3$. In Subsection~\ref{subsec:ex-4face-3vers}, we prove that $G$ has no 4-face with at least one 3-vertex. In Subsection~\ref{subsec:ex-5face-4-3vers}, we prove that $G$ has no 5-face with at least four 3-vertices. This is a contradiction by Theorem~\ref{thm:structure}.


\subsection{Excluding $Q_3^d$ and $T_6^d$ subgraphs} \label{sec:ex-cub-T-6-like}

In this section, by adding more constraints to $\lp$, we will prove that the minimal counter example $G$ cannot contain any $Q_3^d$ or $T_6^d$ subgraph for $d\leq 5$ if $\lp$ is satisfied. 
 
\begin{claim} \label{clm:exclue-Q-3-0}
Graph $G$ has no $Q_3$ component.  
\end{claim}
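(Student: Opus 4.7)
The plan is to apply Observation~\ref{obs:meta-exclusion} directly with $L = H$ for the hypothetical $Q_3$ component $H \subseteq G$, which immediately triggers constraint~\eqref{seq:Q-3-ex} of the linear program. So I would begin by assuming for contradiction that some component $H$ of $G$ is isomorphic to $Q_3$, and then compute the four parameters $\alpha,\beta,\gamma,\eta$ required by the observation.

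Since $H$ is an entire component, $\deg_G(H) = 0$, so removing $H$ strips off exactly the $8$ vertices and $12$ edges of $Q_3$ with nothing else touched; that gives $\alpha = 8$ and $\beta = 12$. For $\gamma$, the key observation is that $H$ itself is a $Q_3^0 \subseteq Q_3^{1-}$ subgraph of $G$, so it contributes at least one unit to $p(G)$; because $H$ is a whole component, deleting it cannot create any new $Q_3^{1-}$ subgraph (no edges cross between $H$ and $G\setminus H$ to be destroyed or rearranged), and by Observation~\ref{ob:Q-3-dis} the maximum packing in $G\setminus H$ is simply a maximum packing in $G$ with the copy at $H$ removed. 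Thus $p(G) - p(G\setminus H) \geq 1$, and I take $\gamma = 1$. An analogous argument for $q$: removing an isolated component of $G$ alters none of the other components, so $q(G\setminus H) = q(G)$ and I take $\eta = 0$.

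Next I would apply Lemma~\ref{lm:cublike-forest} to $H$. Since $\deg_G(H) = 0 \leq 3$, the lemma yields $\lambda = 5$: every induced forest of $G\setminus H$ extends by five vertices of $H$ to an induced forest of $G$. Plugging into Observation~\ref{obs:meta-exclusion} gives
\[
\lambda - \alpha a + \beta b + c\gamma + d\eta \;=\; 5 - 8a + 12b + c,
\]
and this is precisely constraint~\eqref{seq:Q-3-ex} of Linear Program~\ref{eq:our-linear}, hence non-negative under the hypothesis of Theorem~\ref{thm:sub-main}. Observation~\ref{obs:meta-exclusion} then contradicts the assumption that $G$ is a counter-example, completing the proof.

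There is no real obstacle here; the only point that requires a little care is justifying $\gamma = 1$ and $\eta = 0$, i.e., verifying that no auxiliary $Q_3^{1-}$ subgraphs or $T_6$ components of the rest of $G$ are created or destroyed when an isolated $Q_3$ component is deleted. This claim is essentially the opening salvo of Section~4.1 and is meant to be a template for the more delicate exclusions of $Q_3^d$ and $T_6^d$ subgraphs with higher $d$ that follow.
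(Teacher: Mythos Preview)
Your proposal is correct and follows essentially the same route as the paper: assume a $Q_3$ component $H$, invoke Lemma~\ref{lm:cublike-forest} for $\lambda=5$, and apply Observation~\ref{obs:meta-exclusion} with $(\alpha,\beta,\gamma,\eta,\lambda)=(8,12,1,0,5)$ to contradict constraint~\eqref{seq:Q-3-ex}. Your extra care in justifying $\gamma=1$ and $\eta=0$ is more explicit than the paper's version but not substantively different.
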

\begin{proof}

Let $H$ be a $Q_3$ component of $G$. By Lemma~\ref{lm:cublike-forest}, we can collect 5 vertices from $H$. Since $Q_3$ has $8$ vertices, 12 edges, by Observation~\ref{obs:meta-exclusion} with $L = Q_3$ and  $(\alpha,\beta, \gamma,\eta,\lambda) = (8,12,1,0,5)$, $5-8a + 12 b + c $ must be negative. Thus, we obtain contradiction by adding Inequality~\eqref{lpc:Q-3} to $\lp$.
\begin{equation}\label{lpc:Q-3}
5-8a + 12 b + c \geq  0
\end{equation}
\end{proof}

\begin{claim} \label{clm:exclue-T-6}
 Graph $G$ has no $T_6$ component.  
\end{claim}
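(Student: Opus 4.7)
The plan is to mirror the proof of Claim~\ref{clm:exclue-Q-3-0} verbatim, with $Q_3$ replaced by $T_6$ and Lemma~\ref{lm:cublike-forest} replaced by Lemma~\ref{lm:T6-like-forest}. Let $H$ be a $T_6$ component of $G$ and set $L = H$ in Observation~\ref{obs:meta-exclusion}. Since $H$ is an isolated component, it is vertex-disjoint and edge-disjoint from the rest of $G$, so all of the reductions are exact and easy to read off.

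I would next compute the parameters $(\alpha,\beta,\gamma,\eta,\lambda)$ for Observation~\ref{obs:meta-exclusion}. First, $T_6 = K_{3,3}$ minus an edge has $6$ vertices and $8$ edges, so $\alpha = 6$ and $\beta = 8$. Second, since $H$ is a component, $G\setminus H$ has exactly one fewer $T_6$ component than $G$ and the same number of $Q_3^{1-}$ subgraphs, giving $\eta = 1$ and $\gamma = 0$. Third, because $\deg_G(H) = 0 \leq 3$, Lemma~\ref{lm:T6-like-forest} applies and lets me collect $\lambda = 4$ vertices from $H$ to extend any induced forest of $G\setminus H$. Plugging these into Observation~\ref{obs:meta-exclusion}, the quantity
\[
\lambda - \alpha a + \beta b + c\gamma + d\eta \;=\; 4 - 6a + 8b + d
\]
must be negative for $G$ to remain a counter-example.

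Consequently, adjoining the constraint $4 - 6a + 8b + d \geq 0$ to $\lp$ excludes any $T_6$ component from $G$. This is exactly inequality~\eqref{seq:T-3-ex}, which is already present in the Linear Program~\ref{eq:our-linear}, so no new substantive constraint is introduced. There is essentially no obstacle here beyond bookkeeping; the only thing to double check is that deleting the component $H$ really leaves $p(G)$ unchanged, which is immediate because $Q_3^{1-}$ subgraphs of $G$ either lie entirely inside $H$ (impossible, as $T_6$ contains no $Q_3$ subgraph on only $6$ vertices) or entirely outside $H$.
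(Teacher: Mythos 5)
Your proof is correct and follows the paper's argument exactly: the same instantiation of Observation~\ref{obs:meta-exclusion} with $L$ the $T_6$ component and $(\alpha,\beta,\gamma,\eta,\lambda) = (6,8,0,1,4)$, leading to the same constraint $4 - 6a + 8b + d \geq 0$, which is Inequality~\eqref{seq:T-3-ex}. Your extra remark verifying that $p(G)$ is unchanged (since $T_6$ is too small to contain a $Q_3^{1-}$ subgraph) is a reasonable bookkeeping check that the paper leaves implicit.
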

\begin{proof}
Let $H$ is a $T_6$ component of $G$. By Lemma~\ref{lm:T6-like-forest}, we can collect 4 vertices from $H$. By Observation~\ref{obs:meta-exclusion} with $L = T_6$ and $(\alpha,\beta,\gamma,\eta,\lambda) = (6,8,0,1,4)$, $4-6a + 8b + d$ must be negative. Thus, we obtain contradiction by adding Inequality~\eqref{lpc:T-6} to $\lp$.
\begin{equation}\label{lpc:T-6}
4-6a + 8b + d \geq 0 
\end{equation}
\end{proof}

\noindent Claim~\ref{clm:exclue-T-6} implies that if $\lp$ is satisfied, the counter-example $G$ has no $T_6$ component. 
\begin{claim} \label{clm:exclue-Q-3-1}
Graph $G$ excludes $Q_3^{1-}$ as a subgraph. 
\end{claim}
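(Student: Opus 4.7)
The plan is to let $H$ be a putative $Q_3^{1-}$ subgraph of $G$ and derive a contradiction via Observation~\ref{obs:meta-exclusion}, matched against constraints~\eqref{seq:Q-3-1-ex} and~\eqref{seq:Q-3-1-ex1} of $\lp$. Since Claim~\ref{clm:exclue-Q-3-0} already forbids $\deg_G(H)=0$, I may assume $\deg_G(H)=1$; write the unique cut edge as $uv$ with $u\in V(H)$ and $v\notin V(H)$. Applying Lemma~\ref{lm:cublike-forest} (which needs only $\deg_G(H)\le 3$) lets us collect $\lambda=5$ vertices back, and deleting $H$ removes $\alpha=8$ vertices and $\beta=13$ edges. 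The rest of the job is to pin down the admissible pairs $(\gamma,\eta)$ that Observation~\ref{obs:meta-exclusion} permits.

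I will next track how $p$ and $q$ change under the removal of $H$. By Observation~\ref{ob:Q-3-dis} every $Q_3^{2-}$ subgraph of $G$ is vertex-disjoint from $H$, so every other member of a maximum $Q_3^{1-}$-family survives inside $G\setminus H$; hence $p(G\setminus H)\ge p(G)-1$. Any \emph{new} $Q_3^{1-}$ subgraph $H'$ of $G\setminus H$ must have lost at least one edge to $H$ when we deleted $H$, and since $H$ has only the cut edge $uv$ available, such an $H'$ is forced to contain $v$; the disjointness observation then prevents two such $H'$'s from coexisting, giving $\gamma:=p(G)-p(G\setminus H)\in\{0,1\}$. The analogous argument, applied at the component level, shows that any new $T_6$ component $T$ of $G\setminus H$ must consume the cut edge $uv$ and therefore contain $v$, yielding at most one such $T$ and $\eta:=q(G)-q(G\setminus H)\in\{-1,0\}$.

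The decisive step will be ruling out the pair $(\gamma,\eta)=(0,-1)$. In that scenario I would simultaneously have a new $Q_3^{1-}$ subgraph $H'\ni v$ and a new $T_6$ component $T\ni v$. But $H'$ is connected inside $G\setminus H$, so $V(H')$ lies entirely within the connected component of $G\setminus H$ that contains $v$, namely $T$; this forces $8=|V(H')|\le |V(T)|=6$, a contradiction. For the three surviving pairs, Observation~\ref{obs:meta-exclusion} demands that $5-8a+13b+c$ (for $(1,0)$), $5-8a+13b$ (for $(0,0)$), and $5-8a+13b+c-d$ (for $(1,-1)$) all be negative; the first two are forbidden by~\eqref{seq:Q-3-1-ex}, using $c\ge 0$ from~\eqref{seq:our-triv1} for the first, and the third is forbidden by~\eqref{seq:Q-3-1-ex1}. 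I expect the main obstacle to be exactly the bookkeeping above, namely verifying that the degree-$1$ hypothesis funnels every newly created $Q_3^{1-}$ subgraph and every newly created $T_6$ component through the single vertex $v$, which is the ingredient that collapses the worst case and lets the two constraints already in $\lp$ finish the claim.
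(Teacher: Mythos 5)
Your proposal is correct and follows essentially the same route as the paper: reduce to $\deg_G(H)=1$ via Claim~\ref{clm:exclue-Q-3-0}, collect $5$ vertices via Lemma~\ref{lm:cublike-forest}, apply Observation~\ref{obs:meta-exclusion} with $(\alpha,\beta,\lambda)=(8,13,5)$, and land on constraints~\eqref{seq:Q-3-1-ex} and~\eqref{seq:Q-3-1-ex1}. The only cosmetic difference is that the paper splits on whether $H$ is adjacent to a $Q_3^2$ subgraph (the precise mechanism by which $\gamma=0$ occurs), whereas you bound $\gamma$ and $\eta$ directly and rule out the pair $(0,-1)$ by the order-counting argument ($Q_3$ cannot sit inside a $T_6$ component); both analyses yield the same two LP constraints.
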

\begin{proof}
By Claim~\ref{clm:exclue-Q-3-0}, we only need to exclude $Q_3^{1}$ from $G$. Let $H$ be a $Q_3^1$ subgraph of $G$. Let $G' = G \setminus H$. If $H$ is adjacent to a $Q_3^2$ subgraph of $G$, then $p(G') = p(G)$ and $q(G') = q(G) = 0$.  By Lemma~\ref{lm:cublike-forest}, we can collect 5 vertices from $H$. By applying Observation~\ref{obs:meta-exclusion} with $L = H$ and  $(\alpha,\beta,\gamma,\eta,\lambda) = (8,13,0,0,5)$, $5-8a + 13 b$  must be negative. Thus, we obtain contradiction by adding Inequality~\eqref{lpc:Q-3-1a} to $\lp$.
\begin{equation}\label{lpc:Q-3-1a}
5-8a + 13 b  \geq 0
\end{equation} 
If $H$ is not adjacent to a $Q_3^2$ subgraph, then $p(G') = p(G)-1$. Note that $G'$ can has a $T_6$ component if $H$ is adjacent to a $T_6^1$ subgraph in $G$. By Observation~\ref{obs:meta-exclusion} with $L = H$ and $(\alpha,\beta,\gamma,\eta,\lambda) = (8,13,1,-1,5)$, $5-8a + 13 b + c - d$ must be negative. Thus, we obtain contradiction by adding Inequality~\ref{lpc:Q-3-1b} to $\lp$.

\begin{equation}\label{lpc:Q-3-1b}
5-8a + 13 b + c - d  \geq 0
\end{equation} 
\end{proof}

\noindent Claim~\ref{clm:exclue-Q-3-0} and~\ref{clm:exclue-Q-3-1} imply that if $\lp$ is satisfied, $G$ has no $Q^{1-}_3$ subgraph. Herein, we can assume that the counter-example graph $G$ has $p(G) = q(G) = 0$.

\begin{claim} \label{clm:exclue-T-6-1}
Graph $G$ excludes $T_6^{1-}$ as a subgraph. 
\end{claim}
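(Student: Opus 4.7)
The plan is to follow the pattern of Claim~\ref{clm:exclue-Q-3-1}. Since Claim~\ref{clm:exclue-T-6} rules out $T_6$ components, it suffices to exclude a $T_6^1$ subgraph $H$ with unique external edge $vw$, where $v \in V(H)$ and $w \notin V(H)$. Setting $G' = G \setminus H$, the basic reduction parameters are immediate: $\alpha = n(G) - n(G') = 6$, and $\beta = m(G) - m(G') \geq 8 + 1 = 9$, since $T_6 = K_{3,3}-e$ contributes $8$ edges plus the single external edge. Because $T_6^1 \subseteq T_6^{3-}$, Lemma~\ref{lm:T6-like-forest} lets us collect $\lambda = 4$ vertices from $H$.

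The heart of the argument is the bookkeeping for $p$ and $q$. At this point $p(G) = q(G) = 0$ by Claims~\ref{clm:exclue-Q-3-0}, \ref{clm:exclue-T-6}, and~\ref{clm:exclue-Q-3-1}. Removing $H$ alters the outside graph only through the single edge $vw$, so at most one new $Q_3^{1-}$ subgraph and at most one new $T_6$ component can appear in $G'$, and only if $w$ belongs to a $Q_3^2$ or a $T_6^1$ respectively. Since $w$ belongs to at most one such structure, the natural subcases are: (a) $w$ lies in a $T_6^1$, giving $\gamma = 0$ and $\eta = -1$; (b) $w$ lies in a $Q_3^2$, giving $\gamma = -1$ and $\eta = 0$; and (c) neither, giving $\gamma = \eta = 0$.

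In subcase (a), Observation~\ref{obs:meta-exclusion} produces exactly constraint~\eqref{seq:T-6-1-ex}, namely $4 - 6a + 9b - d \geq 0$; subcase (c) produces the weaker $4 - 6a + 9b \geq 0$, implied once $c,d \geq 0$. The main obstacle is subcase (b), which on its face yields the asymmetric inequality $4 - 6a + 9b - c \geq 0$ that is not listed in Linear Program~\ref{eq:our-linear}. My intended resolution is to chain the reduction: after removing $H$, the $Q_3^2$ containing $w$ becomes a $Q_3^1$ subgraph $K$ of $G'$, and we remove $K$ as well, collecting $5$ additional vertices via Lemma~\ref{lm:cublike-forest}. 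The composite removal then has $\alpha = 14$, $\beta \geq 8 + 12 + 2 = 22$, $\lambda = 9$, and the resulting constraint should be absorbed by (or shown equivalent to a consequence of) the $Q_3^2$-type constraints~\eqref{seq:Q-3-2-ex}--\eqref{seq:Q-3-3-ex} already present. Verifying that this chained reduction closes subcase (b) without introducing a genuinely new LP inequality is the step I expect to require the most care, since it depends on checking that the combined subgraph $H \cup K \cup \{vw\}$ does not itself spawn further $Q_3^{1-}$ subgraphs or $T_6$ components that would demand additional terms.
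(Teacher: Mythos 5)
Your case split and the bookkeeping in (a) and (c) match the paper's $t=0$ situation, and you correctly flag case (b) as the real difficulty — but the concern you raise at the end is a genuine gap, and a single chaining step does not close it. After deleting $H \cup K$, the composite still has degree $1$ in $G$ through $K$'s remaining external edge; if that edge lands on yet another $Q_3^2$ of $G$, the residual graph again contains a $Q_3^1$, reintroducing the unwanted $-c$ term, and there is no a priori bound on how long this chain of $Q_3^2$'s hanging off the $T_6^1$ can be. Moreover the LP constraints you hope will absorb the result, \eqref{seq:Q-3-2-ex}--\eqref{seq:Q-3-3-ex}, are not the ones that do the work.

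The paper resolves this by deleting the maximal chain in one shot: with $K$ the $T_6^1$, it defines $H_1,\ldots,H_t$ so that each $H_j$ is a $Q_3^1$ subgraph of $G\setminus(K\cup H_1\cup\cdots\cup H_{j-1})$ adjacent to the previous piece, with $t$ maximal. The union $KH = K\cup H_1\cup\cdots\cup H_t$ still has degree $1$ in $G$, the residual has no $Q_3^{1-}$ subgraph by maximality of $t$ and at most one $T_6$ component, and Lemmas~\ref{lm:cublike-forest} and~\ref{lm:T6-like-forest} let one collect $5t+4$ vertices. Observation~\ref{obs:meta-exclusion} then yields the one-parameter family $(5t+4)-(8t+6)a+(13t+9)b-d\geq 0$ for all $t\geq 0$, which decomposes as \eqref{seq:T-6-1-ex} plus $t$ times \eqref{seq:Q-3-1-ex} and so is implied by the stated LP. Replacing your one-step removal with this inductive chain is exactly what is needed to finish case (b).
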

\begin{proof}
By Claim~\ref{clm:exclue-T-6}, we only need to exclude $T_6^{1}$ from $G$.
Let $K$ be a $T_6^{1}$ subgraph of $G$. Let $H_1,\ldots, H_t$ be the subgraphs of $G$ such that $H_j$ is a $Q_3^1$ subgraph of $G\setminus \{K\cup \{H_1,\ldots,H_{j-1}\}\}$ and $H_j$ is adjacent to $H_{j-1}$ in $G$. Let $t$ be the maximum index such that $G\setminus \{K\cup H_1\cup \ldots \cup H_{t}\}\}$ contains no $Q^1_3$ subgraph. It may be that none of $H_j$ exists and we define $t = 0$ in this case. Let $KH  = K\cup \{H_1,\ldots,H_{t}\}$. We have $\deg_G(KH) = 1$. Thus, $G\setminus KH$ cannot contain any $Q_3$ component, since otherwise, it would be $Q_3^1$ in $G$, contradicting Claim~\ref{clm:exclue-Q-3-1}. Since $\deg_G(KH) = 1$, $G\setminus KH$ contains at most one $T_6$ component. By Lemma~\ref{lm:cublike-forest} and Lemma~\ref{lm:T6-like-forest}, we can collect $5t + 4$ vertices from $KH$. By Observation~\ref{obs:meta-exclusion} with $L = KH$ and $(\alpha,\beta,\gamma,\eta,\lambda) = (8t+6,13t+9,0,-1,5t+4)$, $(5t+4)-(8t+6)a + (13t+9)b - d$ must be negative.  Thus, we obtain contradiction by adding Inequality~\eqref{lpc:T-6-1} to $\lp$.
\begin{equation} \label{lpc:T-6-1}
(5t+4)-(8t+6)a + (13t+9)b - d \geq 0
\end{equation}   
\end{proof}

\begin{claim} \label{clm:exclue-Q-3-2}
Graph $G$ excludes $Q_3^{2-}$ as a subgraph. 
\end{claim}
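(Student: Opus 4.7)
The plan is to mirror the structure of the preceding claims. Since Claims~\ref{clm:exclue-Q-3-0} and \ref{clm:exclue-Q-3-1} already exclude every $Q_3^{1-}$ subgraph, I only need to rule out a $Q_3^{2}$ subgraph. Assume for contradiction that $H$ is such a subgraph, and let $x, y$ denote the (at most two) vertices of $G\setminus H$ incident to the two cut-edges of $H$.

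The first key observation is that because $p(G)=q(G)=0$, any $Q_3^{1-}$ subgraph or $T_6$ component appearing in $G\setminus H$ must contain at least one of $x, y$; and by Observation~\ref{ob:Q-3-dis} together with the vertex-disjointness of $T_6$ components, each of $x, y$ lies in at most one such newly created object. The second key observation is that a new $T_6$ component containing only one of $x, y$ would have been a $T_6^{1}$ subgraph of $G$, contradicting Claim~\ref{clm:exclue-T-6-1}; hence any newly created $T_6$ component must contain both $x$ and $y$, so at most one new $T_6$ can arise in total.

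With these observations in hand, I would enumerate the surviving configurations and invoke Observation~\ref{obs:meta-exclusion} in each. Taking $L=H$ with $(\alpha,\beta,\lambda)=(8,14,5)$ and appropriate $(\gamma,\eta)$ settles the cases of no new structure (yielding $5-8a+14b\geq 0$, which is implied by \eqref{seq:Q-3-1-ex} together with \eqref{seq:our-triv}), exactly one new $T_6$ component containing both $x$ and $y$ (yielding \eqref{seq:Q-3-2-ex}), and exactly one new $Q_3^{1-}$ subgraph (yielding \eqref{seq:Q-3-2-ex1}). The only remaining case is when two distinct new $Q_3^{1-}$ subgraphs $H_1, H_2$ appear, one at $x$ and the other at $y$.

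For that final case I would deploy the chain-extension technique used in Claim~\ref{clm:exclue-T-6-1}: extend $L$ by absorbing $H_1$ and $H_2$, and then repeatedly absorb any further $Q_3^{1-}$ subgraph or $T_6$ component created at the new cut boundary of the enlarged $L$. Each absorption of a $Q_3^{1-}$ subgraph contributes a piece $5-8a+13b$ to the overall inequality (non-negative by \eqref{seq:Q-3-1-ex}), while each absorption of a $T_6$ component contributes a piece $4-6a+8b+d$ (non-negative by \eqref{seq:T-3-ex}), once the bookkeeping correctly reflects that the absorbed $T_6$ component is no longer counted in $q(G\setminus L)$. Summing these pieces with the base term $5-8a+14b$ from $H$ itself produces a combined inequality that is already implied by $\lp$, so no new constraint need be added.

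The main obstacle will be the edge and degree bookkeeping along the chain: I must verify that at every step the absorbed subgraph is $Q_3^{3-}$ or $T_6^{3-}$ in the relevant residual graph so that Lemma~\ref{lm:cublike-forest} or Lemma~\ref{lm:T6-like-forest} applies, that each additional external edge is counted exactly once across shared-edge mergers, and that the changes in $\gamma,\eta$ match the claimed per-step pieces. A secondary subtlety is that the chains emanating from $x$ and from $y$ may eventually collide and merge, requiring care with shared edges; in each such merger the contribution either matches the unmerged count or yields extra slack, so the telescoping decomposition still goes through.
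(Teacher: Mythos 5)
Your casework for the easy sub-cases tracks the paper exactly: removing a $Q_3^2$ subgraph $H$ with a new $T_6$ component gives constraint~\eqref{seq:Q-3-2-ex}, with a new $Q_3^{1-}$ gives~\eqref{seq:Q-3-2-ex1}, and with nothing new gives $5-8a+14b\geq 0$, which is~\eqref{seq:Q-3-1-ex} plus~\eqref{seq:our-triv}. The divergence is in the hard case where two new $Q_3^{1-}$ subgraphs appear. The paper does \emph{not} run a chain. Instead it chooses $H$ to minimize the number of $Q_3^{1-}$ subgraphs in $G\setminus H$; since the minimum is already $2$, \emph{every} $Q_3^2$ subgraph of $G$, upon removal, leaves exactly two $Q_3^1$ subgraphs, each of which is itself a $Q_3^2$ subgraph of $G$. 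This makes the auxiliary adjacency graph $\mathcal{H}$ on $Q_3^2$ subgraphs $2$-regular, so the $Q_3^2$'s partition into cycles that are degree-$0$ unions in $G$; removing all of them at once gives $(\alpha,\beta,\gamma,\eta,\lambda)=(8k,13k,0,0,5k)$ and the constraint $k(5-8a+13b)\geq 0$, already implied by~\eqref{seq:Q-3-1-ex}. This is cleaner than a step-by-step absorption: it avoids all incremental edge counting.

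Your chain-extension route is viable, but the bookkeeping you sketch has two concrete errors that would need repair. First, the claim that ``each absorption of a $Q_3^{1-}$ subgraph contributes a piece $5-8a+13b$'' is only true when the absorbed subgraph is $Q_3^1$ in the current residual. When the two chains close into a cycle, the final absorbed cube is $Q_3^0$ in the residual and contributes only $5-8a+12b$; the correct total for a length-$k$ cycle is exactly $k(5-8a+13b)$, not $(5-8a+14b)+(k-1)(5-8a+13b)$, and the isolated piece $5-8a+12b$ is \emph{not} guaranteed non-negative by $\lp$ (only $5-8a+12b+c\geq 0$ is). Second, your remark that a cycle-closing merger ``yields extra slack'' is backwards: $\beta = 13k$ in the cycle case versus $\beta\geq 13k+1$ otherwise, so the cycle case is the \emph{tightest} one. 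Neither slip is fatal---the cycle case reduces to a positive multiple of~\eqref{seq:Q-3-1-ex}, and the open-ended case with one $T_6$ at the boundary decomposes as $(5-8a+14b-d)+(k-1)(5-8a+13b)$ using~\eqref{seq:Q-3-2-ex} and~\eqref{seq:Q-3-1-ex}---but the per-step accounting must be redone correctly, which is precisely the complexity the paper's minimality argument sidesteps.
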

\begin{proof}By Claim~\ref{clm:exclue-Q-3-1}, we only need to exclude $Q_3^{2}$ from $G$. Let $H$ be a $Q_3^2$ subgraph of $G$. Suppose that $G\setminus H$ contains a $T_6$ component, say $K$. By Claim~\ref{clm:exclue-T-6-1}, $K$ is the only $T_6$ component of $G\setminus H$. By Claim~\ref{clm:exclue-Q-3-1}, $p(G\setminus H)= 0$. By  Observation~\ref{obs:meta-exclusion} with $L = H$ and $(\alpha,\beta,\gamma,\eta,\lambda) = (8,14,0,-1,5)$, $5-8a + 14b - d $ must be negative.  Thus, we obtain contradiction by adding Inequality~\eqref{lpc:Q-3-2a} to $\lp$.
\begin{equation}\label{lpc:Q-3-2a}
5-8a + 14b - d \geq 0
\end{equation} 

Thus, we may assume that $G\setminus Q_3^2$ has no $T_6$ component for any $Q_3^2$ subgraph of $G$. Without loss of generality, we choose $H$ to be a $Q_3^2$ subgraph such that $G\setminus H$ has the least number of $Q^{-1}_3$ subgraphs. By Claim~\ref{clm:exclue-Q-3-1}, $G\setminus H$ has at most two $Q^{-1}_3$ subgraphs. If $G\setminus H$ has exactly one $Q^{1-}_3$ subgraph, say $M$, then $M$  must be adjacent to $H$. By Observation~\ref{obs:meta-exclusion} with $L = H$ and $(\alpha,\beta,\gamma,\eta,\lambda) = (8,14,-1,0,5)$, $5-8a + 14b - c $ must be negative.  Thus, we obtain contradiction by adding Inequality~\eqref{lpc:Q-3-2b} to $\lp$.
\begin{equation}\label{lpc:Q-3-2b}
5-8a + 14b - c \geq 0
\end{equation} 

If $G\setminus H$ has two $Q^{1-}_3$ subgraphs. By Claim~\ref{clm:exclue-Q-3-1}, two $Q_3^{1-}$ subgraphs are $Q_3^{1}$ subgraphs. By our choice of $H$, we conclude that, for any $Q_3^2$ subgraph of $G$, $G\setminus Q_3^2$ must have exactly two $Q^1_3$ subgraphs. Since $G$ excludes $Q_3^1$ by Claim~\ref{clm:exclue-Q-3-1}, any $Q_3^2$ subgraph of $G$ must adjacent to two other $Q_3^2$ subgraphs. Let $\mathcal{H}$ be a graph such that each vertex of $\mathcal{H}$ corresponds to a $Q^2_3$ subgraph of $G$ and each edge of $\mathcal{H}$ connects two adjacent $Q^2_3$ subgraphs of $G$. Then,  $\mathcal{H}$ is a 2-regular graph. In other words, $\mathcal{H}$ is a collection of cycles. By Lemma~\ref{lm:cublike-forest}, we can collect $5|V(\mathcal{H})|$ vertices from $Q_3^2$ subgraphs of $G$. By Observation~\ref{obs:meta-exclusion} with $L$ to be the induced subgraph of $G$ induced by vertices in all $Q_3^2$ subgraphs of $G$ and $(\alpha,\beta,\gamma,\eta,\lambda) = (8|V(\mathcal{H})|,13|V(\mathcal{H})|,0,0,5|V(\mathcal{H})|)$, $|V(\mathcal{H})|(5-8a + 13b)$ must be negative, this contradicts Inequality~\eqref{lpc:Q-3-1a}.
\end{proof}

\begin{claim} \label{clm:exclue-Q-3-3}
Graph $G$ excludes $Q_3^{3-}$ as a subgraph.
\end{claim}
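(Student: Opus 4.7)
The plan is to follow the same template as Claims~\ref{clm:exclue-Q-3-1} and~\ref{clm:exclue-Q-3-2}. Since $G$ has no $Q_3^{2-}$ subgraph by Claim~\ref{clm:exclue-Q-3-2}, it suffices to exclude $Q_3^3$. Let $H$ be a $Q_3^3$ subgraph of $G$, and invoke Observation~\ref{obs:meta-exclusion} with $L = H$. The basic parameters are $\alpha = 8$, $\beta = 12 + 3 = 15$ (the twelve internal edges of $Q_3$ plus the three cut edges of $H$), and $\lambda = 5$ from Lemma~\ref{lm:cublike-forest}. By Claims~\ref{clm:exclue-Q-3-2} and~\ref{clm:exclue-T-6-1} we have $p(G) = q(G) = 0$, so the remaining task is to upper bound $p(G\setminus H)$ and $q(G\setminus H)$.

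The key step will be to show that the three cut edges of $H$ admit at most one ``new'' bad structure in $G\setminus H$. Any new $Q_3^{1-}$ subgraph $M$ of $G\setminus H$ must have been a $Q_3^{2+}$ subgraph of $G$ by Claim~\ref{clm:exclue-Q-3-2}, and so $M$ is incident to at least two of the three cut edges of $H$. Likewise, any new $T_6$ component $K$ of $G\setminus H$ must have been a $T_6^{2+}$ subgraph of $G$ by Claim~\ref{clm:exclue-T-6-1}, and so $K$ is also incident to at least two cut edges. Observation~\ref{ob:Q-3-dis} already rules out vertex-overlapping new $Q_3^{1-}$ subgraphs, and a short connectivity argument rules out an overlap between $M$ and $K$: if they shared a vertex, then because $K$ has no outgoing edges in $G\setminus H$ and $M$ is connected, $V(M)$ would have to be contained in $V(K)$, which is absurd since $|V(M)| = 8 > 6 = |V(K)|$. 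Combining these observations with the cut-edge budget of $3$ forces $(\gamma,\eta) \in \{(0,0),(-1,0),(0,-1)\}$.

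In each of these three configurations, Observation~\ref{obs:meta-exclusion} would demand, respectively, one of $5 - 8a + 15b \geq 0$, $5 - 8a + 15b - c \geq 0$, or $5 - 8a + 15b - d \geq 0$; all three are implied, using $c,d \geq 0$, by the single uniform constraint
\[
5 - 8a + 15b - c - d \geq 0,
\]
which is Inequality~\eqref{seq:Q-3-3-ex}. Adding it to $\lp$ yields the contradiction and completes the exclusion of $Q_3^{3-}$. The main obstacle is the vertex-disjointness argument between $M$ and $K$: without it the cut-edge budget of $3$ could be double-counted and the whole case analysis would collapse. Once that is clean, the rest reduces to the routine bookkeeping provided by Observation~\ref{obs:meta-exclusion}.
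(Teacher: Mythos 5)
Your proof is correct and follows essentially the same route as the paper: invoke Observation~\ref{obs:meta-exclusion} on a $Q_3^3$ subgraph $H$ with $(\alpha,\beta,\lambda)=(8,15,5)$, bound the new $Q_3^{1-}$/$T_6$ structures in $G\setminus H$ via the cut-edge budget, and add $5-8a+15b-c-d\ge 0$ to $\lp$. The only differences are cosmetic: the paper simply notes ``$G\setminus H$ has at most one $Q_3^{1-}$ subgraph'' and ``at most one $T_6$ component'' and immediately takes the conservative $(\gamma,\eta)=(-1,-1)$, which already yields the constraint; your sharper case analysis ruling out $(-1,-1)$ simultaneously (and the $M$--$K$ disjointness argument) is sound but unnecessary, since the coarser bound gives the same inequality. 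One small slip: Claim~\ref{clm:exclue-Q-3-2} forces a $Q_3$ subgraph of $G$ to be $Q_3^{3+}$ (not merely $Q_3^{2+}$); this stronger fact is what you actually need, and it is what makes the ``at least two cut edges'' deduction go through.
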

\begin{proof}
By Claim~\ref{clm:exclue-Q-3-2}, we only need to exclude $Q_3^{3}$ from $G$.
Let $H$ be a $Q_3^3$ subgraph in $G$. By Claim~\ref{clm:exclue-Q-3-2}, $G\setminus H$ contains at most one $Q^{1-}_3$ subgraph. By Claim~\ref{clm:exclue-T-6-1}, $G\setminus H$ contains at most one $T_6$ component. By Observation~\ref{obs:meta-exclusion} with $L = H$ and $(\alpha,\beta,\gamma,\eta,\lambda) = (8,15,-1,-1,5)$, $5-8a + 15b - c - d $ must be negative. Thus, we obtain contradiction by adding Inequality~\eqref{lpc:Q-3-3} to $\lp$.
\begin{equation} \label{lpc:Q-3-3}
5-8a + 15b - c - d \geq 0
\end{equation} 
\end{proof}

\noindent We obtain the following corollary of Claim~\ref{clm:exclue-Q-3-3}.
\begin{corollary} \label{cor:no-cub-deg-2-comp}
If $H$ is a subgraph of degree 2 of $G$ and $\lp$ is satisfied, then $G\setminus H$ has no $Q^{1-}_3$ subgraph.
\end{corollary}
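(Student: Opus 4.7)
The plan is a short direct contradiction using Claim~\ref{clm:exclue-Q-3-3}, which says that if $\lp$ is satisfied then $G$ contains no $Q_3^{3-}$ subgraph. So I want to argue that any hypothetical $Q_3^{1-}$ subgraph $M$ of $G\setminus H$ would already be a $Q_3^{3-}$ subgraph of $G$ once we add $H$ back in.

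First I would suppose for contradiction that $M$ is a $Q_3^{1-}$ subgraph of $G\setminus H$, so $\deg_{G\setminus H}(M)\le 1$. Since $V(M)\subseteq V(G)\setminus V(H)$, the edges of $G$ with exactly one endpoint in $V(M)$ split into two disjoint sets: those with the other endpoint in $V(G)\setminus V(H)\setminus V(M)$, which contribute exactly $\deg_{G\setminus H}(M)$, and those with the other endpoint in $V(H)$, which form a subset of the cut $(V(H),V(G)\setminus V(H))$. The second set therefore has size at most $\deg_G(H)=2$.

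Combining these two bounds gives
\begin{equation*}
\deg_G(M)\;=\;\deg_{G\setminus H}(M)\;+\;\bigl|\{e\in E(G):e\text{ has one endpoint in }V(M)\text{ and one in }V(H)\}\bigr|\;\le\;1+2\;=\;3.
\end{equation*}
Hence $M$ is a $Q_3^{3-}$ subgraph of $G$, contradicting Claim~\ref{clm:exclue-Q-3-3}. This contradiction shows that $G\setminus H$ contains no $Q_3^{1-}$ subgraph.

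I do not expect a real obstacle here: the only thing to verify carefully is the decomposition of edges incident to $V(M)$ (so that nothing is double-counted when we pass from $G\setminus H$ to $G$), and the observation that edges between $V(M)$ and $V(H)$ lie in the cut of $H$ and are therefore bounded by $\deg_G(H)=2$. Everything else is a direct invocation of Claim~\ref{clm:exclue-Q-3-3}.
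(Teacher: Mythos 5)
Your proof is correct and is exactly the argument the paper has in mind: the paper states the corollary without proof, labeling it a consequence of Claim~\ref{clm:exclue-Q-3-3}, and the intended reasoning is precisely that a $Q_3^{1-}$ subgraph of $G\setminus H$ would be a $Q_3^{3-}$ subgraph of $G$ because at most $\deg_G(H)=2$ additional cut edges can come from $V(H)$. Your careful decomposition of the edges leaving $V(M)$ (into those going to $V(G)\setminus V(H)\setminus V(M)$ and those going to $V(H)$) makes the bound $\deg_G(M)\le 1+2=3$ rigorous, and the rest is a direct invocation of Claim~\ref{clm:exclue-Q-3-3}.
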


\begin{claim}\label{clm:exclude-T-6-2}
Graph $G$ excludes $T_6^{2-}$ as a subgraph.
\end{claim}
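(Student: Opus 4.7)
By Claim~\ref{clm:exclue-T-6-1}, I only need to exclude $T_6^2$ subgraphs from $G$. The plan is to let $K$ be a $T_6^2$ subgraph of $G$ and apply Observation~\ref{obs:meta-exclusion} with $L = K$. The easy parameters are $\alpha = |V(K)| = 6$, $\beta = m(K) + \deg_G(K) = 8 + 2 = 10$, and $\lambda = 4$ from Lemma~\ref{lm:T6-like-forest} applied to $K$ (whose hypothesis $\deg_G(K) \leq 3$ is immediate).

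Next I need to read off $\gamma$ and $\eta$, i.e., to control how $p$ and $q$ can change when $K$ is removed. Since $\deg_G(K) = 2$, Corollary~\ref{cor:no-cub-deg-2-comp} gives $p(G \setminus K) = 0 = p(G)$, so $\gamma = 0$. The main obstacle is the analogous bound on $q$, where I would argue $q(G \setminus K) \leq 1$ as follows. Any $T_6$ component $K'$ of $G \setminus K$ is a $T_6$ subgraph of $G$ whose external edges in $G$ can only lie in the cut $(V(K), V(G) \setminus V(K))$, so $\deg_G(K') \leq \deg_G(K) = 2$; Claim~\ref{clm:exclue-T-6-1} then rules out $\deg_G(K') \leq 1$, forcing $\deg_G(K') = 2$ exactly, and both external edges of $K$ end up in $K'$. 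Consequently at most one such $K'$ can exist.

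Finally I split on $q(G \setminus K) \in \{0,1\}$ and invoke Observation~\ref{obs:meta-exclusion}. If $q(G \setminus K) = 0$, then $\eta = 0$ and the parameters $(\alpha, \beta, \gamma, \eta, \lambda) = (6, 10, 0, 0, 4)$ require $4 - 6a + 10b \geq 0$ to be added to $\lp$. If $q(G \setminus K) = 1$, then $\eta = -1$ and I instead add $4 - 6a + 10b - d \geq 0$. Both inequalities follow from Inequality~\eqref{lpc:T-6-1} evaluated at $t = 0$ (which reads $4 - 6a + 9b - d \geq 0$) together with the trivial $b \geq 0$ and $d \geq 0$, so neither is a new non-redundant constraint; still, either one already contradicts $G$ being a minimal counter-example, which completes the exclusion.
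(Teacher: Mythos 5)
Your proof is correct and takes essentially the same route as the paper: same removed subgraph, same parameters $(\alpha,\beta,\gamma,\eta,\lambda)=(6,10,0,-1,4)$ feeding into Observation~\ref{obs:meta-exclusion}, and the same resulting constraint $4-6a+10b-d\geq 0$. Your explicit degree-counting argument for why $q(G\setminus K)\leq 1$ and the case split on $q(G\setminus K)$ only unpack what the paper compresses into a single citation of Claim~\ref{clm:exclue-T-6-1} and a single choice $\eta=-1$, and your redundancy observation matches the paper's own linear-combination $(\text{\ref{lpc:T-6-2}})=(\text{\ref{seq:T-6-1-ex}})+(\text{\ref{seq:our-triv}})$.
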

\begin{proof}
By Claim~\ref{clm:exclue-T-6-1}, we only need to exclude $T_6^{2}$ from $G$. Let $H$ be a $T_6^{2}$ subgraph of $G$.  By Corollary~\ref{cor:no-cub-deg-2-comp}, $G\setminus H$ has no $Q^{1-}_3$ subgraph. By Claim~\ref{clm:exclue-T-6-1}, $G\setminus H$ has at most one $T_6$ component. By Observation~\ref{obs:meta-exclusion} with $L = H$ and $(\alpha,\beta,\gamma,\eta,\lambda) = (6,10,0,-1,4)$, $4-6a + 10b - d$ must be negative. Thus, we obtain contradiction by adding Inequality~\eqref{lpc:T-6-2} to $\lp$.
\begin{equation}\label{lpc:T-6-2}
4-6a + 10b - d \geq 0
\end{equation}
\end{proof}
\begin{claim} \label{clm:no-deg-5}
Graph $G$ has no $5^+$-vertex.
\end{claim}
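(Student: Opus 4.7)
The plan is to derive a contradiction by applying Observation~\ref{obs:meta-exclusion} to a supposed $5^+$-vertex $v$ of $G$. My primary move is $L = \{v\}$, so $\alpha = 1$, $\beta = \deg_G(v) \geq 5$, and $\lambda = 0$. Since the previous claims force $p(G) = q(G) = 0$, the observation reduces to requiring $5b - a \geq c \cdot p(G-v) + d \cdot q(G-v)$. In the clean scenario $p(G-v) = q(G-v) = 0$, this matches constraint~\eqref{seq:our-deg-5} exactly and the contradiction is immediate.

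The rest of the argument must rule out the scenarios $p(G-v) \geq 1$ or $q(G-v) \geq 1$. By Claims~\ref{clm:exclue-Q-3-3} and~\ref{clm:exclude-T-6-2}, any $Q_3^{1-}$ subgraph (respectively $T_6^0$ component) of $G - v$ has at least three of its vertices adjacent to $v$; with Observation~\ref{ob:Q-3-dis} this forces $p(G-v) + q(G-v) \leq \lfloor \deg_G(v) / 3 \rfloor$, and triangle-freeness makes these three neighbors pairwise non-adjacent inside the subgraph. If $p(G-v) \geq 1$ I apply Observation~\ref{obs:meta-exclusion} with $L = H \cup \{v\}$, where $H$ is such a $Q_3^{1-}$ subgraph; the parameters become $\alpha = 9$, $\beta = 12 + \deg_G(v) + \deg_{G-v}(H) \leq 18$, while $\deg_G(L) \leq 3$ caps $p(G \setminus L) + q(G \setminus L) \leq 1$. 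Lemma~\ref{lm:cublike-forest} applied inside $G - v$ collects five vertices of $H$, and the pairwise non-adjacency of the three neighbors of $v$ inside $H$ lets one match them to the three excluded vertices (Case~0 of the lemma's proof), thereby also collecting $v$ for $\lambda = 6$; the resulting inequality is then implied by~\eqref{seq:Q-3-3-ex} together with~\eqref{seq:our-iso1}, \eqref{seq:our-triv}, and~\eqref{seq:our-triv2}.

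If instead $q(G-v) \geq 1$, let $K$ be a $T_6^0$ component of $G - v$. The cleanest move is $L = K$ alone: $\alpha = 6$, $\beta = 11$, and $\lambda = 4$ by Lemma~\ref{lm:T6-like-forest}. Any new $Q_3^{1-}$ subgraph or $T_6$ component of $G \setminus L$ would need at least three of its vertices adjacent to vertices of $K$ in $G$; since $K$'s only outside edges go to $v$ and $v$ has at most $\deg_G(v) - 3$ remaining incident edges in $G \setminus L$, when $\deg_G(v) = 5$ we get $p(G \setminus L) = q(G \setminus L) = 0$ and the derived constraint $4 - 6a + 11b \geq 0$ is implied by~\eqref{lpc:T-6-2} plus $b \geq 0$. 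The $\deg_G(v) \geq 6$ branches of both subcases can be absorbed by the same style of compound removal, using the extra slack from $\beta \geq 6$.

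The main obstacle I anticipate is the borderline configuration in the $Q_3^{1-}$ branch where $\deg_{G-v}(H) = 1$ and the unique degree-$4$ vertex of $H$ is not a neighbor of $v$: Lemma~\ref{lm:cublike-forest}'s requirement of three highest-degree vertices then forces that non-neighbor into the excluded set and leaves a neighbor of $v$ inside the collected five, potentially obstructing the addition of $v$ and dropping $\lambda$ to $5$. Handling this corner — either by a bespoke forest construction tailored to the single outside edge or by enlarging $L$ to absorb that edge's far endpoint — is where the bookkeeping becomes most delicate, after which the derived inequalities remain implied by the constraints already present in $\lp$.
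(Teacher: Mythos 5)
Your primary move ($L=\{v\}$, $\alpha=1$, $\beta=5$, $\lambda=0$, yielding constraint $5b-a\geq 0$) is exactly the paper's, but you have a genuine gap in ruling out $p(G-v)\geq 1$ and $q(G-v)\geq 1$.

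The paper dispatches both possibilities with a short planarity argument you miss: if $H$ is a $Q_3^{1-}$ subgraph of $G'=G-v$, then $v$ lies in a single face of the induced embedding of $H$; since $Q_3$ is $3$-connected its faces are all $4$-cycles, and triangle-freeness of $G$ forbids $v$ from being adjacent to two consecutive vertices of that $4$-cycle, so $v$ has at most two neighbours in $H$, giving $\deg_G(H)\leq 3$ and contradicting Claim~\ref{clm:exclue-Q-3-3}. The same reasoning (all faces of $T_6$ are $4$-cycles by Euler's formula and girth $\geq 4$) gives $\deg_G(K)\leq 2$ for a $T_6$ component $K$, contradicting Claim~\ref{clm:exclude-T-6-2}. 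Thus $p(G')=q(G')=0$ outright, and no compound removal is needed.

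Your fallback via $L=H\cup\{v\}$ cannot be made to work. You assert $\lambda=6$ by adding $v$ to the five vertices of $H$ chosen in Lemma~\ref{lm:cublike-forest}, but the definition of ``collect'' requires that the chosen set extend \emph{every} induced forest of $G\setminus L$. Since $v$ has at least $\deg_G(v)-3\geq 2$ neighbours outside $L$, two of them may lie in the same component of the ambient forest, and then adding $v$ creates a cycle — so $\lambda=6$ fails. With only $\lambda=5$, the derived constraint $5-9a+17b-c-d\geq 0$ is violated at the paper's optimal solution $a=25/27$, $b=c=5/27$, $d=2/27$ (the left side evaluates to $-12/27$), so the inequality cannot be appended to $\lp$. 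The ``borderline configuration'' you flag is not a bookkeeping corner case; it is the symptom of the missing planarity argument, without which the compound-removal route has no LP slack.
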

\begin{proof}
Let $v$ be a $5^+$ vertex in $G$ and $G' = G-\{v\}$. Suppose that $G'$ has a $Q^{1-}_3$ subgraph $H$. By planarity, $v$ must be embedded in one face of $H$. Since faces of $H$ has length $4$ and $G$ is triangle-free, $v$ has at most two neighbors in $H$. That implies $H$ is a $Q_3^{3-}$ subgraph of $G$, contradicting Claim~\ref{clm:exclue-Q-3-3}. Thus $p(G') = 0$.  Suppose that $G'$ has a $T_6$ component $K$. By planarity, $v$ must be embedded in one face of $K$. Since $G$ is triangle-free, $v$ has at most two neighbors in $K$. That implies $K$ is $T_6^{2-}$, contradicting Claim~\ref{clm:exclude-T-6-2}. Thus $q(G') = 0$.  By Observation~\ref{obs:meta-exclusion} with $L = v$ and $(\alpha,\beta,\gamma,\eta,\lambda) = (1,5,0,0,0)$, $5b-a$ must be negative. Thus, we obtain contradiction by adding Inequality~\eqref{lpc:deg-5} to $\lp$.
\begin{equation}\label{lpc:deg-5}
5b-a \geq 0
\end{equation}
\end{proof}
\begin{lemma}\label{lm:Q-3-4-forest}
If $H$ is $Q_3^{5-}$ subgraph of $G$ and every vertex of $H$ has degree at most 4 in $G$, then any forest $F$ in $G\setminus H$ can be extended to a forest of $G$ of order $|F| + 5$.
\end{lemma}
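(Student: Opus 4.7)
Since every vertex of $H$ has degree $3$ inside the $3$-regular graph $Q_3$ and degree at most $4$ in $G$, each $H$-vertex has at most one external edge, so the set $B$ of between vertices satisfies $|B|=\deg_G(H)\leq 5$. If $|B|\leq 3$, Lemma~\ref{lm:cublike-forest} applies directly, so I concentrate on $|B|\in\{4,5\}$.

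The key use of planarity is that in the induced planar embedding of $G$, every connected component of $F$ lies entirely inside a single face of $H$, since no edge of $F$ can cross an edge of $H$. Consequently, for each component $T$ of $F$, the set $B_T := \{v \in B : v\text{'s external neighbor lies in } T\}$ is contained in the vertex set of some face of $Q_3$, so the vertices of $B_T$ are pairwise at $Q_3$-distance at most $2$; equivalently, no antipodal pair of $Q_3$ (the only pairs at distance $3$) is contained in any $B_T$. I will choose $X\subseteq V(H)$ of size $5$ satisfying (a) $Q_3[X]$ is a forest, (b) $|X\cap B|\leq 3$, and (c) any two vertices of $X\cap B$ lying in the same component of $Q_3[X]$ form an antipodal pair of $Q_3$.

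Properties (a)--(c) together imply that $G[V(F)\cup X]$ is a forest of order $|F|+5$: a cycle in this subgraph must use an even number $k$ of external edges, and since each $X$-vertex has at most one external edge, $k\leq|X\cap B|\leq 3$, forcing $k\in\{0,2\}$. The case $k=0$ is excluded by~(a) and $F$ being a forest, while $k=2$ would put two vertices of $X\cap B$ in the same component of $Q_3[X]$ with external neighbors in a common component of $F$, forcing them to be cofacial in $Q_3$ and contradicting~(c). The construction of $X$ is then a case analysis on $|B|$ and on the orbit of $B$ under $\mathrm{Aut}(Q_3)$. For $|B|=4$ there is slack---$|V(H)\setminus B|=4$---and typically $X=(V(H)\setminus B)\cup\{v\}$ for a carefully chosen $v\in B$ yields $|X\cap B|=1$, making (b) and (c) trivial. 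The main obstacle is $|B|=5$, where $|X\cap B|\geq 2$ and (c) has real content. A pigeonhole argument over the four antipodal pairs of $Q_3$ (which partition $V(Q_3)$) and the complement $V(H)\setminus B$ (of size $3$) shows that $B$ always contains some antipodal pair $\{u,v\}$; whenever $Q_3[(V(H)\setminus B)\cup\{u,v\}]$ happens to be a forest, taking $X=(V(H)\setminus B)\cup\{u,v\}$ gives~(a)--(c) immediately. The residual subcases in which this $X$ closes a $4$-face of $Q_3$ are handled by instead choosing two between vertices of $X$ that land in different components of $Q_3[X]$, making (c) vacuous; this finite case-by-case verification is the technical heart of the argument.
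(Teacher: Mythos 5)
Your framework is sound and is a genuinely different organization from the paper's. The key planarity observation---each component $T$ of $F$ lies inside one face of $H$ in the induced embedding, so $B_T$ is cofacial in $Q_3$ and hence contains no antipodal pair---is correct, as is the parity and counting argument forcing any potential cycle to use $k\in\{0,2\}$ external edges, with (c) ruling out $k=2$. The paper instead fixes an embedding in which the inner-most $4$-face of $H$ has the most $3$-vertices and then names the five collected vertices directly in three short cases; your version abstracts a clean sufficient criterion (a)--(c) on $X$ and reduces the lemma to a finite combinatorial search for $X$.

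The gap is that the search is not carried out, and the residual subcases you wave at are not empty. For instance, take $\deg_G(H)=5$ with $V(H)\setminus B=\{u_1,u_2,u_3\}$, three vertices of the face $u_1u_2u_3u_4$. Then $B=\{u_4,u_5,u_6,u_7,u_8\}$ contains exactly one antipodal pair, namely $\{u_4,u_6\}$, so your recipe forces $X=\{u_1,u_2,u_3,u_4,u_6\}$, which contains the $4$-cycle $u_1u_2u_3u_4$ and violates (a), with no alternative antipodal pair inside $B$ to fall back on. (A correct choice here is $X=\{u_1,u_2,u_3,u_6,u_8\}$, exactly the paper's Case~1 set: the between vertices $u_6,u_8$ lie in different components of $Q_3[X]$, making (c) vacuous.) Similarly, when $\deg_G(H)=4$ and $V(H)\setminus B$ is itself a face, $X=(V(H)\setminus B)\cup\{v\}$ contains a $4$-cycle regardless of $v$, so ``carefully chosen $v$'' is not available and a different five-set is needed. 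These are finitely many configurations and one can check that a good $X$ always exists, but your write-up explicitly defers the ``technical heart of the argument,'' so as submitted the proof is not complete.
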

\begin{proof}
By Lemma~\ref{lm:cublike-forest}, we can assume that $H$ is $Q_3^{4}$ or $Q_3^{5}$.  By symmetry of $Q_3$, we can choose an embedding of $G$ such that the inner-most face $u_1u_2u_3u_4$, denoted by $f$, of $H$ has the most number of 3-vertices. We have three cases:

\begin{description}
\item[Case 1] Face $f$ has at least three 3-vertices, say $u_1,u_2,u_3$, then $F\cup\{u_1,u_2,u_3,u_6,u_8\}$ is an induced forest of $G$. 
\item[Case 2] Face  $f$ has only one 3-vertex, say $u_1$, then every face that contains $u_1$ on the boundary must has at least three 4-vertices by the choice of the inner-most face of $H$. That implies $\deg_G(H)\geq 6$; a contradiction.
\item[Case 3] Face $f$ has exactly two 3-vertices. By the choice of $f$, every face of $H$ has at most two 3-vertices. Suppose that $H$ has two adjacent 3-vertices. By symmetry of $Q_3$, we can choose an embedding of $G$ such that two 3-vertices of $f$ are adjacent. We can assume~w.l.o.g they are $u_1$ and $u_2$. Thus,  $u_5$ and $u_6$ must be 4-vertices. Since $\deg_G(H)\leq 5$, at most one vertex in $\{u_7,u_8\}$ is a 4-vertex. Let $u^*$ be a 3-vertex in $\{u_7,u_8\}$. Since non-$H$ edges of $u_6$ and $u_4$ are embedded in different faces of $G$, $F\cup \{u_6,u_1,u_{2},u_{4}, u^*\}$ is an induced forest of $G$. If $H$ has no two adjacent 3-vertices, we can assume~w.l.o.g that $u_1$ and $u_3$ are two 3-vertices of $H$. Thus, $u_2,u_4,u_5,u_7$ are 4-vertices. Since $\deg_G(H) \leq 5$, at least one vertex in $\{u_6,u_8\}$ is a 3-vertex. We define $u^*$ to be $u_2$ if $u_8$ is a 4-vertex and $u^* = u_4$ if $u_6$ is a 4-vertex. Then, $F\cup\{u_1,u_3,u_6,u_8,u^*\}$ is an induced-forest in $G$. \qedhere
\end{description} 
\end{proof}

\begin{claim} \label{clm:exclue-Q-3-4}
Graph $G$ excludes $Q_3^{4-}$ as a subgraph.
\end{claim}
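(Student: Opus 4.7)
The plan is as follows. Since $Q_3^{3-}$ is already excluded by Claim~\ref{clm:exclue-Q-3-3}, the only remaining case is a $Q_3^4$ subgraph $H$. I will remove $H$ and invoke Observation~\ref{obs:meta-exclusion}. Because $\Delta(G)\leq 4$ by Claim~\ref{clm:no-deg-5}, every vertex of $H$ has degree at most $4$ in $G$, so Lemma~\ref{lm:Q-3-4-forest} lets me collect $\lambda=5$ vertices from $H$. Deleting $H$ removes $\alpha=8$ vertices and $\beta=12+4=16$ edges.

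The heart of the argument is bounding $p(G\setminus H)$ and $q(G\setminus H)$. A $Q_3$ subgraph of $G$ that shares a vertex with $H$ is destroyed by the removal of $H$, so any $Q_3^{1-}$ subgraph in $G\setminus H$ must come from a $Q_3$ subgraph of $G$ that is vertex-disjoint from $H$; by Claim~\ref{clm:exclue-Q-3-3} any such subgraph has degree at least $4$ in $G$, and hence must lose at least three cut edges to $H$ in order to become $Q_3^{1-}$ in $G\setminus H$. Similarly, any new $T_6$ component in $G\setminus H$ arises from a $T_6$ subgraph of $G$ of degree at least $3$ (by Claim~\ref{clm:exclude-T-6-2}), and all of those cut edges must run to $H$. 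Since $\deg_G(H)=4$, the cut of $H$ can finance at most one such bad subgraph overall, so $(\gamma,\eta)$ lies in $\{(0,0),(-1,0),(0,-1)\}$.

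Feeding each of these triples into Observation~\ref{obs:meta-exclusion} with $(\alpha,\beta,\lambda)=(8,16,5)$ produces the three candidate inequalities
\[
5-8a+16b \geq 0, \qquad 5-8a+16b-c \geq 0, \qquad 5-8a+16b-d \geq 0,
\]
each of which is a consequence of constraint~\eqref{seq:Q-3-3-ex}, namely $5-8a+15b-c-d \geq 0$, together with the nonnegativity of $b$, $c$, and $d$. Thus no fresh constraint needs to be appended to $\lp$: in every case Observation~\ref{obs:meta-exclusion} already contradicts the minimality of $G$, and therefore $G$ contains no $Q_3^4$ subgraph.

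The main obstacle I expect is the cut-edge bookkeeping in the middle paragraph. The fact that each new bad subgraph requires three of the four cut edges of $H$ is what simultaneously rules out two new $Q_3^{1-}$ subgraphs and the coexistence of a new $Q_3^{1-}$ with a new $T_6$ component; pinning these mutual-exclusion arguments down carefully, and ruling out vertex-sharing sources of bad subgraphs via Observation~\ref{ob:Q-3-dis} and Claim~\ref{clm:exclue-Q-3-3}, is the delicate part. Once those are in hand, the reduction to Linear Program~\ref{eq:our-linear} is immediate.
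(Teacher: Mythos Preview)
Your proposal is correct and follows essentially the same approach as the paper: reduce to $Q_3^4$, collect five vertices via Lemma~\ref{lm:Q-3-4-forest}, and apply Observation~\ref{obs:meta-exclusion} with $(\alpha,\beta,\lambda)=(8,16,5)$. The one difference is in the bookkeeping for $p(G\setminus H)$ and $q(G\setminus H)$: the paper simply takes $(\gamma,\eta)=(-1,-1)$ (using Claims~\ref{clm:exclue-Q-3-3} and~\ref{clm:exclude-T-6-2} to bound each separately by $1$) and records the resulting inequality $5-8a+16b-c-d\geq 0$ as a new constraint in $\lp$, whereas you push a bit further, observe that the four cut edges of $H$ cannot simultaneously supply the $\geq 3$ edges needed for a new $Q_3^{1-}$ and the $\geq 3$ edges needed for a new $T_6$ component, and conclude that $(\gamma,\eta)\in\{(0,0),(-1,0),(0,-1)\}$. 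Your sharper analysis lets you avoid introducing a new constraint, since each of your three inequalities is dominated by \eqref{seq:Q-3-3-ex} plus nonnegativity of $b,c,d$; the paper's looser bound produces a constraint that it later shows to be redundant anyway (it is $(\ref{seq:Q-3-3-ex})+b$). Both routes are valid and land in the same linear program.
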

\begin{proof}

By Claim~\ref{clm:exclue-Q-3-3}, we only need to exclude $Q_3^{4}$ from $G$. Let $H$ be a $Q_3^{4}$ subgraph of $G$. By Claim~\ref{clm:no-deg-5}, between vertices of $H$  are $4$-vertices.  By Claim~\ref{clm:exclue-Q-3-3}, $G\setminus H$ has at most one $Q^{1-}_3$ subgraph. By Claim~\ref{clm:exclude-T-6-2}, $G\setminus H$ has at most one $T_6$ component. By Lemma~\ref{lm:Q-3-4-forest}, we can collect 5 vertices from $H$. By Observation~\ref{obs:meta-exclusion} with $L = H$ and $(\alpha,\beta,\gamma,\eta,\lambda) = (8,16,-1,-1,5)$, $5 - 8a + 16b - c - d$ must be negative.  Thus, we obtain contradiction by adding Inequality~\eqref{lpc:Q-3-4} to $\lp$.
\begin{equation} \label{lpc:Q-3-4}
5 - 8a + 16b - c - d  \geq 0
\end{equation}   
\end{proof}

\begin{claim}\label{clm:exclude-T-6-3}
Graph $G$ excludes $T_6^{3^-}$ as a subgraph. 
\end{claim}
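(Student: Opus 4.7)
The plan is to mirror the proofs of Claim~\ref{clm:exclude-T-6-2} and Claim~\ref{clm:exclue-Q-3-4}, stepping up the degree bound from $2$ to $3$. Since Claim~\ref{clm:exclude-T-6-2} already excludes $T_6^{2-}$, the only remaining case is $T_6^{3}$, so I would fix a $T_6^{3}$ subgraph $K$ of $G$ and analyze $G\setminus K$.

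The first step is to rule out any $Q_3^{1-}$ subgraph in $G\setminus K$: such a subgraph $M$, viewed inside $G$, would satisfy $\deg_G(M)\le \deg_{G\setminus K}(M)+|E(M,V(K))|\le 1+\deg_G(K)\le 4$, contradicting Claim~\ref{clm:exclue-Q-3-4}. The second step is to show that $G\setminus K$ contains at most one $T_6$ component: a $T_6$ component $C$ of $G\setminus K$ is either a $T_6$ component of $G$ (forbidden by Claim~\ref{clm:exclue-T-6}) or is joined to $K$ in $G$, in which case Claim~\ref{clm:exclude-T-6-2} forces $\deg_G(C)\ge 3$. Since $\deg_G(K)\le 3$, at most one such $C$ can fit.

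With these two facts, $p(G\setminus K)=0$ and $q(G\setminus K)\le 1$, so Observation~\ref{obs:meta-exclusion} applies with $(\alpha,\beta,\gamma,\eta)=(6,\,11,\,0,\,-1)$; here $\beta=8+3=11$ counts the $8$ edges internal to $T_6$ plus the $3$ edges of the cut. Lemma~\ref{lm:T6-like-forest} lets me collect $\lambda=4$ vertices from $K$, so that $4-6a+11b-d$ must be negative; adding the constraint $4-6a+11b-d\ge 0$ to $\lp$ yields the desired contradiction. I note that this new constraint is already implied by the earlier inequality~\eqref{seq:T-6-1-ex} (the $t=0$ case of Claim~\ref{clm:exclue-T-6-1}) together with $b\ge 0$, which is presumably why it will not appear as a separate line in the final Linear Program~\ref{eq:our-linear}.

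I do not foresee a genuine obstacle here; the exclusion cascade built up by the previous claims leaves essentially no structural freedom for $G\setminus K$. The only real hazards are bookkeeping ones: getting $\beta=11$ right and tracking the sign convention in Observation~\ref{obs:meta-exclusion} when $\eta$ is negative.
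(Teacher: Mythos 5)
Your proof is correct and follows essentially the same route as the paper: reduce to the $T_6^3$ case via Claim~\ref{clm:exclude-T-6-2}, deduce $p(G\setminus K)=0$ from Claim~\ref{clm:exclue-Q-3-4} and $q(G\setminus K)\le 1$ from Claim~\ref{clm:exclude-T-6-2}, then apply Observation~\ref{obs:meta-exclusion} with $(\alpha,\beta,\gamma,\eta,\lambda)=(6,11,0,-1,4)$ to obtain $4-6a+11b-d\ge 0$. Your observation that this constraint is redundant, being $(\text{\ref{seq:T-6-1-ex}})+2(\text{\ref{seq:our-triv}})$, matches the reduction the paper makes at the end of Section~\ref{sec:proof-sub-main}.
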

\begin{proof}
By Claim~\ref{clm:exclude-T-6-2}, we only need to exclude $T_6^{3}$ from $G$. Let $H$ be a $T_6^3$ subgraph of $G$. By Claim~\ref{clm:exclue-Q-3-4}, $G\setminus H$ has no $Q_3$-like subgraph and by Claim~\ref{clm:exclude-T-6-2},  $G\setminus H$ has at most one $T_6$ component. By Lemma~\ref{lm:T6-like-forest}, we can collect $4$ vertices from $H$. By Observation~\ref{obs:meta-exclusion} with $L = H$ and $(\alpha,\beta,\gamma,\eta,\lambda) = (6,11,0,-1,4)$, $4 - 6a + 11b - d$ must be negative.  Thus, we obtain contradiction by adding Inequality~\eqref{lpc:T-6-3} to $\lp$.
\begin{equation} \label{lpc:T-6-3}
4 - 6a + 11b - d  \geq 0
\end{equation}   
\end{proof}

\begin{claim}\label{clm:exclude-T-6-4-5}
Graph $G$ excludes any $T_6^{5-}$ subgraph that has all between vertices on the same face.
\end{claim}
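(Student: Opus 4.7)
The plan is to apply Observation~\ref{obs:meta-exclusion} to derive two additional inequalities constraining $\lp$. By Claim~\ref{clm:exclude-T-6-3} we may assume $d := \deg_G(K) \in \{4, 5\}$, and up to the symmetry of $T_6$ the common face can be taken to be $f = v_1 v_2 v_5 v_3$, so that $v_4, v_6$ have no non-$K$ edges. By Claim~\ref{clm:no-deg-5}, $v_5$ (of $T_6$-degree $2$) has at most two external edges and each of $v_1, v_2, v_3$ (of $T_6$-degree $3$) has at most one; in particular $v_5$ always carries at least one external edge, and there is a between vertex $v^\star$ of maximum external degree $k^\star \in \{1,2\}$.

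I would take $L := V(K) \setminus \{v^\star\}$, so that $\alpha = 5$. Every edge of $T_6$ has at least one endpoint in $L$ and is therefore removed, and at least $d - k^\star \ge 2$ of the cut edges of $K$ also emanate from $L$; hence $\beta \ge 10$ uniformly across all sub-configurations. The central step is to exhibit, in each sub-configuration, a three-vertex collection $S \subseteq L$ that extends every induced forest $F$ of $G \setminus L$ to an induced forest of $G$. When $v^\star = v_5$, the set $S = \{v_1, v_4, v_6\}$ works: $S$ is a path in $K$ (edges $v_1 v_6, v_4 v_6$), and the only attachment from $S$ to $V(G)\setminus V(L)$ is the at most one non-$K$ edge at $v_1$, so no cycle can be closed because $v_4, v_6$ have no further neighbors in $V(G)\setminus V(L)$. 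When $v^\star \in \{v_1, v_2, v_3\}$, a symmetric three-vertex collection (such as $\{v_5, v_4, v_6\}$ with appropriate care) plays the same role; this gives $\lambda = 3$.

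Finally, a planarity argument bounds $\gamma = p(G) - p(G\setminus L) \ge -1$ and $\eta = q(G) - q(G\setminus L) \ge -1$ — using $p(G) = q(G) = 0$ by Claims~\ref{clm:exclue-Q-3-1} and~\ref{clm:exclude-T-6-3} — and shows that they cannot simultaneously be $-1$: creating a new $Q_3^{1-}$ subgraph in $G\setminus L$ requires at least four cut edges from $L$ into a single $Q_3^{5+}$ subgraph by Claim~\ref{clm:exclue-Q-3-4}, and creating a new $T_6$ component requires at least four cut edges from $L$ into a single $T_6^{4+}$ subgraph by Claim~\ref{clm:exclude-T-6-3}, so these two events together would exceed the cut budget of $L$. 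Splitting into the two remaining sub-cases $(\gamma, \eta) = (-1, 0)$ and $(0, -1)$ and invoking Observation~\ref{obs:meta-exclusion} yields the two constraints $3 - 5a + 10b - c \ge 0$ and $3 - 5a + 10b - d \ge 0$, namely~\eqref{seq:T-3-45-ex} and~\eqref{seq:T-3-45-ex1}, producing the required contradiction.

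The main obstacle will be the collection step when $v^\star \in \{v_1, v_2, v_3\}$ and $F$ happens to include $v^\star$ together with both external neighbors of $v_5$: one must argue carefully, using planarity and the specific embedding, that a three-vertex collection can always be chosen so that its at most single external edge to $F$ does not close a cycle through $F$. I expect this reduces to the same local planar case analysis used in Lemma~\ref{lm:T6-like-forest}.
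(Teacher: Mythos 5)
Your quantitative parameters match the paper exactly ($\alpha=5$, $\beta\ge 10$, $\lambda=3$, the same two target constraints, and a budget argument for $\gamma,\eta$ essentially identical to the paper's case analysis), but the vertex you leave behind is not the same as the paper's, and this is where the argument breaks. The paper always deletes $L = V(T_6)\setminus\{v_5\}$, i.e.\ always leaves behind the $T_6$-degree-$2$ vertex that lies on the common face $C=v_1v_2v_5v_3$. This works because $v_5$'s only $T_6$-neighbors are $v_2,v_3$, which are disjoint from the collected set $\{v_1,v_4,v_6\}$, while $v_4,v_6$ are not between vertices of the $T_6$ and are not $T_6$-adjacent to $v_5$; hence the path $v_1\text{--}v_4\text{--}v_6$ has at most one edge leaving $L$, namely the single external edge of $v_1$.

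Your rule (leave behind a between vertex of maximum external degree) admits $v^\star\in\{v_2,v_3\}$, and in that case there is \emph{no} valid $3$-vertex collection. Say $v^\star=v_2$: then $v_2$ has $T_6$-degree $3$ with neighbors $v_1,v_5,v_6$, so only $\{v_3,v_4\}\subseteq L$ avoid $v_2$; every $3$-subset $S\subseteq L=\{v_1,v_3,v_4,v_5,v_6\}$ therefore contains a $T_6$-neighbor of $v_2$, and one checks all $\binom{5}{3}=10$ triples fail. For instance, your $\{v_4,v_5,v_6\}$ fails because $v_5$ has two edges leaving $L$ ($v_5v_2$ plus its one external edge, since $s_5\ge 1$); $\{v_1,v_4,v_6\}$ closes the $4$-cycle $v_1v_4v_6v_2$ if $v_2\in F$; $\{v_3,v_4,v_6\}$ closes a cycle through $v_3$'s external neighbor, $F$, $v_2$, and $v_6$; and so on. The concern you raise at the end is misdirected: if $v^\star\ne v_5$ then $s_5\le k^\star\le 1$, so $v_5$ has at most one external neighbor, and that scenario never arises; the real obstruction is $v^\star\in\{v_2,v_3\}$. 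Note that your own observation that $s_5\ge 1$ while $s_1,s_2,s_3\le 1$ already shows $v_5$ always attains the maximum external degree, so you could simply fix $v^\star:=v_5$ in all cases and thereby recover the paper's argument verbatim; as written, though, your proposal entertains cases for which the collection step cannot be completed.
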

\begin{proof}
Suppose that $G$ contains a  $T_6^{5-}$ subgraph $H$ as in the claim. By Claim~\ref{clm:exclude-T-6-3}, $\deg_G(H) \geq 4$. By symmetry of $H$, we can assume~w.l.o.g that the outer face $v_1v_2v_5v_3$ of $H$ contains all between vertices. Let $K$ be the subgraph of $G$ induced by $\{v_1,v_2,v_3,v_4,v_6\}$. By Claim~\ref{clm:no-deg-5}, $v_1$ has at most one non-$H$ incident edge. Thus, we can collect $\{v_1, v_4,v_6\}$ from $K$. Since $\deg_G(K) \leq 5$, by Claim~\ref{clm:exclue-Q-3-4}, $G\setminus K$ has at most one $Q^{1-}_3$ subgraph. If $G\setminus K$ has exactly one $Q^{1-}_3$ subgraph, the $Q^{1-}_3$ subgraph in $G\setminus K$ must has three edges to $K$ in $G$. That implies $G\setminus K$ has no $T_6$ component, by Claim~\ref{clm:exclude-T-6-3}. Since $\deg_G(H)$ is at least $4$, $m(G) - m(G\setminus K) \geq 10$. By Observation~\ref{obs:meta-exclusion} with $L = K$ and $(\alpha,\beta,\gamma,\eta,\lambda) = (5,10,0,-1,3)$, $3 - 5a + 10b  - c $ must be negative.  Thus, we obtain contradiction by adding Inequality~\eqref{lpc:T-6-4-5a} to $\lp$.
\begin{equation} \label{lpc:T-6-4-5a}
3 - 5a + 10b  - c  \geq 0
\end{equation}

If $G\setminus K$  has no $Q^{1-}_3$ subgraph, by Claim~\ref{clm:exclude-T-6-3}, $G\setminus K$ has at most one $T_6$ component. By Observation~\ref{obs:meta-exclusion} with $L = K$ and $(\alpha,\beta,\gamma,\eta,\lambda) = (5,10,-1,0,3)$, $3 - 5a + 10b  - d $ must be negative.  Thus, we obtain contradiction by adding Inequality~\eqref{lpc:T-6-4-5b} to $\lp$.
\begin{equation} \label{lpc:T-6-4-5b}
3 - 5a + 10b  - d  \geq 0
\end{equation}

\end{proof}

\begin{claim} \label{clm:exclue-Q-3-5}
Graph $G$ excludes $Q_3^{5^-}$ as a subgraph.
\end{claim}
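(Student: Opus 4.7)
The plan is to follow the template of the preceding claims. By Claim~\ref{clm:exclue-Q-3-4}, the only $Q_3^{5-}$ configuration not yet excluded is a $Q_3^5$ subgraph, so let $H$ be a $Q_3^5$ subgraph of $G$. Since Claim~\ref{clm:no-deg-5} already rules out any $5^+$-vertex in $G$, every vertex of $H$ has degree at most $4$ in $G$, so the hypothesis of Lemma~\ref{lm:Q-3-4-forest} is met. Applying that lemma, we can collect $\lambda = 5$ vertices from $H$ when extending any induced forest of $G\setminus H$. The removal parameters are $\alpha = 8$ (the vertices of $Q_3$) and $\beta = 12 + 5 = 17$ (the internal edges of $Q_3$ plus the five boundary edges of the cut).

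The remaining task is to bound $\gamma = p(G) - p(G\setminus H)$ and $\eta = q(G) - q(G\setminus H)$. Recall that after Claim~\ref{clm:exclue-Q-3-1} we have $p(G) = q(G) = 0$, so both $\gamma$ and $\eta$ are non-positive. Any $Q_3^{1-}$ subgraph $M$ that appears in $G\setminus H$ must have been a $Q_3^{5+}$ subgraph of $G$ by Claim~\ref{clm:exclue-Q-3-4}, so at least $5 - 1 = 4$ edges of the cut $(V(M), V(G)\setminus V(M))$ must enter $V(H)$. Likewise, any $T_6$ component of $G\setminus H$ was a $T_6^{4+}$ subgraph in $G$ by Claim~\ref{clm:exclude-T-6-3}, contributing at least $4$ edges to the cut at $H$. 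Because $\deg_G(H) = 5$, two such objects cannot coexist in the cut budget; in particular $p(G\setminus H) + q(G\setminus H) \leq 1$, and neither $p(G\setminus H) \geq 2$ nor $q(G\setminus H) \geq 2$ is possible.

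Thus one of two cases occurs. In Case A, $p(G\setminus H) = 1$ and $q(G\setminus H) = 0$, giving $(\gamma,\eta) = (-1, 0)$; Observation~\ref{obs:meta-exclusion} yields the constraint $5 - 8a + 17b - c \geq 0$, which we add to $\lp$. In Case B, $p(G\setminus H) = 0$ and $q(G\setminus H) \leq 1$, giving $(\gamma,\eta) \in \{(0,0),(0,-1)\}$; the tight subcase yields $5 - 8a + 17b - d \geq 0$, which we also add to $\lp$. Both inequalities are redundant given \eqref{seq:Q-3-3-ex} together with $b,c,d \geq 0$ (since, e.g., $5 - 8a + 17b - c = (5 - 8a + 15b - c - d) + 2b + d$), so they need not appear explicitly in the final Linear Program~\ref{eq:our-linear}.

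The main obstacle is the combinatorial accounting in the second paragraph: translating the degree lower bounds on any latent $Q_3^{1-}$ subgraph or $T_6$ component of $G\setminus H$, supplied by Claims~\ref{clm:exclue-Q-3-4} and~\ref{clm:exclude-T-6-3}, into a ceiling on $p(G\setminus H) + q(G\setminus H)$ using only the five-edge cut of $H$. Once this accounting is done, the application of Observation~\ref{obs:meta-exclusion} is routine and matches the scheme of Claims~\ref{clm:exclue-Q-3-2}--\ref{clm:exclue-Q-3-4}.
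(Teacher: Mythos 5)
Your proof is correct and follows essentially the same approach as the paper: reduce to $Q_3^5$ via Claim~\ref{clm:exclue-Q-3-4}, use Claim~\ref{clm:no-deg-5} to satisfy the hypothesis of Lemma~\ref{lm:Q-3-4-forest} and collect $\lambda=5$ vertices, and feed $(\alpha,\beta)=(8,17)$ into Observation~\ref{obs:meta-exclusion}. The paper is slightly coarser about $(\gamma,\eta)$, taking $(-1,-1)$ directly (each of $p(G\setminus H)$, $q(G\setminus H)$ is at most $1$ by Claims~\ref{clm:exclue-Q-3-4} and~\ref{clm:exclude-T-6-3}) and recording the single constraint $5-8a+17b-c-d\geq 0$, which is already implied by \eqref{seq:Q-3-3-ex} and $b\geq 0$; your finer cut-budget argument that $p(G\setminus H)+q(G\setminus H)\leq 1$ is sound, but it only yields two constraints that are themselves redundant, so nothing changes in the final linear program.
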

\begin{proof}
By Claim~\ref{clm:exclue-Q-3-4}, we only need to exclude $Q_3^{5}$ from $G$. Let $H$ be a $Q_3^{5}$ subgraph of $G$. By Claim~\ref{clm:no-deg-5}, between vertices of $H$ has degree exactly $4$. By Claim~\ref{clm:exclue-Q-3-4}, $G\setminus H$ has at most one $Q^{1-}_3$ subgraph. By Claim~\ref{clm:exclude-T-6-3}, $G\setminus H$ has at most one $T_6$ component. By Lemma~\ref{lm:Q-3-4-forest}, we can collect 5 vertices from $H$. By Observation~\ref{obs:meta-exclusion} with $L = K$ and $(\alpha,\beta,\gamma,\eta,\lambda) = (8,17,-1,-1,5)$, $5 - 8a + 17b  - c -d  $ must be negative.  Thus, we obtain contradiction by adding Inequality~\eqref{lpc:Q-3-5} to $\lp$.
\begin{equation} \label{lpc:Q-3-5}
5 - 8a + 17b  - c -d   \geq 0
\end{equation}
\end{proof}

\begin{claim} \label{clm:meta-cub-excl}
If $H$ is a connected subgraph of $G$, then $G\setminus H$ has no $Q^{1-}_3$ subgraph and $T_6$ component.
\end{claim}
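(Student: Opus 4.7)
The plan is to derive both conclusions from the same planarity observation: since $H$ is connected and vertex-disjoint from any hypothetical obstacle $L$ in $G\setminus H$, the image of $H$ in the fixed planar embedding of $G$ lies entirely inside a single face of $L$, so every edge of $G$ from $L$ to $H$ is incident to a vertex on the boundary of that face. Combined with the fact that every face of $Q_3$ and of $T_6$ is a $4$-cycle, and with $\Delta(G)\leq 4$ from Claim~\ref{clm:no-deg-5}, this will give sharp upper bounds on $\deg_G(L)$ that directly contradict the earlier exclusion claims. Note that Observation~\ref{obs:meta-exclusion} is not needed here: the argument is purely structural.

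For the $Q_3^{1-}$ half, I would assume toward contradiction that $G\setminus H$ contains a $Q_3^{1-}$ subgraph $M$. Then $M$ has at most one cut edge in $G\setminus H$, and in $G$ the only additional cut edges go to $H$, landing on the four vertices of $M$ that bound the face containing $H$. Each such vertex has degree $3$ in $Q_3$ and degree at most $4$ in $G$, so it contributes at most one non-$M$ edge. Hence $\deg_G(M)\leq 1+4=5$, making $M$ a $Q_3^{5-}$ subgraph of $G$ and contradicting Claim~\ref{clm:exclue-Q-3-5}.

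For the $T_6$ half, suppose $G\setminus H$ contains a $T_6$ component $K$. Then no edge of $G\setminus H$ leaves $K$, so every cut edge of $K$ in $G$ goes to $H$ and lands on the boundary of a single face of $K$. A direct inspection of the (essentially unique) planar embedding of $T_6=K_{3,3}-v_4v_6$ shows that all four of its $4$-faces have the same vertex-type profile: three degree-$3$ vertices and one degree-$2$ vertex (one of $v_4$, $v_6$). Using $\Delta(G)\leq 4$ once more, the three degree-$3$ boundary vertices contribute at most one non-$K$ edge each and the degree-$2$ boundary vertex contributes at most two, giving $\deg_G(K)\leq 5$. Thus $K$ is a $T_6^{5-}$ subgraph of $G$ all of whose between vertices lie on a common face, contradicting Claim~\ref{clm:exclude-T-6-4-5}.

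The step I expect to require the most care is the face-structure analysis of $T_6$: I need to verify that in every planar embedding, each of the four $4$-faces contains exactly one degree-$2$ vertex, so that the uniform bound $\deg_G(K)\leq 5$ is available and matches the hypothesis of Claim~\ref{clm:exclude-T-6-4-5}. Once that is in hand, both halves reduce to a one-line degree count.
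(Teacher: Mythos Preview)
Your proposal is correct and follows essentially the same route as the paper: use connectedness of $H$ to confine it to one face of the obstacle, then combine the fact that every face boundary of $Q_3$ and $T_6$ is a $4$-cycle with $\Delta(G)\le 4$ (Claim~\ref{clm:no-deg-5}) to bound $\deg_G$ of the obstacle by $5$, contradicting Claims~\ref{clm:exclue-Q-3-5} and~\ref{clm:exclude-T-6-4-5} respectively. Your write-up is in fact more careful than the paper's in two places: you make the use of $\Delta(G)\le 4$ explicit in the $Q_3$ half (the paper just asserts ``at most $4$ edges'' from $H$ to $K$), and you spell out the face profile of $T_6$---three degree-$3$ vertices and one degree-$2$ vertex on each face---to justify the bound $3\cdot 1+1\cdot 2=5$, which the paper states without computation. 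One cosmetic point: in the paper's labeling the deleted edge of $K_{3,3}$ is $v_5v_6$ rather than $v_4v_6$, but this does not affect your argument.
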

\begin{proof}
Suppose that $G\setminus H$ contains a $Q^{1-}_3$ subgraph $K$. Since $H$ is connected, its vertices are embedded in on face of $K$, say the infinite face. Thus, $G$ has at most 4 edges connecting vertices of $H$ and vertices of $K$. Since $K$ has degree at most one in $G\setminus H$, $K$ has degree at most $5$ in $G$, contradicting Claim~\ref{clm:exclue-Q-3-5}. Suppose that $G\setminus H$ contains a $T_6$ component $M$. Since $H$ is connected, their vertices are embedded inside one face of $M$. Thus, there exists one face of $M$ contains all of its between vertices.  Since $G$ only has $4^{-}$-vertices, $\deg_G(M) \leq 5$, contradicting Claim~\ref{clm:exclude-T-6-4-5}.
\end{proof}

\subsection{Excluding low degree vertices} \label{subsec:ex-2-vertex}

As shown in Section~\ref{sec:ex-cub-T-6-like}, if $\mathcal{LP}$ is satisfied, $G$ has  $p(G) = 0$ and $q(G) = 0$. Thus, we only need to prove $\varphi(G) \geq an(G) -bm(G)$ to obtain contradiction.

\begin{claim} \label{clm:2-edge-conn}
$G$ is two-edge connected.
\end{claim}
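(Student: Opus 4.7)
The plan is to suppose for contradiction that $G$ is not two-edge connected, and in each of the two failure modes to construct an induced forest of $G$ whose size contradicts the counter-example assumption. Since the results of Section~\ref{sec:ex-cub-T-6-like} give $p(G) = q(G) = 0$, the inequality we must defeat has simplified to $\varphi(G) \geq a\,n(G) - b\,m(G)$. The disconnected case is immediate: if $G$ splits into connected components $G_1,\ldots,G_k$, each is a strictly smaller triangle-free planar graph, and because each $G_i$ is a whole component of $G$, every $Q_3^{1-}$ subgraph or $T_6$ component of $G_i$ lifts to one of the same type in $G$, forcing $p(G_i) = q(G_i) = 0$. Applying the minimality of $G$ to each $G_i$ and summing the resulting forests then yields $\varphi(G) \geq \sum_i \bigl(a\,n(G_i) - b\,m(G_i)\bigr) = a\,n(G) - b\,m(G)$, the desired contradiction.

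For the bridge case I would let $e = uv$ be a cut-edge of $G$ and let $G_1, G_2$ be the two connected components of $G - e$ with $u \in V(G_1)$ and $v \in V(G_2)$. Both are connected induced subgraphs of $G$ with $n(G_i) < n(G)$, so applying Claim~\ref{clm:meta-cub-excl} with $H = G_2$ (and symmetrically with $H = G_1$) gives $p(G_i) = q(G_i) = 0$. By the minimality of $G$ each $G_i$ then contains an induced forest $F_i$ with $|F_i| \geq a\,n(G_i) - b\,m(G_i)$. Because $e$ is the only edge of $G$ between $V(G_1)$ and $V(G_2)$, the induced subgraph of $G$ on $F_1 \cup F_2$ is just $F_1 \sqcup F_2$ together with possibly the single edge $e$, and adding one edge between two vertex-disjoint forests still yields a forest, so $F_1 \cup F_2$ is an induced forest in $G$. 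Consequently
\[
\varphi(G) \;\geq\; |F_1| + |F_2| \;\geq\; a\,n(G) - b\,(m(G) - 1) \;\geq\; a\,n(G) - b\,m(G),
\]
which again contradicts that $G$ is a counter-example.

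The only delicate point is justifying $p(G_i) = q(G_i) = 0$ in the bridge case: a $Q_3^{1-}$ subgraph of $G_1$ that meets $u$ would, in $G$, acquire the extra external edge $e$ and become a $Q_3^{2}$ subgraph, so the global hypothesis $p(G) = 0$ by itself does \emph{not} force $p(G_1) = 0$. This is precisely the situation Claim~\ref{clm:meta-cub-excl} is designed to eliminate, and applying it to the connected subgraph $G_2$ (which deletes a whole side of the bridge at once, rather than a single vertex) removes the obstacle cleanly; the rest of the argument is routine bookkeeping.
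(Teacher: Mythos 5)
Your proof is correct and follows essentially the same route as the paper: decompose $G$ along the missing connectivity (components or a bridge) into $G_1, G_2$, verify $p(G_i) = q(G_i) = 0$ using the exclusion results of Section~\ref{sec:ex-cub-T-6-like}, apply minimality of $G$ to each piece, and combine the resulting forests. The only cosmetic difference is that you invoke the umbrella Claim~\ref{clm:meta-cub-excl} where the paper cites Claims~\ref{clm:exclue-Q-3-5} and~\ref{clm:exclude-T-6-3} directly; both are valid since $G_1$ and $G_2$ are connected.
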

\begin{proof}
Suppose that the claim fails, then either $G$ is disconnected or $G$ is connected and has a bridge $e$. If $G$ is disconnected, let $G_1$ be any connected component of $G$ and $G_2 = G\setminus G_1$. If $G$ is connected and has a bridge $e$, let $G_1,G_2$ be two components of $G\setminus \{e\}$. Since $\deg_G(G_1) \leq 1$, by Claim~\ref{clm:exclue-Q-3-5}, $p(G_1) = p(G_2) = 0$. By Claim~\ref{clm:exclude-T-6-3}, $q(G_1) = q(G_2) = 0$. Since $G_1,G_2$ has strictly smaller order than $G$, they have two forests $F_1, F_2$ of order at least $a n(G_1) -b m(G_1), a n(G_2) -b m(G_2)$, respectively. Thus, $F_1 \cup F_1$ is an induced forest of $G$ of order at least:

\begin{equation*}
a (n(G_1) + n(G_2)) -b (m(G_1) + m(G_2)) \geq a n(G) - b m(G)
\end{equation*}
This contradicts that $G$ is a counter-example.
\end{proof}

\noindent A direct corollary of Claim~\ref{clm:2-edge-conn} is that $\delta(G) \geq 2$. 

\begin{claim}\label{clm:4-face-2-vertex}
If $v$ is a 2-vertex, then its neighbors must have another common neighbor.
\end{claim}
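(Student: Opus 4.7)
The plan is a contradiction argument via the minimality of $G$. Suppose the two neighbors $u_1, u_2$ of the 2-vertex $v$ share no common neighbor other than $v$. Form the auxiliary graph $G' := (G - v) + u_1u_2$ by deleting $v$ and adding the edge $u_1u_2$. Because $u_1, u_2$ have no other common neighbor, $G'$ is triangle-free; and since $v$ had degree $2$ in a planar embedding of $G$, the new edge can be drawn along the former path $u_1 v u_2$, keeping $G'$ planar. Then $n(G') = n(G) - 1$ and $m(G') = m(G) - 1$, so by minimality of $G$ there is an induced forest $F'$ of $G'$ of order at least $a n(G') - b m(G') - c\, p(G') - d\, q(G')$.

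The set $F := F' \cup \{v\}$ is an induced forest in $G$: the induced subgraph of $G$ on $F$ differs from that of $G'$ on $F'$ only by (possibly) replacing the edge $u_1u_2$ with the subdivision path $u_1 v u_2$, which preserves the forest property. Since the earlier claims of Section~\ref{sec:ex-cub-T-6-like} give $p(G) = q(G) = 0$, concluding $|F| \geq a n(G) - b m(G)$ contradicts that $G$ is a counter-example; this reduces to showing $1 - a + b - c\, p(G') - d\, q(G') \geq 0$.

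The main obstacle is bounding $p(G')$ and $q(G')$; I will argue $p(G') + q(G') \leq 1$. For any $Q_3^{1-}$ subgraph $H$ of $G'$, a case analysis on $|\{u_1, u_2\} \cap V(H)|$ shows that unless both $u_1, u_2 \in V(H)$, the graph $H$ is also an induced $Q_3$ subgraph of $G$ with $\deg_G(H) = \deg_{G'}(H) \leq 1$ (the contributions of the added edge $u_1u_2$ to the cut and of the deleted edges to $v$ cancel), contradicting Claim~\ref{clm:exclue-Q-3-1}; hence $\{u_1,u_2\} \subseteq V(H)$ and $u_1u_2$ belongs to the cube structure of $H$. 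By Observation~\ref{ob:Q-3-dis}, at most one such $H$ contains $\{u_1,u_2\}$, so $p(G') \leq 1$. A parallel case analysis on a $T_6$ component $K$ of $G'$---ruling out ``neither $u_i \in V(K)$'' by Claim~\ref{clm:exclue-T-6}, ``exactly one $u_i \in V(K)$'' because $u_1u_2$ would then be a cut edge of the component, and ``both in $V(K)$ with $u_1u_2$ not a $T_6$-edge'' by Claim~\ref{clm:exclude-T-6-2}---yields $q(G') \leq 1$ with $\{u_1,u_2\} \subseteq V(K)$ and $u_1u_2$ a $T_6$-edge. Finally, $p(G') = q(G') = 1$ cannot occur simultaneously: both $H$ and $K$ would contain $\{u_1, u_2\}$, but the component property of $K$ forces all three cube-neighbors of $u_1$ (and of $u_2$) in $H$ to lie in $V(K)$, so $V(K)$ would have to equal the $6$-vertex set $N_H[u_1] \cup N_H[u_2]$; the two remaining vertices of $V(H) \setminus V(K)$ would then have cube-neighbors crossing into $V(K)$, contradicting that $K$ is a component.

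With $p(G') + q(G') \leq 1$ established, constraints~\eqref{seq:our-iso1}, \eqref{seq:our-triv}, \eqref{seq:our-triv3}, and~\eqref{seq:our-triv4} jointly guarantee $1 - a + b - c\, p(G') - d\, q(G') \geq 0$ in each possible subcase, delivering the desired contradiction.
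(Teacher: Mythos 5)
Your proof is correct and takes essentially the same approach as the paper: your $G' = (G-v)+u_1u_2$ is the same graph as the paper's contraction of an edge incident to $v$, and the reduction to constraints~\eqref{seq:our-triv3} and~\eqref{seq:our-triv4} via $p(G')+q(G')\le 1$ matches. You simply spell out in more detail why $p(G')+q(G')\le 1$ (including ruling out the case $p(G')=q(G')=1$, which is also immediate since a $Q_3^{1-}$ subgraph through the new vertex would have to sit inside the $6$-vertex $T_6$ component), where the paper states it tersely as ``any $Q_3^{1-}$ subgraph and $T_6$ component of $G'$ must contain $u$.''
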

\begin{proof}
 Let $G'$ be the graph obtained from $G$ by contracting an incident edge of $v$. Suppose that $v$ is the only common neighbor of its neighbors, then, $G'$ is triangle-free. Let $u$ be the vertex obtained after the contraction. Any $Q_3^{1-}$ subgraph and $T_6$ component of $G'$ must contain $u$. Thus, $p(G') + q(G') \leq 1$. Since $G'$ has strictly smaller order than $G$, $G'$ has a forest $F'$ of order at least $a n(G') - bm(G') - c p(G') - dq(G')$. We note that $n'(G) = n(G)-1$ and $m(G') = m(G)-1$. If $p(G') = 1$, $F'\cup \{v\}$ is an induced forest in $G$ of order at least:
\begin{equation*}
1 + a(n(G)-1) - b(m(G)-1) - c = a n(G) - b m(G) + 1- a + b -c
\end{equation*}
Thus, by adding Inequality~\eqref{lpc:4f-2va} to $\lp$, we deduce that $\varphi(G) \geq a n(G) - b m(G) $, contradicts that $G$ is a counter-example.
\begin{equation} \label{lpc:4f-2va}
 1 - a + b -c  \geq 0
\end{equation}
If $q(G') = 1$, $F'\cup \{v\}$ is an induced forest in $G$ of order at least:
\begin{equation*}
1 + a(n(G)-1) - b(m(G)-1) - d = a n(G) - b m(G) + 1 - a + b -d
\end{equation*}  
Thus, by adding Inequality~\eqref{lpc:4f-2vb} to $\lp$, we obtain a contradiction.
\begin{equation} \label{lpc:4f-2vb}
1 - a + b -d  \geq 0
\end{equation}
\end{proof}

\begin{claim}\label{clm:2-4-vertex}
None neighbor of a 2-vertex is a $4$-vertex.
\end{claim}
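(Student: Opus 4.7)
The plan is to suppose, for contradiction, that a $2$-vertex $v$ of $G$ has a $4$-vertex neighbor $u_1$, and then derive a contradiction by deleting $L = \{v, u_1\}$ and appealing to Observation~\ref{obs:meta-exclusion}. Let $u_2$ be the other neighbor of $v$. By Claim~\ref{clm:4-face-2-vertex}, the vertices $u_1$ and $u_2$ share a second common neighbor $w$, so $v u_1 w u_2$ is a $4$-cycle in $G$. Setting $G' = G \setminus L$ gives $n(G') = n(G) - 2$ and $m(G') = m(G) - 5$.

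The first substantive step is to verify $p(G') = q(G') = 0$. For a hypothetical $Q_3^{1-}$ subgraph $H$ of $G'$, planarity places $u_1$ inside a single $4$-face of $H$, and triangle-freeness then forces $u_1$ to contribute at most $2$ edges to $H$ (to the two non-adjacent vertices of that face); meanwhile $v$ contributes at most $1$ edge, namely $v u_2$ when $u_2 \in V(H)$. Hence $\deg_G(H) \leq 1 + 2 + 1 = 4$, contradicting Claim~\ref{clm:exclue-Q-3-5}. For a putative $T_6$ component $K$ of $G'$, $u_1$ contributes at most $2$ edges if it sits in one of $K$'s $4$-faces, giving $\deg_G(K) \leq 3$ and contradicting Claim~\ref{clm:exclude-T-6-3}; if $u_1$ sits in the unique $6$-face of $K$, then $\deg_G(K) \leq 1 + 3 = 4$, and in the borderline case $\deg_G(K) = 4$ I would observe that the three neighbors $w, x, y$ of $u_1$ in $K$ are the alternating vertices of the $6$-cycle bounding the face, while the fourth between vertex $u_2$, being $T_6$-adjacent to $w$, must also lie on that same $6$-face; then Claim~\ref{clm:exclude-T-6-4-5} supplies the contradiction.

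Once $p(G') = q(G') = 0$ is established, minimality of $G$ yields an induced forest $F'$ of $G'$ with $|F'| \geq a(n(G) - 2) - b(m(G) - 5)$. The key collection step is now trivial: because $v$'s only neighbor in $V(G')$ is $u_2$, adding $v$ to $F'$ introduces at most one edge, so $F' \cup \{v\}$ is an induced forest of $G$. Therefore
\[
\varphi(G) \;\geq\; |F'| + 1 \;\geq\; a n(G) - b m(G) + (1 - 2a + 5b).
\]
Since $1 - 2a + 5b = (1-a) + (5b - a) \geq 0$ by Constraints~\eqref{seq:our-iso1} and~\eqref{seq:our-deg-5}, and since $p(G) = q(G) = 0$ by the discussion at the start of this subsection, this contradicts the choice of $G$ as a counter-example. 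Notably no new LP constraint is introduced; the claim closes purely through existing ones.

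The step I expect to be the main obstacle is the borderline $T_6$-component analysis with $\deg_G(K) = 4$: to make Claim~\ref{clm:exclude-T-6-4-5} apply one must verify that the extra between vertex $u_2$ really does lie on the same $6$-face as $w, x, y$. The intended argument is a short case analysis of the $K_{3,3} - e$ embedding, using the $4$-cycle $v u_1 w u_2$ to locate $u_2$ among the $T_6$-neighbors of $w$ and then checking that every such neighbor lies on the $6$-face boundary.
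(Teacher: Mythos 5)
Your argument reaches the right conclusion and, at its core, is exactly the paper's: delete $L=\{v,u_1\}$, argue $p(G\setminus L)=q(G\setminus L)=0$, collect $v$, and observe $1-2a+5b\ge 0$. Where the paper simply invokes Claim~\ref{clm:meta-cub-excl} (already proved: deleting any \emph{connected} subgraph leaves no $Q_3^{1-}$ subgraph or $T_6$ component, precisely because all between vertices then lie on a single face of the hypothetical offender), you re-derive this special case by hand, which is more work and introduces a small error worth flagging. You should also note that you correctly anticipate the redundancy of the new constraint: the paper does add $1-2a+5b\ge 0$ to $\lp$ as~\eqref{lpc:2v-4v} but later verifies it equals~\eqref{seq:our-deg-5}~$+$~\eqref{seq:our-iso1}, exactly your decomposition.

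The one genuine mistake is the ``unique $6$-face of $K$'' case in your $T_6$ analysis: $T_6 = K_{3,3}-e$ has $6$ vertices, $8$ edges, hence $4$ faces, and since the girth is $4$ and $\sum_f \ell(f) = 2m = 16 = 4\cdot 4$, every face of $T_6$ is a $4$-face. There is no $6$-face, so the borderline case you identified as the ``main obstacle'' is vacuous; your first case ($u_1$ inside a $4$-face, giving $\deg_G(K)\le 3$ and contradicting Claim~\ref{clm:exclude-T-6-3}) already covers everything. Relatedly, the opening paragraph's appeal to Claim~\ref{clm:4-face-2-vertex} to produce the $4$-cycle $vu_1wu_2$ plays no role in the rest of your argument and can be dropped; neither it nor the between-vertex case analysis you sketch at the end is needed once you notice $T_6$'s faces are all quadrilaterals (or, better, once you just cite Claim~\ref{clm:meta-cub-excl}).
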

\begin{proof}
Suppose that a neighbor $u$ of a 2-vertex $v$ is a $4$-vertex. Let $G' = G-\{u,v\}$. By Claim~\ref{clm:meta-cub-excl}, $p(G') = q(G') = 0$. Observe that we can add $v$ to any induced forest of $G'$ to get an induced forest of $G$. By Observation~\ref{obs:meta-exclusion} with $L = uv$ and $(\alpha,\beta,\gamma,\eta,\lambda) = (2,5,0,0,1)$, $1 - 2a + 5b  $ must be negative.  Thus, we obtain contradiction by adding Inequality~\eqref{lpc:2v-4v} to $\lp$.
\begin{equation} \label{lpc:2v-4v}
1 - 2a + 5b   \geq 0
\end{equation}
\end{proof}
\begin{claim}\label{clm:2-2-vertex}
None neighbor of a 2-vertex is a 2-vertex.
\end{claim}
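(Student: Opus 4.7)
Suppose for contradiction that $v$ is a 2-vertex with a neighbor $u$ that is also a 2-vertex; let $w$ be the other neighbor of $v$. Applying Claim~\ref{clm:4-face-2-vertex} to $v$, its neighbors $u$ and $w$ must share a common neighbor other than $v$; since $u$ is a 2-vertex with $N_G(u)=\{v,x\}$ for some $x$, this common neighbor must be $x$, so $wx\in E(G)$ and we obtain the 4-cycle $v\,u\,x\,w$. By Claim~\ref{clm:2-4-vertex} applied to the 2-vertex $v$, its neighbor $w$ is not a 4-vertex, and by Claim~\ref{clm:no-deg-5} it is not a $5^+$-vertex, so $\deg_G(w)\le 3$; hence $w$ has at most one neighbor $w'$ outside $L:=\{u,v,w,x\}$.

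The plan is to invoke Observation~\ref{obs:meta-exclusion} on this $L$. We have $\alpha=4$ and, since the four cycle edges are removed, we may take $\beta=4$. Because $L$ induces the 4-cycle and is therefore connected, Claim~\ref{clm:meta-cub-excl} yields $p(G\setminus L)=q(G\setminus L)=0$, so we may take $\gamma=\eta=0$. It remains to collect $\lambda=3$ vertices from $L$; the candidate set is $\{u,v,w\}$.

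Fix any induced forest $F'$ of $G\setminus L$ and set $F:=F'\cup\{u,v,w\}$. Since $u$ and $v$ are 2-vertices whose only neighbors lie in $L$, in $G[F]$ the vertex $u$ has exactly one neighbor (namely $v$) and $v$ has exactly two neighbors (namely $u$ and $w$). Since $w$ has at most one neighbor $w'$ outside $L$, the only possible new edges incident to $w$ in $G[F]$ are $vw$ and, if $w'\in F'$, the edge $ww'$. Thus $G[F]$ is $G[F']$ together with the pendant path $u\text{-}v\text{-}w$ glued to $F'$ by at most one edge, creating no cycle, so $F$ is an induced forest of $G$ of order $|F'|+3$.

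Applying Observation~\ref{obs:meta-exclusion} with $(\alpha,\beta,\gamma,\eta,\lambda)=(4,4,0,0,3)$ forces $3-4a+4b$ to be negative, contradicting Inequality~\eqref{seq:ex-2-2} in $\lp$. The main obstacle is verifying that $\{u,v,w\}$ can always be collected regardless of the choice of $F'$; this hinges on combining Claim~\ref{clm:2-4-vertex} and Claim~\ref{clm:no-deg-5} to force $\deg_G(w)\le 3$, which in turn guarantees that adding $w$ to $F'$ contributes at most one new external edge and hence cannot create any cycle.
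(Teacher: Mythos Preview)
Your proof is correct and a bit cleaner than the paper's. The paper argues by a case split on the degrees of the two ``outer'' vertices of the 4-cycle: if both are 2-vertices then $G$ is the 4-cycle itself and one gets the constraint $3-4a+4b\ge 0$; otherwise one of them has degree exactly~3, the paper removes the three vertices $\{u,v,w\}$ (in its notation $w$ is your $x$), collects two, and derives the constraint $2-3a+5b\ge 0$. You instead remove the whole 4-cycle $L=\{u,v,w,x\}$ and collect $\{u,v,w\}$ in every case, which uniformly gives the single constraint $3-4a+4b\ge 0$ and avoids the case distinction entirely. The key observation enabling this---that $\deg_G(w)\le 3$ so that the path $u\text{-}v\text{-}w$ attaches to any induced forest of $G\setminus L$ by at most one edge---is exactly the leverage the paper also uses, just deployed differently.

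What your route loses relative to the paper is that the paper's proof explicitly introduces the inequality $2-3a+5b\ge 0$ (labelled \eqref{lpc:2v-2vb}), which is then cited verbatim in later arguments (Claim~\ref{clm:3-one-2-vertex}, Claim~\ref{clm:two-cons-edges-one-4-face}, Lemma~\ref{lm:no-5face-many-3vers}). This is harmless mathematically, since $2-3a+5b = 2(1-a)+(5b-a)\ge 0$ is already implied by \eqref{seq:our-iso1} and \eqref{seq:our-deg-5} in Linear Program~\ref{eq:our-linear}; but if one adopts your proof in place of the paper's, those later citations would need to point to this derivation instead.

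One small phrasing point: you write ``contradicting Inequality~\eqref{seq:ex-2-2} in $\lp$'', but in the paper's incremental framework $\lp$ does not yet contain that constraint---it is introduced exactly here. Since Theorem~\ref{thm:sub-main} assumes $a,b,c,d$ satisfy all of Linear Program~\ref{eq:our-linear}, your appeal to \eqref{seq:ex-2-2} is perfectly valid; to match the paper's convention you would phrase it as ``we obtain a contradiction by adding the inequality $3-4a+4b\ge 0$ to $\lp$''.
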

\begin{proof}
Suppose that a neighbor $u$ of a 2-vertex $v$ is a 2-vertex. Let $w$ and $w'$ be other neighbors of $u$ and $v$, respectively. By Claim~\ref{clm:2-4-vertex}, $w$ and $w'$ are $3^-$-vertices.  By Claim~\ref{clm:4-face-2-vertex}, $w$ and $w'$ must be adjacent. If both $w$ and $w'$are $2$-vertices, then $G$ is a cycle of 4 vertices. Since $G$ has a forest of order $3$, by Observation~\ref{obs:meta-exclusion} with $L = G$ and $(\alpha,\beta,\gamma,\eta,\lambda) = (4,4,0,0,3)$, $3 - 4a + 4b $ must be negative.  Thus, we obtain contradiction by adding Inequality~\eqref{lpc:2v-2va} to $\lp$.
\begin{equation} \label{lpc:2v-2va}
3 - 4a + 4b    \geq 0
\end{equation}

Thus, we may assume $w$ has degree exactly $3$. By Claim~\ref{clm:meta-cub-excl},  $G-\{u,v,w\}$ has no 
$Q_3^{1-}$ subgraph or $T_6$ component. Since we can collect $\{u,v\}$, by Observation~\ref{obs:meta-exclusion} with $L = \{u,v,w\}$ and $(\alpha,\beta,\gamma,\eta,\lambda) = (3,5,0,0,2)$, $2 - 3a + 5b $ must be negative.  Thus, we obtain contradiction by adding Inequality~\eqref{lpc:2v-2vb} to $\lp$.
\begin{equation} \label{lpc:2v-2vb}
2 - 3a + 5b    \geq 0
\end{equation}
\end{proof}

\begin{claim}\label{clm:3-one-2-vertex}
Any 3-vertex in $G$ is adjacent to at most one 2-vertex.
\end{claim}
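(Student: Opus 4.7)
The plan is to pick the smallest possible subgraph containing the offending 3-vertex and invoke Observation~\ref{obs:meta-exclusion} to contradict an inequality that has already been added to $\lp$ in a previous claim. In particular, no new linear-programming constraint is needed; the whole claim reduces to a bookkeeping argument.

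Suppose for contradiction that $v$ is a 3-vertex with two 2-vertex neighbours $u_1$ and $u_2$; let $w$ be the third neighbour of $v$ and let $u_i'$ denote the neighbour of $u_i$ other than $v$, for $i = 1, 2$. Triangle-freeness rules out $u_i' = w$ (else $v u_i w$ would be a triangle) and also $u_1' = u_2$, $u_2' = u_1$ (else $v u_1 u_2$ would be a triangle); the coincidence $u_1' = u_2'$ is possible but harmless below. The subgraph I will delete is $L = \{v, u_1, u_2\}$, which is connected since $u_1$ and $u_2$ are both adjacent to $v$.

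The key step is to verify all the parameters fed into Observation~\ref{obs:meta-exclusion}. The two internal edges of $L$ are $vu_1, vu_2$ and the three external edges are $vw, u_1u_1', u_2u_2'$, giving $\alpha = 3$ and $\beta = 5$. Since $L$ is connected, Claim~\ref{clm:meta-cub-excl} guarantees that $G \setminus L$ contains no $Q_3^{1-}$ subgraph and no $T_6$ component, and combined with $p(G) = q(G) = 0$ established right after Claims~\ref{clm:exclue-Q-3-1} and~\ref{clm:exclue-T-6}, this lets me take $\gamma = \eta = 0$. For $\lambda = 2$: deleting $v$ leaves $u_1$ and $u_2$ with degree at most $1$, attached to $u_1'$ and $u_2'$ respectively (and $u_1 \not\sim u_2$ since both are 2-vertices through $v$), so for any induced forest $F$ of $G \setminus L$ the set $F \cup \{u_1, u_2\}$ adds at most two pendants; even when $u_1' = u_2' = u'$ only the path $u_1 u' u_2$ appears, never a cycle.

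Feeding $(\alpha, \beta, \gamma, \eta, \lambda) = (3, 5, 0, 0, 2)$ into Observation~\ref{obs:meta-exclusion} forces $2 - 3a + 5b < 0$, which directly contradicts Inequality~\eqref{lpc:2v-2vb}. I do not anticipate any real obstacle beyond being careful with the edge count, which handles the cases $u_1' = u_2'$ and $u_1' \neq u_2'$ uniformly.
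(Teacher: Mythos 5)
Your proposal is correct and takes essentially the same approach as the paper: both delete the 3-vertex together with its two 2-vertex neighbours, apply Observation~\ref{obs:meta-exclusion} with $(\alpha,\beta,\gamma,\eta,\lambda) = (3,5,0,0,2)$, and contradict Inequality~\eqref{lpc:2v-2vb}. Your version is simply a bit more explicit in justifying why $\lambda = 2$ and why the edge count $\beta = 5$ is unaffected by whether $u_1' = u_2'$.
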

\begin{proof}
Suppose otherwise. Let $w$ be a 3-vertex that is adjacent to two 2-vertices $u$ and $v$. Let $G' = G-\{u,v,w\}$. By Claim~\ref{clm:meta-cub-excl}, $p(G') = q(G') = 0$. Since we can collect $\{u,v\}$, by Observation~\ref{obs:meta-exclusion} with $L = \{u,v,w\}$ and $(\alpha,\beta,\gamma,\eta,\lambda) = (3,5,0,0,2)$, $2 - 3a + 5b $ must be negative, contradicting Inequality~\eqref{lpc:2v-2vb}. 
\end{proof}

\begin{lemma}\label{lm:min-deg-3}
Every vertex of $G$ has degree at least 3.
\end{lemma}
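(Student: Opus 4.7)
The plan is to argue by contradiction: assume $G$ contains a 2-vertex $v$. Using the earlier claims, I would first pin down the local structure around $v$. By Claims~\ref{clm:2-4-vertex} and~\ref{clm:2-2-vertex}, both neighbors $u_1, u_2$ of $v$ are 3-vertices; by Claim~\ref{clm:4-face-2-vertex}, $u_1$ and $u_2$ share another common neighbor $w\neq v$, so $L := \{v,u_1,u_2,w\}$ induces the 4-cycle $vu_1wu_2$ in $G$; by Claims~\ref{clm:3-one-2-vertex} and~\ref{clm:no-deg-5}, $\deg_G(w) \in \{3,4\}$. Consequently $\alpha := n(G) - n(G\setminus L) = 4$ and $\beta := m(G) - m(G\setminus L) \in \{7,8\}$ (the four edges of the 4-cycle, the two external edges $u_1 a_1, u_2 a_2$ with $a_i$ the third neighbor of $u_i$, and the $\deg_G(w) - 2$ external edges at $w$). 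Since $L$ is connected, Claim~\ref{clm:meta-cub-excl} gives $\gamma = \eta = 0$.

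The core step is to collect $\lambda = 3$ vertices from $L$ in the sense of Observation~\ref{obs:meta-exclusion}. Letting $N_w := N_G(w)\setminus\{u_1,u_2\}$ denote the external neighborhood of $w$, the three size-$3$ subsets $X_1 := \{v,u_1,u_2\}$, $X_2 := \{v,u_1,w\}$, and $X_3 := \{v,u_2,w\}$ each induce a path in the 4-cycle $G[L]$ and therefore form a forest. Adding $X_i$ to an induced forest $F$ of $G\setminus L$ creates a cycle exactly when the external attachment points of $X_i$ --- namely $\{a_1,a_2\}$ for $X_1$, $\{a_1\}\cup N_w$ for $X_2$, and $\{a_2\}\cup N_w$ for $X_3$ --- all lie in a single component of $F$. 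I would argue by case analysis, leaning on triangle-freeness (for instance, the coincidence $a_1 = a_2 \in N_w$ is ruled out since it would produce the triangle $u_1wa_1$ in $G$) together with the planar embedding of $L$ and its attachments, that at least one $X_i$ admits a valid forest extension for a suitable induced forest $F$ of $G\setminus L$ of order at least $a(n(G)-4) - b(m(G)-7)$.

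With $\lambda = 3$ in place, Observation~\ref{obs:meta-exclusion} demands $3 - 4a + \beta b < 0$ for $G$ to remain a counterexample. But summing Constraint~\eqref{seq:Q-3-1-ex} --- which states $5 - 8a + 13b \geq 0$ --- with the trivial $1 + b \geq 0$ (a consequence of Constraint~\eqref{seq:our-triv}) and halving yields $3 - 4a + 7b \geq 0$; the $\beta = 8$ case follows by adding a further $b \geq 0$. This contradiction forces $\delta(G) \geq 3$, completing the proof.

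The main obstacle will be the case analysis in the second paragraph: systematically handling the configurations in which all external attachment points $\{a_1,a_2\}\cup N_w$ end up in a single component of every maximum induced forest of $G\setminus L$. In each such configuration I would either exhibit an alternative forest $F$ (for example, obtained by a small swap inside the offending component) that still meets the inductive lower bound and admits a working $X_i$, or extract a structural contradiction with an earlier exclusion (such as the absence of $Q_3^{5-}$ or $T_6^{5-}$ subgraphs or the adjacency restrictions on 2-vertices).
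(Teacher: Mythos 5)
Your overall framing --- use Claims \ref{clm:2-4-vertex}, \ref{clm:2-2-vertex}, \ref{clm:4-face-2-vertex}, \ref{clm:3-one-2-vertex} to pin down a $4$-cycle $v u_1 w u_2$ around a $2$-vertex $v$, then apply Observation~\ref{obs:meta-exclusion} --- is the right high-level shape, and your bookkeeping ($\alpha=4$, $\beta\in\{7,8\}$, triangle-freeness forcing $a_i\notin N_w$) is fine, as is the derivation that $3-4a+7b\geq 0$ follows from Constraints~\eqref{seq:Q-3-1-ex}, \eqref{seq:our-iso1}, \eqref{seq:our-triv}. But the plan has a genuine gap at exactly the step you flag as ``the main obstacle'': with $L$ equal only to the $4$-cycle, collecting $\lambda=3$ vertices is not always possible. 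If the maximum induced forest $F$ of $G\setminus L$ places $a_1$, $a_2$, and $N_w$ in a single component (for instance when $G\setminus L$ is connected and acyclic, so $F$ is forced to be all of it), then every $3$-element path in $G[L]$ has two attachment points in the same tree: $X_1$ via $a_1,a_2$; $X_2$ via $a_1$ and $N_w$; $X_3$ via $a_2$ and $N_w$; and the fourth path $\{u_1,w,u_2\}$ via $a_1,a_2,N_w$. In that situation only two vertices can be added, and $2-4a+7b$ is negative at the optimum $(a,b)=(\tfrac{25}{27},\tfrac{5}{27})$, so Observation~\ref{obs:meta-exclusion} gives nothing. Your proposed escape --- swap to ``an alternative forest $F$'' of the same order --- is not justified and has no reason to be available in general, and you do not identify which structural exclusion would rule the configuration out.

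The paper sidesteps this entirely by \emph{enlarging} the deleted set rather than keeping it minimal. It sets $H=\{w_1,w_2,w_3,w_4,u\}$ where $u$ is the non-$C$ neighbor of $w_2$ (your $a_1$). Deleting $u$ together with $w_3$ (your $w$) means the collected set $\{w_1,w_2,w_4\}$ has \emph{at most one} external attachment (only through $w_4$), so $F\cup\{w_1,w_2,w_4\}$ is automatically a forest --- there is no case analysis to do, and no dependence on how $F$ distributes the attachment points. This costs one more deleted vertex ($\alpha=5$), so the paper needs $\beta\geq 9$ to make $3-5a+9b\geq 0$ work; when $\beta\leq 8$ it shows $u$ must be a common neighbor of $w_2$ and $w_4$, brings in $w_3$'s external neighbor $v$, rules out $u\sim v$ via the $T_6^{2-}$ exclusion (Claim~\ref{clm:exclude-T-6-2}), and collects $4$ vertices from the resulting $6$-vertex subgraph $K$ to use $4-6a+9b\geq 0$. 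That case split is the essential content you are missing: the obstruction you anticipate is real, and it is dissolved by choosing a larger $L$ that internalizes a problematic attachment point, not by searching for a better forest.
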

\begin{proof}
Let $w_1$ be a 2-vertex of $G$ with two neigbors $w_2,w_4$. By Claim~\ref{clm:4-face-2-vertex}, $w_2$ and $w_4$ must have another common neighbor, say $w_3$. Let $C$ be the cycle $w_1w_2w_3w_4$. By Claim~\ref{clm:2-2-vertex} and~\ref{clm:2-4-vertex}, $w_2$ and $w_4$ are 3-vertices. Let $u$ be the non-$C$ neighbor of $w_2$. By Claim~\ref{clm:3-one-2-vertex}, $u$ and $w_3$ are a $3^+$-vertices. Since $G$ is triangle free, $u$ cannot be a neighbor of $w_3$. Let $H$ be the induced subgraph of $G$ induced by $\{w_1,w_2,w_3,w_4,u\}$. We can collect 3 vertices $w_4,w_1,w_2$ from $H$. By Claim~\ref{clm:meta-cub-excl}, $p(G\setminus H) = q(G\setminus H) = 0$. If $m(G) - m(G\setminus H)$ is at least $9$, by Observation~\ref{obs:meta-exclusion} with $L = H$ and $(\alpha,\beta,\gamma,\eta,\lambda) = (5,9,0,0,3)$, $3 - 5a + 9b $ must be negative.  Thus, we obtain contradiction by adding Inequality~\eqref{lpc:min-deg-3a} to $\lp$.
\begin{equation} \label{lpc:min-deg-3a}
3 - 5a + 9b  \geq 0
\end{equation}
 Thus, we can assume $m(G) - m(G\setminus H)\leq 8$. That implies $u$ must be a neighbor of $w_4$ and $w_2$ is a 3-vertex (see Figure~\ref{fig:min-deg-3}(a)).  Since $G$ is two connected, the non-$H$ neighbor of $u$ must be embedded in the same side with the non-$H$ neighbor of $w_3$ with respect to the cycle $uw_4w_3w_2$. Let $v$ be the non-$H$ neighbor of $w_3$. Let $K$ be the subgraph of $G$ induced by $\{w_1,w_2,w_3,w_4,u,v\}$. If $u$ and $v$ are adjacent, $K$ is $T_6^{3-}$, contradicting Claim~\ref{clm:exclude-T-6-2}. Thus, $u$ and $v$ are not adjacent and hence,  we can collect $\{u,w_1,w_2,w_3\}$ from $K$. By Observation~\ref{obs:meta-exclusion} with $L = K$ and $(\alpha,\beta,\gamma,\eta,\lambda) = (6,9,0,0,4)$, $4 - 6a + 9b $ must be negative.  Thus, we obtain contradiction by adding Inequality~\eqref{lpc:min-deg-3b} to $\lp$.
\begin{equation} \label{lpc:min-deg-3b}
4 - 6a + 9b    \geq 0
\end{equation}
\end{proof}
\begin{figure}[tbh]
  \centering
   \includegraphics[height=1.2in]{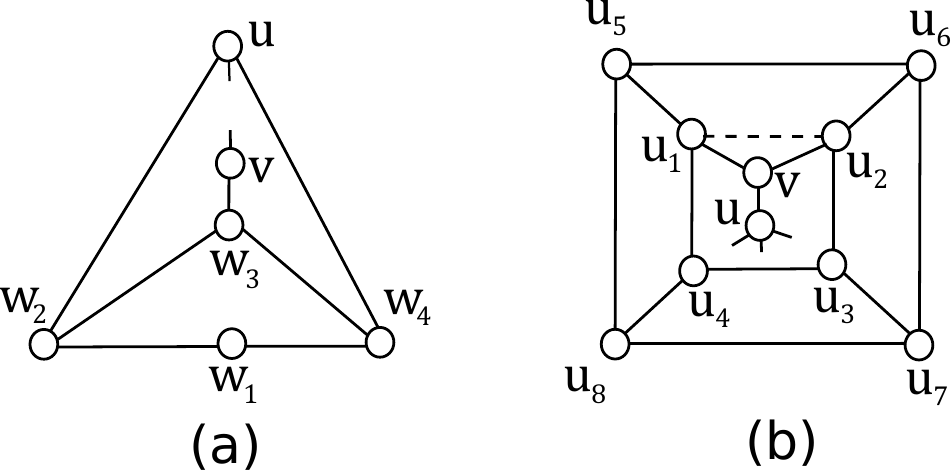}
      \caption{(a) A configuration in the proof of Lemma~\ref{lm:min-deg-3} (b) A configuration in the proof of Claim~\ref{clm:3-4-adj}}
  \label{fig:min-deg-3}
\end{figure} 
\subsection{Avoiding small cut}

A separating cycle is a cycle that separates the plane into two regions, each has non-empty interior. 

\begin{claim} \label{clm:3-4-adj}
Let $v$ be a 3-vertex that is adjacent to a 4-vertex $u$. Then two neighbors of $v$ other than $u$ must share a neighbor other than $v$. 
\end{claim}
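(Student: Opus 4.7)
The plan is to argue by contradiction. Suppose $w_1, w_2$ (the two neighbors of $v$ other than $u$) share no common neighbor besides $v$. Triangle-freeness of $G$ already forces $\{u, w_1, w_2\}$ to be an independent set, since any adjacency among them would give a triangle through $v$; the hypothesis additionally rules out any length-$2$ path between $w_1$ and $w_2$ in $G$ that avoids $v$.

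I would apply Observation~\ref{obs:meta-exclusion} to the subgraph $L$ of $G$ induced by $\{v, u\}$. The accounting gives $\alpha = |V(L)| = 2$; the edges counted by $\beta$ are $uv$ together with the three other edges incident to $u$ and the two other edges $vw_1, vw_2$, so $\beta = 6$; and since $L$ is connected via the edge $uv$, Claim~\ref{clm:meta-cub-excl} gives $p(G\setminus L) = q(G\setminus L) = 0$, hence $\gamma = \eta = 0$. The resulting inequality $\lambda - 2a + 6b \geq 0$ is already implied by the LP constraints $5b - a \geq 0$ and $1 - a \geq 0$ as soon as $\lambda \geq 1$ (since then $1 - 2a + 6b \geq 1 - 2a + a = 1 - a \geq 0$), so it suffices to exhibit at least one vertex of $L$ that can always be collected back into any induced forest of $G\setminus L$.

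To establish $\lambda \geq 1$, I would argue that for any induced forest $F$ of $G\setminus L$, either $v$ or $u$ can be added back while preserving forest-ness. Adding $v$ fails only when both $w_1, w_2 \in V(F)$ and lie in a common tree of $F$; the no-common-neighbor hypothesis then forces the unique $w_1$--$w_2$ path in that tree to have length at least $3$. Adding $u$ fails only when at least two of $u$'s other neighbors $x_1, x_2, x_3$ lie in a common tree of $F$. I expect the main obstacle to be the simultaneous-failure case: one would exploit both the long $w_1$--$w_2$ path and the path joining two of the $x_i$ in $F$, together with planarity and the degree bound $\Delta(G) \le 4$ (Claim~\ref{clm:no-deg-5}), to produce a forbidden substructure of $G$, such as a $Q_3^{5-}$ subgraph (contradicting Claim~\ref{clm:exclue-Q-3-5}) or a $T_6^{5-}$ subgraph with all between vertices on a single face (contradicting Claim~\ref{clm:exclude-T-6-4-5}).

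A conceptually cleaner alternative would be to set $G' = G - v + w_1w_2$: the new edge can be embedded along the image of the path $w_1 v w_2$ in any planar embedding of $G$, and triangle-freeness of $G'$ follows since any triangle using $w_1w_2$ would force a common neighbor of $w_1, w_2$ in $G$ other than $v$. Applying induction to $G'$ (which has $n-1$ vertices and $m-2$ edges) and then lifting by subdividing the edge $w_1w_2$ back into the path $w_1 v w_2$ gives a forest in $G$ of size $|F'|+1$ whenever both $w_1, w_2 \in V(F')$. The technical hurdle in this alternative is to show that the added edge does not raise $p(G')$ or $q(G')$, i.e., that no new $Q_3^{1-}$ subgraph or $T_6$ component using $w_1w_2$ arises; I would rule this out using the earlier exclusion claims for $Q_3^{5-}$ and $T_6^{5-}$ in $G$.
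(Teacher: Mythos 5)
Your second ``conceptually cleaner alternative'' is the closest to the paper's actual argument, but it has a genuine gap. The paper sets $G'$ to be $G$ with \emph{both} $u$ and $v$ deleted, plus the new edge $xy$ (your $w_1w_2$), so $n(G') = n(G)-2$ and $m(G') = m(G)-5$. You instead only delete $v$, keeping $u$ in $G'$. This breaks the lifting step: if $F'$ is an induced forest of $G'$ containing $u$ and, say, both $w_1$ and $w_2$, then after replacing the edge $w_1w_2$ by the path $w_1vw_2$, the vertex $v$ is \emph{also} adjacent to $u$ in $G$. If $u$ lies on the $F'$-tree containing $w_1$ and $w_2$ (or even in the case where only one of $w_1,w_2$ is in $F'$ but shares a tree with $u$), adding $v$ closes a cycle. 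Deleting $u$ along with $v$ is exactly what makes $V(F')\cup\{v\}$ an induced forest unconditionally: $v$'s only possible neighbors in $V(F')$ are then $w_1,w_2$, and since $w_1w_2 \notin E(G)$, this is just a subdivision of the (possibly absent) edge $w_1w_2$ of $F'$.

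Your first approach, applying Observation~\ref{obs:meta-exclusion} directly with $L=\{u,v\}$, is also not in a usable state. The framework's notion of ``collect'' requires a \emph{fixed} subset $X\subseteq V(L)$ that extends every induced forest of $G\setminus L$; you correctly observe that neither $\{v\}$ nor $\{u\}$ alone is guaranteed to work, and that a ``whichever one works for this particular $F$'' argument must rule out the simultaneous-failure case. You only gesture at deriving a contradiction from a $Q_3^{5-}$ or $T_6^{5-}$ substructure in that case, but do not exhibit the substructure or verify that it meets the hypotheses of Claims~\ref{clm:exclue-Q-3-5} or~\ref{clm:exclude-T-6-4-5}. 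This step is the whole content of the claim and cannot be left as a speculation.

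Finally, on the ``technical hurdle'' you flag for your alternative: your instinct that one must control $p(G')$ and $q(G')$ is right, but the resolution in the paper is not purely an exclusion. The paper shows $q(G')=0$ by a degree/embedding argument on a hypothetical $T_6$ component through $xy$; but for $p(G')$ it \emph{allows} $p(G')\geq 1$ and instead, in that case, directly collects six vertices from a nine-vertex subgraph $M$ of $G$ built around the $Q_3^{1-}$ subgraph, leading to the separate constraint~\eqref{lpc:3v-4va}. So ``rule out a new $Q_3^{1-}$ via Claim~\ref{clm:exclue-Q-3-5}'' is not quite how the case is closed.
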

\begin{proof}
Suppose otherwise. Let $x,y$ be neighbors of $v$ such that $x\not= u$ and $y\not= u$. Let $G'$ be the graph obtained from $G$ by deleting $u,v$ and adding an edge between $x$ and $y$. We now show that $q(G') = 0$.

By Claim~\ref{clm:meta-cub-excl}, $G-\{u,v\}$ contains no $T_6$ component. If $G'$ contains a $T_6$ component $K$, then $K$ must contain edge $xy$. Since $\delta(G) \geq 3$, $x$ and $y$ must be 3-vertices in $K$. By symmetry of $T_6$, we can assume~w.l.o.g that $x \equiv v_4$ and $y \equiv v_2$. If $u$ is embedded inside the cycle $v_1v_2v_4v_3$, then $v_5$ must be a 2-vertex in $G$. Otherwise, $v_6$ must be a 2-vertex in $G$. Both cases contradict that $\delta(G) \geq 3$.

Suppose that $G'$ contains a $Q_3^{1-}$ component $H$. Edge $xy$ must belongs to $H$. We assume~w.l.o.g that $x\equiv u_1$ and $y\equiv u_2$. Let $M$ be the subgraph of $G$ induced by $\{u_1,u_2,u_3,u_4,u_5,u_6,u_7,u_8,v\}$. By Claim~\ref{clm:meta-cub-excl}, $p(G\setminus M) = q(G\setminus M) = 0$. By the symmetry of $H$, we can assume that $u$ is embedded inside the cycle $u_1vu_2u_3u_4$ (see Figure~\ref{fig:min-deg-3}(b)).  Since $\deg_{G'}(H) \leq 1$,  at most one vertex  in $\{u_1,u_2\}$ is a 4-vertex. Let $z$ be a 3-vertex in $\{u_1,u_2\}$. Since $G$ is triangle-free, $u$ can have at most one neighbor in $\{u_3,u_4\}$. If $u_3$ is a 3-vertex, then we can collect $\{v,z,u_3,u_5,u_7,u_8\}$ from $M$. If $u_4$ is a 3-vertex, then we can collect $\{v,z,u_4,u_6,u_7,u_8\}$ from $M$. Thus, in any case, we can collect $6$ vertices from $M$. By Observation~\ref{obs:meta-exclusion} with $L = M$ and $(\alpha,\beta,\gamma,\eta,\lambda) =(9,14,0,0,6)$, $6 - 9a + 14b $ must be negative.  We obtain contradiction by adding Inequality~\eqref{lpc:3v-4va} to $\lp$.
\begin{equation} \label{lpc:3v-4va}
6 - 9a + 14b  \geq 0
\end{equation}

Thus, we can assume $p(G') = 0$. Hence, $G'$ has a forest $F'$ of order at least $an(G') - bm(G')$. We recall that $xy$ is a non-edge of $G$. Thus, $V(F')\cup \{v\}$ induces a forest of $G$.  Since $n(G') = n(G)-2$ and $m(G') = m(G)-5$, $G$ has a forest on order at least:
\begin{equation*}
a(n(G)-2)- b (m(G)-5) + 1 = an(G) - bm(G) + 1+5b-2a
\end{equation*} 
Thus, we obtain contradiction by adding Inequality~\eqref{lpc:3v-4vb} to $\lp$.
\begin{equation} \label{lpc:3v-4vb}
1 + 5b - 2a  \geq 0
\end{equation}
\end{proof}

\begin{claim} \label{clm:C-non-adj-deg-3}
Let $C$ be a 4-cycle of $G$ that has at least one 3-vertex and at most two 3-vertices. Then, (i) any two 3-vertices of $C$ must be adjacent and two non-$C$ edges adjacent to two 3-vertices must be embedded in the same side of $C$
 and (ii) two non-$C$ edges of a $4$-vertex which is not adjacent to a 3-vertex of $C$ must be embedded in the same side of $C$. 
\end{claim}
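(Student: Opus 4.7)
The plan is to prove all three sub-statements ((i)(a), (i)(b), and (ii)) by contradiction, following the recipe established in Section~\ref{sec:proof-sub-main}: choose a connected induced subgraph $L\supseteq V(C)$ of $G$, invoke Claim~\ref{clm:meta-cub-excl} to obtain $p(G\setminus L)=q(G\setminus L)=0$, and then apply Observation~\ref{obs:meta-exclusion} with a collectible count $\lambda$ and edge deficit $\beta$ large enough to contradict an inequality already added to $\lp$. Write $C=w_1w_2w_3w_4$; by Lemma~\ref{lm:min-deg-3} and Claim~\ref{clm:no-deg-5} every $w_i$ is a $3$- or $4$-vertex.

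For part (i)(a), suppose the two $3$-vertices are diagonal, say $w_1$ and $w_3$, so that $w_2,w_4$ are $4$-vertices. Let $x,y$ be the unique non-$C$ neighbors of $w_1,w_3$. Applying Claim~\ref{clm:3-4-adj} to the $(3,4)$-pair $(w_1,w_2)$ forces the remaining two neighbors $w_4,x$ of $w_1$ to share a common neighbor $z\neq w_1$. Triangle-freeness rules out $z\in\{w_2,w_4\}$, so either $z=w_3$ — giving $x=y$ and a $K_{2,3}$ on $\{w_1,w_3\}\cup\{w_2,w_4,x\}$ — or $z$ is a fresh vertex outside $V(C)\cup\{x,y\}$. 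In the first branch, a second application of Claim~\ref{clm:3-4-adj} to $(w_1,w_4)$ (and, if necessary, to $(w_3,w_2)$) produces further forced adjacencies among the non-$C$ neighbors of $w_2,w_4$, collapsing the configuration into a $T_6^{5-}$ or $Q_3^{5-}$ subgraph of $G$, contradicting Claim~\ref{clm:exclude-T-6-4-5} or Claim~\ref{clm:exclue-Q-3-5}. In the second branch, take $L=V(C)\cup\{x,y,z\}$: the $3$-vertices $w_1,w_3$ have all their edges inside $L$ and hence can be safely collected, and the edge count of $L$ is large enough that Observation~\ref{obs:meta-exclusion} violates a recorded $\lp$ inequality.

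Parts (i)(b) and (ii) follow a common template. In each case, take $L$ to consist of $V(C)$ together with the relevant non-$C$ neighbors (the two for the two $3$-vertices of $C$ in (i)(b), or the two for the offending $4$-vertex in (ii)). The opposite-side embedding assumption, combined with the planarity of $G$ and with part (i)(a) already established in (ii), forces the edge deficit $\beta=m(G)-m(G\setminus L)$ above the threshold at which the collectible set — the two $3$-vertices together with a neighbor for (i)(b), or the middle $4$-vertex together with a suitable adjacent vertex for (ii) — yields via Observation~\ref{obs:meta-exclusion} an inequality contradicting one of the $\lp$ constraints recorded in Section~\ref{sec:ex-cub-T-6-like}. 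The opposite-side hypothesis is essential here precisely because it rules out coincidences among the non-$C$ endpoints that would otherwise depress $\beta$ below the threshold.

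The main obstacle is the planarity-sensitive bookkeeping: one must verify in every sub-case that the chosen collectible subset of $L$ has no outgoing edges (so that adjoining it to any induced forest of $G\setminus L$ yields an induced forest of $G$, in the spirit of Lemma~\ref{lm:cublike-forest}) and that $L$ is connected so Claim~\ref{clm:meta-cub-excl} applies. Once the right $L$ and the right collectible set are pinned down in each branch, the arithmetic against the existing $\lp$ inequalities is mechanical.
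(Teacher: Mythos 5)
Your proposal misses the paper's key idea and in the process misidentifies the role of the ``different sides of $C$'' hypothesis. The paper's proof is much simpler: it takes $L=C$ itself ($\alpha=4$), notes that since every vertex has degree at least~$3$, at most two vertices of $C$ are $3$-vertices, and $C$ has no chord (triangle-freeness), the degree sum forces $\beta = m(G)-m(G\setminus C) \geq 10$; then it argues that in each failure scenario one can collect \emph{two vertices of $C$} into an induced forest of $G\setminus C$, so $2-4a+10b$ must be negative, contradicting Inequality~\eqref{lpc:3v-4vb}. The opposite-side hypothesis is precisely what makes this collection legal: if the two collected vertices of $C$ are non-adjacent and the other's two outgoing edges land on opposite sides of $C$ (or, for the diagonal $3$-vertex case, each has a single outgoing edge), then the Jordan curve $C$ separates the relevant non-$C$ neighbors into different components of $G\setminus C$, so adjoining the pair cannot close a cycle. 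You instead claim the opposite-side hypothesis ``rules out coincidences among the non-$C$ endpoints that would otherwise depress $\beta$ below the threshold'' --- that is wrong; $\beta\geq 10$ holds unconditionally from the degree count, and the hypothesis is doing forest-preservation work, not edge-counting work.

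Beyond this conceptual error, your enlarged choices of $L=V(C)\cup\{x,y,z\}$ or $V(C)$ plus non-$C$ neighbors introduce genuine gaps: you invoke Claim~\ref{clm:3-4-adj} for the diagonal case and then assert without argument that the configuration ``collapses into a $T_6^{5-}$ or $Q_3^{5-}$ subgraph'' or that ``the edge count of $L$ is large enough''; no collectible set, edge count, or target inequality is actually verified. Moreover Claim~\ref{clm:3-4-adj} requires a $3$-vertex adjacent to a $4$-vertex in $G$, which is not available if both $C$-neighbors of a given $3$-vertex are $3$-vertices --- but the claim allows up to two $3$-vertices and you never check this. Finally, for (i)(b) and (ii) your proposal to collect ``the middle $4$-vertex together with a suitable adjacent vertex'' does not describe what is actually collectible; the correct pair consists of the $3$-vertex and the diagonal vertex of $C$ whose outgoing edges straddle $C$, both lying on $C$ itself.
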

\begin{proof}
Let $\{w_1,w_2,w_3,w_4\}$ be clockwise ordered vertices of $C$. Without loss of generality, we assume $w_1$ is a 3-vertex of $C$ and its non-$C$ edge is embedded outside $C$. Suppose that the claim fails. We show that we can collect 2 vertices from $C$. If (i) fails, the other 3-vertex of $C$, denoted by $x$, is $w_3$ or a neighbor of $w_1$ such that its non-$C$ edge is embedded inside $C$. Then, we can collect $\{x,w_1\}$ from $C$. If (ii) fails, let $w_i$ and $w_j$ be two non-adjacent vertices of $C$ such that $w_i$ is a 3-vertex and $w_j$ has two non-$C$ edges that are embedded in different sides of $C$. Then, we can collect $\{w_i,w_j\}$ from $C$. By Claim~\ref{clm:meta-cub-excl}, $p(G\setminus C) = q(G\setminus C) = 0$. Since $m(G\setminus C) \leq m(C) - 10$, by Observation~\ref{obs:meta-exclusion} with $L = C$ and $(\alpha,\beta,\gamma,\eta,\lambda) =(4,10,0,0,2)$, $2 - 4a + 10b $ must be negative, contradicting Inequality~\eqref{lpc:3v-4vb}. 
\end{proof}

\begin{claim} \label{clm:sepC-four-3-vers}
Graph $G$ excludes any separating $4$-cycle that has four 3-vertices.
\end{claim}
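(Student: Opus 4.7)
The plan is to split $G$ along the cycle $C$ into two smaller triangle-free planar subgraphs, apply the inductive hypothesis to each, and then combine the resulting induced forests into an induced forest of $G$ with essentially no loss. Let $C = w_1 w_2 w_3 w_4$ be the separating 4-cycle with all four $w_i$ of degree 3, and let $I, O$ denote the non-empty sets of vertices strictly inside and outside $C$ in a fixed planar embedding. Each $w_i$ has a unique non-$C$ neighbor $x_i$, lying in either $I$ or $O$; write $k_I$ and $k_O = 4 - k_I$ for the respective counts. Planarity forbids any edge directly between $I$ and $O$, and the 2-edge-connectivity from Claim~\ref{clm:2-edge-conn} combined with $I, O$ non-empty forces $1 \leq k_I, k_O \leq 3$.

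Consider the induced subgraphs $G[I]$ and $G_2 := G[V(C) \cup O]$, both triangle-free planar and of strictly smaller order than $G$. I first verify that $p(G[I]) = q(G[I]) = p(G_2) = q(G_2) = 0$. For $G[I]$ (and $G[O]$), Claim~\ref{clm:meta-cub-excl} applied to the connected subgraph $V(C)$ gives $p(G \setminus V(C)) = q(G \setminus V(C)) = 0$, and since $G \setminus V(C) = G[I] \sqcup G[O]$ both counts vanish on each side. For $G_2$, any $Q_3^{1-}$ induced subgraph $H$ satisfies $\deg_G(H) \leq 1 + k_I \leq 4$ (each extra edge arising from an inward $w_i \in V(H)$ reaching its $x_i \in I$), making it a $Q_3^{4-}$ subgraph of $G$ excluded by Claim~\ref{clm:exclue-Q-3-4}; likewise any $T_6$ component of $G_2$ would be $T_6^{3-}$ in $G$, excluded by Claim~\ref{clm:exclude-T-6-3}. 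By the minimality of $G$ as a counter-example I therefore obtain induced forests $F_I$ and $F_2$ with $|F_I| \geq a|I| - bm(G[I])$ and $|F_2| \geq an(G_2) - bm(G_2)$.

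The core of the argument is that $F := F_I \cup F_2$ is already an induced forest of $G$. Since $V(F_I) \subseteq I$ and $V(F_2) \subseteq V(C) \cup O$ are disjoint and $G$ has no edge between $I$ and $O$, the only edges of $G[V(F)]$ beyond those internal to $F_I$ and to $F_2$ are bridge edges $w_i x_i$ with inward $w_i \in F_2$ and $x_i \in F_I$; there are at most $k_I \leq 3$ such bridges. Contracting each tree of $F_I$ and each tree of $F_2$ turns these bridges into a bipartite graph on the tree-components, and since any bipartite cycle requires at least four edges, a simple bipartite graph with at most $3$ edges is a forest, so $F$ is an induced forest. A direct substitution using $n(G_2) = |O| + 4$, $m(G_2) = m(G[O]) + 4 + k_O$, and $m(G) = m(G[I]) + m(G[O]) + 4 + k_I + k_O$ simplifies the size bound to $|F| \geq an(G) - bm(G) + bk_I \geq an(G) - bm(G)$, contradicting that $G$ is a counter-example. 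The main technical obstacle I expect is ruling out \emph{multi-edges} in the bipartite tree-component graph — two bridges joining the same tree-pair correspond to a short cycle in $G[V(F)]$ — which can arise only when two opposite $w_i$'s are inward with a common or $F_I$-connected $x$-endpoint and lie in a common $F_2$-tree; a short case analysis based on the degree bound $\Delta(G) \leq 4$ (Claim~\ref{clm:no-deg-5}), the exclusion of $K_{3,3}$ subdivisions by planarity, and the previously excluded $Q_3^{d-}$ and $T_6^{d-}$ substructures should dispose of this configuration without losing any vertex.
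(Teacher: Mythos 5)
Your approach is a genuine departure from the paper's. The paper deletes $H = V(C) \cup \{u\}$ for an outer neighbor $u$ of one cycle vertex, observes that three cycle vertices (the two whose non-$C$ edges go outward and one whose non-$C$ edge goes inward) can be collected because their pendant edges land in different components of $G \setminus H$ (here the separating nature of $C$ is used only to keep the inside and the outside from reconnecting), and closes with Observation~\ref{obs:meta-exclusion} using $(\alpha,\beta,\gamma,\eta,\lambda)=(5,10,0,0,3)$ against Inequality~\eqref{lpc:min-deg-3a}. Your cut-and-combine scheme — applying the inductive hypothesis to both sides of $C$ and gluing the two forests — is structurally different and, if it worked, would be a cleaner high-level argument; but it does not, as it stands.

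The gap you flag at the end is real, not a loose thread. First, note that 2-edge-connectivity forces $k_I = k_O = 2$ exactly (a single inward or outward edge from $C$ would be a bridge), so you are always in the two-inward-bridge situation. Second, nothing you cite rules out the two bridges $w_i x_i$ and $w_j x_j$ joining the same pair of trees: $w_i$ and $w_j$ can be connected inside $F_2$ through the other cycle vertices or through the outer region, and $x_i, x_j$ can certainly lie in a common tree of $F_I$; the degree bound $\Delta(G)\leq 4$, planarity, and the $Q_3/T_6$ exclusions do not forbid this. The natural repair — drop one vertex from $F_2$ to break the cycle — costs a full unit, whereas your slack is only $bk_I = 2b = 10/27 < 1$ at the paper's parameter choice, so the loss cannot be absorbed. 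Until the multi-edge configuration is concretely excluded (or the combination modified to tolerate it), the assertion that $F_I \cup F_2$ is an induced forest is unsubstantiated and the proposal does not establish the claim.
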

\begin{proof}
Let $w_1,w_2,w_3,w_4$ be 3-vertices of a separating $4$-cycle $C$. Since $C$ is separating and $G$ is two-edge connected, two non-$C$ edges of $C$ must be embedded inside $C$ and two other non-$C$ edges must be embedded outside $C$. We assume~w.l.o.g that the non-$C$ edge of $w_1$ is embedded outside $C$. Let $u$ be the non-$C$ neighbor of $w_1$. Let $w_i$, $i\not=1$, be a vertex of $C$ that has its non-$C$ edge embedded outside $C$ and $w_j$ be a vertex of $C$ that has its non-$C$ edge embedded inside $C$. Let $H$ be the subgraph of $G$ induced by $\{w_1,w_2,w_3,w_4,u\}$. We can collect $\{w_1,w_i,w_j\}$ from $H$. We now argue that $m(G\setminus H)\leq m(G) - 10$. Since $G$ is triangle-free, $u$ has at most two neighbors in $C$. If $u$ has only one neighbor in $G$ which is $w_1$, then $m(G\setminus H)\leq m(G) - 10$ since $\delta(G) \geq 3$. If $u$ has exactly two neighbors in $G$, they must be $w_1$ and $w_3$. That means the non-$C$ edge of $w_3$ is embedded outside $C$. Since $C$ is separating, two non-$C$ edges incident to $w_2$ and $w_4$ must be embedded inside $C$. Thus, $u$ must have two non-$H$ incident edges since $G$ is two-edge connected and $\delta(G) \geq 3$. That implies $m(G\setminus H)\leq m(G) - 10$. 

By Observation~\ref{obs:meta-exclusion} with $L = H$ and $(\alpha,\beta,\gamma,\eta,\lambda) =(5,10,0,0,3)$, $3 - 5a + 10b $ must be negative, contradicting Inequality~\eqref{lpc:min-deg-3a} since $b$ is non-negative.
\end{proof}

\begin{claim} \label{clm:sepC-thee-3-vers}
Any separating cycle of length $4$ of $G$ must have at most two 3-vertices. 
\end{claim}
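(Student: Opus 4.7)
Suppose for contradiction that $C = w_1 w_2 w_3 w_4$ is a separating $4$-cycle of $G$ with at least three $3$-vertices. By Claim~\ref{clm:sepC-four-3-vers}, $C$ has at most three $3$-vertices, hence exactly three; after relabeling assume $w_1, w_2, w_3$ are the $3$-vertices and $w_4$ is the $4$-vertex. The five non-$C$ edges of $C$ (one from each $3$-vertex and two from $w_4$) split between the two sides of $C$ with at least two on each side by two-edge-connectivity and the separating property, giving a $(2,3)$ or $(3,2)$ inside/outside distribution.

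The plan is to adapt the argument of Claim~\ref{clm:sepC-four-3-vers}: pick a non-$C$ neighbor $u$ of some vertex of $C$, set $L = C \cup \{u\}$ (a connected subgraph, so that Claim~\ref{clm:meta-cub-excl} yields $p(G\setminus L) = q(G\setminus L) = 0$ and hence $\gamma = \eta = 0$), and collect three vertices of $L$ whose non-$L$ neighbors lie in distinct connected components of $G\setminus L$. An edge count analogous to the one in Claim~\ref{clm:sepC-four-3-vers}, boosted by the extra non-$C$ edge contributed by the $4$-vertex $w_4$, will still yield $\beta \geq 10$. Observation~\ref{obs:meta-exclusion} applied with $(\alpha, \beta, \gamma, \eta, \lambda) = (5, 10, 0, 0, 3)$ then forces $3 - 5a + 10b$ to be negative, contradicting $3 - 5a + 10b \geq 3 - 5a + 10b - c \geq 0$, which follows from constraints~\eqref{seq:T-3-45-ex} and~\eqref{seq:our-triv1} of Linear Program~\ref{eq:our-linear}.

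The case analysis centers on where $w_4$'s two non-$C$ edges go. In the easy case, $w_4$ has at least one non-$C$ edge on each side of $C$; then the $(2,3)$ or $(3,2)$ split among $\{w_1, w_2, w_3\}$ forces two of the $3$-vertices to have non-$C$ neighbors on opposite sides of $C$. Taking $u$ to be the non-$C$ neighbor of the remaining $3$-vertex $w_\ell$ and collecting those three $3$-vertices reproduces the argument of Claim~\ref{clm:sepC-four-3-vers} almost verbatim: $w_\ell$ contributes no edge to $V(G)\setminus V(L)$, and the non-$C$ neighbors of the other two $3$-vertices lie in different components of $G\setminus L$ (since $L$ contains all of $C$, which separates inside from outside).

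The main obstacle is the remaining subcase, in which both of $w_4$'s non-$C$ edges lie on the same side of $C$. Then all three non-$C$ edges of $w_1, w_2, w_3$ lie on the opposite side, and no pair of $3$-vertices of $C$ has non-$C$ neighbors on opposite sides, so the direct analogue of Claim~\ref{clm:sepC-four-3-vers} breaks down. To close this subcase I would put $w_4$ itself into the collected set while absorbing $w_4$'s two same-side neighbors $u_1, u_2$ into $L$; then collect $\{w_4, w_1, w_3\}$ (the non-adjacent pair of $3$-vertices together with $w_4$, which now has no non-$L$ edges), and enlarge $L$ further by a carefully chosen outside vertex (for example, the non-$C$ neighbor of one of $w_1, w_3$) so that the non-$C$ neighbors of $w_1$ and $w_3$ land in different components of $G\setminus L$. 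Verifying that after these enlargements $L$ remains connected and the edge count still matches the $(\alpha, \beta, \lambda)$ triple needed to invoke the LP constraint $3 - 5a + 10b - c \geq 0$ is where I expect the heaviest bookkeeping.
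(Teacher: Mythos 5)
Your approach genuinely diverges from the paper's, and in its current form it has a gap that you yourself flag.

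The paper's proof does not split on how $w_4$'s two non-$C$ edges distribute across the two sides of $C$. Instead it applies Claim~\ref{clm:3-4-adj} to each of $w_1$ and $w_3$ (both $3$-vertices adjacent to the $4$-vertex $w_4$), forcing, for the non-$C$ neighbors $x,y,z$ of $w_1,w_2,w_3$, that $xy$ and $yz$ are edges of $G$ (or $x=z$); planarity then pins $x,y,z$ to the \emph{same} side of $C$, so both of $w_4$'s non-$C$ edges necessarily lie on the opposite side. Thus your ``easy case'' (where $w_4$ has one non-$C$ edge on each side) is in fact vacuous, and the paper only ever confronts your ``hard case'' --- but armed with the $xy,yz$ edges, which is precisely what makes the vertex/edge accounting work. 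Its cases are then $x=z$ (collect $\{w_1,w_2,w_3\}$ from a $5$-vertex $L$, $\beta=10$, $\lambda=3$, reusing the existing constraint $3-5a+10b\geq 0$), $x\neq z$ with $x,y,z$ all $3$-vertices (collect $\{w_2,w_3,x,y\}$ from a $6$-vertex $L$, $\beta=11$, $\lambda=4$), and otherwise collect $\{y,w_1,w_2,w_3\}$ from the $7$-vertex subgraph induced by $V(C)\cup\{x,y,z\}$, with $\beta\geq 14$, $\lambda=4$; this last configuration is what generates the new constraint~\eqref{lpc:sep4-three-3v}, $4-7a+14b\geq 0$.

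The concrete gap: your hard-case plan has $L=V(C)\cup\{u_1,u_2\}\cup\{x\}$ of size $\alpha=7$ with $\lambda=3$ collected, which would demand a constraint of the form $3-7a+\beta b\geq 0$. With the paper's optimizing values $a=\tfrac{25}{27},b=\tfrac{5}{27}$ this forces $\beta\geq 19$; but a degree count on your $L$ (three $3$-vertices, one $4$-vertex, and three vertices of degree at least $3$, with at least seven internal edges already inside $L$) gives only $\beta$ in the low-to-mid teens, so the constraint would fail and the argument would not close. The missing ingredient is a fourth collectible vertex, and it is exactly the edges $xy,yz$ extracted from Claim~\ref{clm:3-4-adj} that make the $4$-vertex collection $\{y,w_1,w_2,w_3\}$ acyclic after deletion. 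Your plan also conflates two roles for the outside vertex you adjoin to $L$: you say you want the non-$C$ neighbors of $w_1$ and $w_3$ in ``different components of $G\setminus L$,'' but one of them is being absorbed into $L$, so it is not in $G\setminus L$ at all --- the actual reason no cycle forms is that only one endpoint of the collected path has an external neighbor. Finally, you cite the constraint $3-5a+10b-c\geq 0$ for a $7$-vertex deletion, which does not match; the hard case requires the genuinely new constraint $4-7a+14b\geq 0$ to be added to $\lp$, which your proposal does not produce.
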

\begin{proof}
Let $C$ be a separating 4-cycle of $G$ that has at least three 3-vertices . By Claim~\ref{clm:sepC-four-3-vers}, $C$ must have exactly three 3-vertices. Let $w_1,w_2,w_3,w_4$ be vertices in the clock-wise order of $C$ such that $w_1,w_2,w_3$ are three 3-vertices. Let $x,y,z$ be the non-$C$ neighbors of $w_1,w_2,w_3$, respectively. Note that $x$ and $z$ may be the same vertex. We assume that $x$ is embedded outside $C$. By Claim~\ref{clm:3-4-adj}, two vertices $w_2$ and $x$ must have a non-$C$ common neighbor and two vertices $w_2$ and $z$ must also have a non-$C$ common neighbor. That implies  $xy$ and $yz$ are edges of $G$. By planarity, $y$ and $z$ must also be embedded outside $C$. Since $C$ is separating and $G$ is two-edge connected, two edges of $w_4$ must be embedded inside $C$. If $x$ and $z$ are the same vertex (see Figure~\ref{fig:sep-4cycle}(a)), then we can collect $\{w_1,w_2,w_3\}$ from the subgraph $H$ that is induced by $\{w_1,w_2,w_3,w_4,x\}$. By Observation~\ref{obs:meta-exclusion} with $L = H$ and $(\alpha,\beta,\gamma,\eta,\lambda) =(5,10,0,0,3)$, $3 - 5a + 10b $ must be negative, contradicting Inequality~\eqref{lpc:min-deg-3a}.
	
Thus, we can assume that $x$ and $z$ are two different vertices (see Figure~\ref{fig:sep-4cycle}(b)). If $x,y,z$ are 3-vetices, then we can collect $\{w_2,w_3,x,y\}$ from the subgraph $K$ of $G$ that is induced by $\{w_1,w_2,w_3, x,y,z\}$. Since $m(G\setminus K) = m(G) - 11$, by Observation~\ref{obs:meta-exclusion} with $L = K$ and $(\alpha,\beta,\gamma,\eta,\lambda) =(6,11,0,0,4)$, $4 - 6a + 11b  $ must be negative, contradicting Inequality~\ref{lpc:min-deg-3b}. Thus, at least one vertex in $\{x,y,z\}$ is a 4-vertex. Let $M$ be the subgraph induced by $\{w_1,w_2,w_3,w_4,x,y,z\}$. Observe that we can collect $\{y,w_1,w_2,w_3\}$ from $M$. Since $m(G\setminus M) \leq m(G) - 14$, by Observation~\ref{obs:meta-exclusion} with $L = M$ and $(\alpha,\beta,\gamma,\eta,\lambda) = (7,14,0,0,4)$, $4 - 7a + 14b $ must be negative.  Thus, we obtain contradiction by adding Inequality~\eqref{lpc:sep4-three-3v} to $\lp$.
\begin{equation} \label{lpc:sep4-three-3v}
4 - 7a + 14b \geq 0
\end{equation} 
\end{proof}
\begin{figure}[tbh]
  \centering
   \includegraphics[height=1.2in]{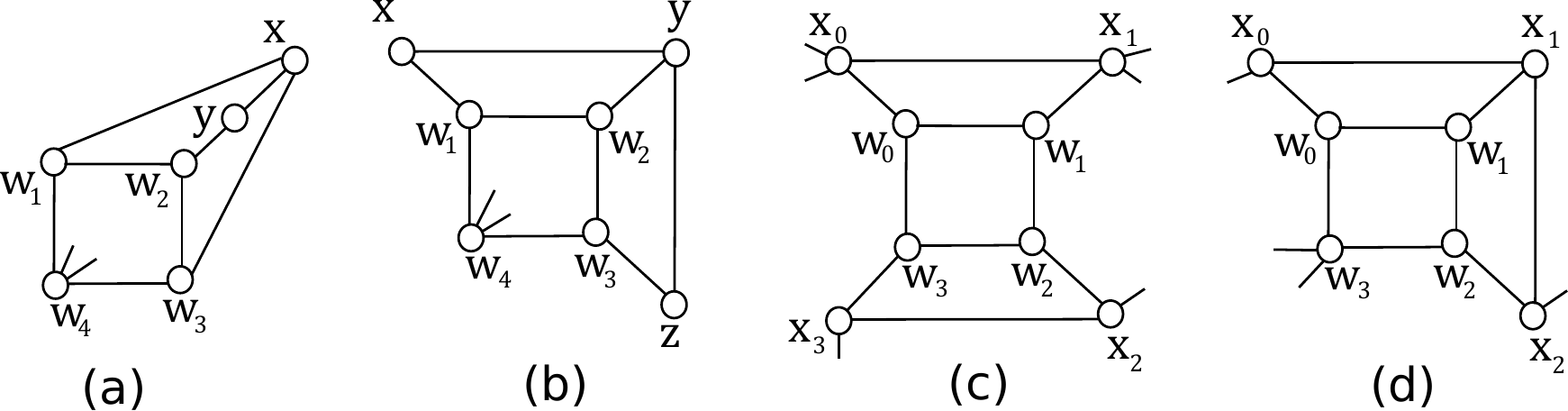}
      \caption{(a) A configuration in the proof of Claim~\ref{clm:sepC-thee-3-vers} when $x= z$ (b) A configuration in the proof of Claim~\ref{clm:sepC-thee-3-vers} when $x \not= z$ (c) A configuration in the proof of Lemma~\ref{lm:no-4face-4-4vers} (d) A configuration in the proof of Lemma~\ref{lm:no-4face-3-4vers}}
  \label{fig:sep-4cycle}
\end{figure} 

\begin{claim} \label{clm:sepC-atmost-one-deg-3}
Any separating $4$-cycle of $G$ must have at most one 3-vertex. 
\end{claim}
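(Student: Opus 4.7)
The plan is to argue by contradiction. Assume $C=w_1w_2w_3w_4$ is a separating $4$-cycle with more than one $3$-vertex. By Claim~\ref{clm:sepC-thee-3-vers} it has at most two, so it has exactly two. By Claim~\ref{clm:C-non-adj-deg-3}(i) the two $3$-vertices are adjacent and their non-$C$ edges lie on a common side of $C$; relabelling, I assume $w_1,w_2$ are the $3$-vertices with non-$C$ neighbours $x_1,x_2$ respectively, and both non-$C$ edges $w_1x_1,w_2x_2$ are embedded outside $C$. The other two vertices $w_3,w_4$ are $4$-vertices. Since $C$ is separating and, by Claim~\ref{clm:2-edge-conn} and Lemma~\ref{lm:min-deg-3}, $G$ is $2$-edge connected with $\delta(G)\ge 3$, the interior of $C$ contains at least one non-$C$ edge incident to $C$, and every such edge goes to $w_3$ or $w_4$.

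First I would extract the forced local structure outside $C$. Apply Claim~\ref{clm:3-4-adj} to the $3$-vertex $w_1$ and its $4$-neighbour $w_4$: its remaining neighbours are $w_2$ and $x_1$, which must share a common neighbour other than $w_1$. Since $N(w_2)=\{w_1,w_3,x_2\}$ and $x_1\ne w_1$ (and $x_1\ne w_2$, else $w_1w_2x_1$ is a triangle), this common neighbour is $x_2$ or $w_3$. Note also that $x_1\ne x_2$, for otherwise $w_1w_2x_1$ would be a triangle. Symmetrically, applying Claim~\ref{clm:3-4-adj} to $w_2$ and $w_3$ forces either $x_1x_2\in E(G)$ or $x_2w_4\in E(G)$. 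Since all of $x_1,x_2$ and the relevant edges lie outside $C$, planarity leaves essentially two configurations: (a) $x_1x_2\in E(G)$, or (b) $x_1x_2\notin E(G)$ and the edges $x_1w_3,\,x_2w_4$ both exist and lie outside~$C$.

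Next I would handle each configuration by taking $H$ to be the subgraph of $G$ induced by $V(C)\cup\{x_1,x_2\}$ together with a non-$C$ neighbour of $w_3$ or $w_4$ from inside $C$, and invoke Observation~\ref{obs:meta-exclusion}. Since $H$ is connected and $\deg_G(H)$ will be small, Claim~\ref{clm:meta-cub-excl} gives $p(G\setminus H)=q(G\setminus H)=0$, so only $\lambda-\alpha a+\beta b\ge 0$ must fail. In configuration (a) the $6$-vertex set $\{w_1,w_2,w_3,w_4,x_1,x_2\}$ carries at least $11$ incident edges of $G$ (the four $C$-edges, $w_1x_1,w_2x_2,x_1x_2$, and at least one inside-edge at each of $w_3,w_4$, plus the remaining degree), and we can collect $\{w_1,w_2,x_1\}$ or $\{w_1,w_2,x_2\}$, contradicting Inequality~\eqref{lpc:min-deg-3b}. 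In configuration (b) the analogous count on the $6$-vertex set $\{w_1,w_2,w_3,w_4,x_1,x_2\}$ gives $\beta\ge 11$ (using both outside edges $x_1w_3,x_2w_4$ and the mandatory inside edges), and the same collection size contradicts Inequality~\eqref{lpc:min-deg-3b} once more (or, if $\beta$ happens to be smaller, Inequality~\eqref{lpc:min-deg-3a} applied to a carefully chosen $5$-vertex subgraph).

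The main obstacle I expect is configuration (b): there $x_1$ and $x_2$ are not forced to be adjacent, and one must rule out parasitic coincidences (such as $x_1$ or $x_2$ lying on the inside of $C$ via an extra identification, or one of them coinciding with a non-$C$ neighbour of $w_3$ or $w_4$) and carefully account for the extra outside edges $x_1w_3,x_2w_4$ to guarantee that $\beta=m(G)-m(G\setminus H)$ is large enough. The delicate bookkeeping is to track these planarity-driven edges between $\{x_1,x_2\}$ and $\{w_3,w_4\}$ while maintaining that at least one non-$C$ edge is forced inside $C$ at each of $w_3,w_4$, so that the edge count crosses the threshold needed to contradict one of the previously-added linear constraints.
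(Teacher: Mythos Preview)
Your proposal has a genuine gap at the numerical level. In configuration~(a) you remove the $6$-vertex set $H=\{w_1,w_2,w_3,w_4,x_1,x_2\}$, claim $\beta\ge 11$, and propose to collect three vertices. That yields $(\alpha,\beta,\lambda)=(6,11,3)$ in Observation~\ref{obs:meta-exclusion}, so the quantity that ``must be negative'' is $3-6a+11b$. But $3-6a+11b\ge 0$ is not implied by any constraint in $\lp$; at the intended optimum $a=\tfrac{25}{27},\,b=\tfrac{5}{27}$ one computes $3-6a+11b=-\tfrac{14}{27}<0$, so no contradiction arises. Referencing Inequality~\eqref{lpc:min-deg-3b} ($4-6a+9b\ge 0$) does not help, since $3-6a+11b=(4-6a+9b)+(2b-1)$ and $2b-1<0$ here. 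Configuration~(b) suffers from the same arithmetic. There is also a structural issue: collecting $\{w_1,w_2,x_1\}$ is not justified, because if $\deg(x_1)=4$ with two neighbours outside $H$, those neighbours may lie in the same tree of the forest of $G\setminus H$, and adding $x_1$ closes a cycle.

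The paper avoids both problems by first invoking part~(ii) of Claim~\ref{clm:C-non-adj-deg-3}, which you omit. That claim (applied with the opposite $3$-vertex) forces the two non-$C$ edges of each of $w_3,w_4$ to lie on the \emph{same} side of $C$; since $C$ is separating, one of them---say $w_3$---has both its non-$C$ edges inside. This immediately eliminates your configuration~(b), because Claim~\ref{clm:3-4-adj} applied to $w_1$ then forces $x_1x_2\in E(G)$. More importantly, it provides the inside/outside separation that makes a \emph{smaller} deletion work: removing only $\{w_1,w_2,x_2,w_4\}$ (when $x_2$ is a $4$-vertex) or $\{x_1,x_2,w_1,w_2,w_4\}$ (when $x_2$ is a $3$-vertex) leaves $w_3$ with only interior neighbours, so the remaining graph is disconnected across $C$, and one can safely collect $\{w_1,w_2\}$ respectively $\{x_2,w_2,w_1\}$. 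These give $(\alpha,\beta,\lambda)=(4,10,2)$ and $(5,10,3)$, which contradict Inequalities~\eqref{lpc:2v-4v} and~\eqref{lpc:min-deg-3a} respectively.
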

\begin{proof}
Let $w_1w_2w_3w_4$ be a separating $4$-cycle, denoted by $C$, of $G$ that has at least two 3-vertices. By Claim~\ref{clm:sepC-thee-3-vers}, $C$ has exactly two $3$-vertices.  By (i) of Claim~\ref{clm:C-non-adj-deg-3}, we assume that $w_1,w_2$ are two $3$-vertices of $C$ and their non-$C$ edges are embedded outside $C$. Since $C$ is separating and $G$ is two-edge connected, at least two non-$C$ edges of $C$ must be embedded inside $C$. By (ii) of Claim~\ref{clm:C-non-adj-deg-3}, two non-$C$ edges of any 4-vertex of $C$ must be embedded in the same side of $C$. We assume~w.l.o.g that two non-$C$ edges of $w_3$ are embedded inside $C$. Let $u$ and $v$ be non-$C$ neighbors of $w_1$ and $w_2$, respectively. By Claim~\ref{clm:3-4-adj}, $v$ must be a common neighbor of $u$ and $w_2$.  If $v$ is a 4-vertex, let $H$ be the subgraph of $G$ induced by $\{w_1,w_2,v,w_4\}$. Observe that we can collect $\{w_1,w_2\}$ from $H$. Since $v$ may be a neighbor of $w_4$,  $m(G\setminus H) \leq m(G) - 10$. By Observation~\ref{obs:meta-exclusion} with $L = H$ and $(\alpha,\beta,\gamma,\eta,\lambda) =(4,10,0,0,2)$, $2 - 4a + 10 b $ must be negative, contradicting Inequality~\eqref{lpc:2v-4v}. Thus, we can assume that $v$ is a 3-vertex. Let $K$ be a subgraph of $G$ induced by $\{u,v,w_1,w_2,w_4\}$. We can collect $\{v,w_2,w_1\}$ from $K$. Since $v$ may be a neighbor of $w_4$,  $m(G\setminus K) \leq m(G) - 10$. By Observation~\ref{obs:meta-exclusion} with $L = K$ and $(\alpha,\beta,\gamma,\eta,\lambda) =(5,10,0,0,3)$, $3 - 5a + 10b $ must be negative, contradicting Inequality~\eqref{lpc:min-deg-3a}.
\end{proof}

\subsection{Excluding a 4-face with at least one 3-vertex}\label{subsec:ex-4face-3vers}

\subsubsection{Excluding a 4-face with exactly four 3-vertices}

In this subsection, we denote $C = w_0w_1w_2w_3$ to be a 4-face of $G$ such that each $w_i$ is a 3-vertex, $0 \leq i \leq 3$. Let $X = \{x_0,x_1,x_2,x_3\}$ where each $x_i$ is the non-$C$ neighbor of $w_i$. All indices in this subsection are mod $4$ and to simplify the presentation, we write $w_j$ ($x_j$) instead of writing $w_{j \bmod 4}$ ($x_{j\bmod 4}$). 

\begin{claim}\label{clm:4-face-4-3-vers-sub}
Vertices in $X$ are pairwise distinct and $x_j$ is not adjacent to $x_{j+2}$ for any $j \in \{0,1\}$. 
\end{claim}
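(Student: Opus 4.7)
The plan is to split the proof along the statement's two assertions. For pairwise distinctness of $x_0, x_1, x_2, x_3$, I first handle the adjacent case $x_j = x_{j+1}$: the single vertex $x_j=x_{j+1}$ together with $w_j, w_{j+1}$ forms a triangle (via edges $x_j w_j, w_j w_{j+1}, x_{j+1} w_{j+1}$), contradicting triangle-freeness. For the opposite case, WLOG $x_0 = x_2 = x$, I consider the 4-cycle $C_1 = w_0 w_1 w_2 x$. If $C_1$ were a face, then $C$ and $C_1$ would be two distinct faces both bounded at $w_1$ by the edges $\{w_0 w_1, w_1 w_2\}$; since this pair of edges bounds a unique face in a planar embedding, $w_1$'s third edge would have to coincide with an existing one, forcing $\deg(w_1) = 2$ and contradicting Lemma~\ref{lm:min-deg-3}. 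Hence $C_1$ is separating with the three 3-vertices $w_0, w_1, w_2$ and $\deg(x) \in \{3,4\}$ (by Claim~\ref{clm:no-deg-5}), giving either 4 or 3 three-vertices on a separating 4-cycle, contradicting Claim~\ref{clm:sepC-four-3-vers} or Claim~\ref{clm:sepC-thee-3-vers} respectively.

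For non-adjacency, suppose $x_0 \sim x_2$ (the case $x_1 \sim x_3$ being symmetric by the rotational symmetry of $C$). Let $H$ be the connected subgraph induced by $\{w_0, w_1, w_2, w_3, x_0, x_2\}$, which has $|V(H)|=6$ and $|E(H)|=7$. I plan to invoke Observation~\ref{obs:meta-exclusion} with $L=H$: Claim~\ref{clm:meta-cub-excl} gives $p(G\setminus H) = q(G\setminus H) = 0$. Since $w_0, w_2$ are 3-vertices with all neighbors inside $H$, the path $X = \{w_0, w_1, w_2\}$ is safely collectable (only the single external edge $w_1 x_1$ is incident to $X$), yielding $\lambda=3$, $\alpha=6$, and $\beta = |E(H)| + \deg_G(H) = 5 + d_0 + d_2$ where $d_0, d_2 \in \{3,4\}$.

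The main obstacle is that the resulting inequality $3 - 6a + (5 + d_0 + d_2)b \geq 0$ is not directly listed among the constraints of Linear Program~\ref{eq:our-linear}, particularly in the worst case $d_0 = d_2 = 3$ giving $\beta = 11$. I would address this via refined case analysis on $(d_0, d_2)$. When $d_0 = d_2 = 3$, Claim~\ref{clm:3-4-adj} applied to the edge $x_0 y_0$ (with $y_0$ the third neighbor of $x_0$) should force $y_0$ itself to be a 3-vertex, because the only non-$x_0$ common neighbor of $x_0$'s other two neighbors $w_0, x_2$ is $x_0$ itself, ruling out $y_0$ being a 4-vertex. This structural information further constrains the planar embedding of $H$ and the two 5-faces $w_0 x_0 x_2 w_2 w_1$ and $w_0 x_0 x_2 w_2 w_3$ containing $x_1$ and $x_3$. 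I expect to then either enlarge the collectable set to size 4 by exploiting the additional non-adjacencies, or locate a separating 4-cycle such as $w_0 w_1 x_1 x_0$ (in the sub-case $x_0 \sim x_1$) with three or four 3-vertices, yielding a contradiction via Claim~\ref{clm:sepC-thee-3-vers} or Claim~\ref{clm:sepC-four-3-vers}. The most delicate sub-case is the generic one where no such extra $x_i x_j$ adjacencies are present, where careful exploitation of the planar arrangement around $x_0 x_2$ is expected to be required.
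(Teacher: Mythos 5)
Your treatment of the distinctness part is sound and in spirit the same as the paper's: rule out $x_j=x_{j+1}$ by triangle-freeness, and rule out $x_j=x_{j+2}$ by exhibiting a separating $4$-cycle with too many $3$-vertices (you use Claims~\ref{clm:sepC-four-3-vers}/\ref{clm:sepC-thee-3-vers}, the paper uses the stronger Claim~\ref{clm:sepC-atmost-one-deg-3}; either works). The extra discussion of why $C_1$ is not a face is a reasonable gloss on a point the paper leaves implicit.

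The non-adjacency part, however, has a genuine gap that you have correctly sensed but not resolved, and I do not think your proposed route can close it. With $H=V(C)\cup\{x_0,x_2\}$ you have $\alpha=6$, $\lambda=3$, and $\beta=5+d_0+d_2$ where $d_0,d_2\in\{3,4\}$, so $\beta\le 13$ in \emph{every} case, not only the $d_0=d_2=3$ case you single out. The resulting constraint $3-6a+\beta b\ge 0$ with $\beta\le 13$ is \emph{not} implied by Linear Program~\ref{eq:our-linear}: at the point $(a,b)=(\tfrac{25}{27},\tfrac{5}{27})$ used to extract the $\tfrac{5n}{9}$ bound we get $3-6a+13b=-\tfrac{4}{27}<0$. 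So your case analysis on $(d_0,d_2)$ cannot produce a feasible constraint for any choice; you would have to either collect $4$ vertices from $H$ (hard to justify safely, since external edges at $x_0$ or $x_2$ can close a cycle in $G\setminus H$) or change $H$ altogether. The musings about Claim~\ref{clm:3-4-adj} forcing $y_0$ to be a $3$-vertex, and the possible separating $4$-cycle $w_0w_1x_1x_0$ when $x_0\sim x_1$, are not developed into arguments and do not address the generic sub-case you yourself flag as delicate.

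The paper resolves this by splitting on $\deg(x_0)$ and using constraints with $\alpha=5$ rather than $\alpha=6$. If $x_0$ is a $3$-vertex, it takes $H=\{w_0,w_1,w_2,w_3,x_2\}$ (collecting $\{w_1,w_2,w_3\}$; this is safe because the path $w_0$--$x_0$--$x_2$--$w_2$ together with $C$ separates $x_1$ from $x_3$, and $x_0$ has degree $1$ in $G\setminus H$, so $x_1$ and $x_3$ cannot lie in the same component of $G\setminus H$), yielding the constraint $3-5a+10b\ge 0$, which is implied. If $x_0$ is a $4$-vertex, the paper uses a contraction-style argument: delete $\{x_0,w_0,w_1,w_2,w_3\}$ \emph{and add the edge} $x_1x_3$, then collect $\{w_0,w_1,w_3\}$; the net removal of $5$ vertices and $10$ edges again gives a $3-5a+10b$-type constraint (with $c,d$ corrections covered by~\eqref{seq:T-3-45-ex} and~\eqref{seq:T-3-45-ex1}). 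This edge-addition trick is exactly the missing idea in your outline.
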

\begin{proof}
To prove that vertices in $X$ are pairwise distinct, we only need to prove that $x_j \not= x_{j+2}$ since $G$ is triangle-free. If $x_0 = x_2$, then $w_0w_1w_2x_0$ is a  separating $4$-cycle with at least three 3-vertices. If $x_1 = x_3$, then $w_0w_1x_1w_3$ is a  separating $4$-cycle with at least three 3-vertices. Both cases contradict  Claim~\ref{clm:sepC-atmost-one-deg-3}.

We now show that  $x_j$ and $x_{j+2}$ are non-adjacent. By symmetry, it suffices to show the non-adjacency of $x_0$ and $x_2$. Suppose otherwise. By planarity, $x_1$ and $x_3$ cannot be adjacent and if they have a common neighbor, it must be $x_0$ or $x_2$. Since $G$ is triangle-free, both $\{x_0,x_2\}$ cannot be common neighbors of $x_1$ and $x_3$. We assume~w.l.o.g that $x_2$ is a non-common neighbor of $x_1$ and $x_3$. We consider two cases:

\begin{description}
\item[Case 1] Vertex $x_0$ is a 3-vertex. Then, $x_1$ and $x_3$ has no common neighbor. Let $H$ be the subgraph induced by $\{w_0,w_1,w_2,w_3,x_2\}$.  We can collect $\{w_1,w_2,w_3\}$ from $H$. By Claim~\ref{clm:meta-cub-excl}, $p(G\setminus K) = q(G\setminus K) = 0$. Since $m(G\setminus K)\leq m(G)-10$, by Observation~\ref{obs:meta-exclusion} with $L = K$ and $(\alpha,\beta,\gamma,\eta,\lambda) =(5,10,0,0,3)$, $3 - 5a + 10b $ must be negative, contradicting Inequality~\eqref{lpc:min-deg-3a}.

\item[Case 2] Vertex $x_0$ is a 4-vertex.  Let $G'$ be the graph obtained by removing $\{x_0,w_0,w_1,w_2,w_3\}$ from $G$ and adding edge $x_1x_3$. $G'$ is triangle-free since common neighbors of $x_1$ and $x_3$ are all removed. By Claim~\ref{clm:meta-cub-excl}, $G\setminus C$ contains no $Q_3^{1-}$ subgraph and $T_6$ component. Thus, $p(G') + q(G') \leq 1$. Let $F'$ be the largest induced forest in $G'$. Observe that we can add $\{w_0,w_1,w_3\}$ to $F'$ to get an induced forest in $G$. Since  $G'$ has strictly smaller order than $G$, $F'$ has order at least $an(G') - bm(G') - cp(G') - dq(G')$.  Since $n(G') = n(G) - 5$ and $m(G') \leq m(G) - 10$,  by adding $\{w_0,w_1,w_3\}$ to $F'$, we get an induced forest in $G$ of order at least:
\begin{equation*}
\begin{split}
an(G') - bm(G') - cp(G') - dq(G') &\geq  an(G) - bm(G) + 3 - 5a + 10b - cp(G') - dq(G')
\end{split}
\end{equation*}
By Inequality~\eqref{lpc:T-6-4-5a} and Inequality~\eqref{lpc:T-6-4-5b}, $3 - 5a + 10b - c$ and $3 - 5a + 10b - d$ are both non-negative. Since $p(G')+ q(G') \leq 1$, $3 - 5a + 10b - cp(G') - dq(G')$ is non-negative. Thus, $G$ has an induced forest of order at least $an(G) - bm(G)$, contradicting that $G$ is a counter-example.  	 
\end{description}
\end{proof}

\begin{claim} \label{clm:two-cons-edges-one-4-face}
 At least one of two edges $w_jw_{j+1}$ and $w_{j+1}w_{j+2}$, for any $j$ in $\{0,1,2,3\}$, is not on the boundary of a $5^{+}$-face.
\end{claim}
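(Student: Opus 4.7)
The plan is to argue by contradiction: suppose both $w_jw_{j+1}$ and $w_{j+1}w_{j+2}$ lie on $5^+$-faces other than $C$. By the symmetry of $C$ take $j=0$, and denote those $5^+$-faces by $F_1$ (on the non-$C$ side of $w_0w_1$) and $F_2$ (on the non-$C$ side of $w_1w_2$). The first step is to rule out the adjacencies $x_0 \sim x_1$ and $x_1 \sim x_2$. Indeed, if $x_0x_1 \in E(G)$, then the 4-cycle $w_0 w_1 x_1 x_0$ cannot coincide with $F_1$ (whose length is at least $5$), so it must be separating, and it contains the two 3-vertices $w_0, w_1$, contradicting Claim~\ref{clm:sepC-atmost-one-deg-3}; the argument for $F_2$ is symmetric.

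Next I plan to apply Observation~\ref{obs:meta-exclusion} with $H$ equal to the induced subgraph of $G$ on $V(C) \cup \{x_1\}$. Since $x_1$ is adjacent to $w_1$ only among $V(C)$ (as $x_1 \not\sim w_3$ by Claim~\ref{clm:4-face-4-3-vers-sub} and $x_1 \not\sim w_0, w_2$ would force a triangle), $H$ has $5$ vertices and exactly $5$ edges, with cut of size $3 + (\deg_G(x_1)-1) \geq 5$. By Claim~\ref{clm:meta-cub-excl} (applicable because $H$ is connected), $p(G\setminus H) = q(G\setminus H) = 0$. The target is to exhibit a set of three vertices of $H$ that can be collected, so that the resulting inequality $3 - 5a + \big(5 + \deg_G(x_1) + 2\big) b \geq 0$ contradicts~\eqref{lpc:min-deg-3a}. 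The direct choices of an induced forest of size $3$ in $H$ fail because every such forest has two between vertices in the same component, or includes $x_1$ whose remaining two non-$H$ edges destroy the forest guarantee.

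To surmount this, the plan is to enlarge $H$ using the $5^+$-face structure. Since $F_1$ is a $5^+$-face, $x_1$ has a neighbor $a_1 \neq w_1$ lying on $F_1$; by the non-adjacency results above, $a_1 \notin \{x_0, x_2, x_3\}$ and by planarity $a_1 \notin V(C)$. Analogously $F_2$ provides a distinct neighbor $a_2$ of $x_1$. Adjoining $a_1, a_2$ (and $x_1$'s fourth neighbor, if $\deg_G(x_1)=4$) to $H$ turns $x_1$ into a non-between vertex of the enlarged set $\widetilde H$, so that a collected set built from an induced tree through $x_1$ (for instance $\{w_0, w_1, w_2, x_1\}$ together with $w_3$, after handling components) has at most one between vertex per component. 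Feeding the resulting $(\alpha,\beta,\gamma)$ into Observation~\ref{obs:meta-exclusion} yields an inequality stronger than~\eqref{lpc:min-deg-3a} or~\eqref{lpc:min-deg-3b}, producing the contradiction.

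I expect the main obstacle to be twofold. First, one must verify that $a_1$ and $a_2$ behave well: they might coincide with $x_1$'s fourth neighbor, or have many non-$\widetilde H$ edges of their own, in which case $\widetilde H$ must be enlarged further, with the potential appearance of $Q_3^{1-}$ subgraphs or $T_6$ components in $G \setminus \widetilde H$ controlled by Claim~\ref{clm:meta-cub-excl}. Second, a case split between $|F_i|=5$ (where $a_i \sim x_{2-i}$ on $F_i$, creating short paths between the $x$-vertices) and $|F_i|\geq 6$ is likely required: in the former case, Claim~\ref{clm:sepC-atmost-one-deg-3} together with Claim~\ref{clm:4-face-4-3-vers-sub} rules out the additional adjacencies that would otherwise obstruct the collection, while in the latter case the structure is already sufficiently sparse to conclude directly.
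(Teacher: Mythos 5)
Your opening step---ruling out the adjacencies $x_0\sim x_1$ and $x_1\sim x_2$ via the separating-4-cycle argument---is correct and in fact supplies the justification the paper leaves implicit for why its auxiliary graph is triangle-free. But after that the two proofs diverge, and your route has a genuine gap that you yourself flag without resolving. The core problem: with $H=G[V(C)\cup\{x_1\}]$, every candidate 3-vertex set you might try to collect contains two between vertices lying in the same tree (e.g.\ $w_0$ and $w_2$ in the path $w_0w_1w_2$), so adding it to an arbitrary forest of $G\setminus H$ can close a cycle through $x_0,x_2$. Your proposed fix of enlarging $H$ to $\widetilde H$ so that $x_1$ becomes non-between does not remove this obstruction: the collection $\{w_0,w_1,w_2,x_1\}$ still has $w_0$ and $w_2$ as between vertices in one tree, and nothing prevents $x_0$ and $x_2$ from lying in the same component of the ambient forest. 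Enlarging further brings in $a_1,a_2$ and potentially more vertices, but you never pin down which finite configuration of $\widetilde H$ actually produces a valid $(\alpha,\beta,\gamma,\eta,\lambda)$, and the worry you raise (coincidences and extra outgoing edges of $a_1,a_2$) is exactly where the argument would need to bottom out and doesn't.

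The paper avoids this entirely with a contraction/subdivision trick rather than a collection argument. It forms $G'$ from $G$ by deleting $\{w_0,w_2,w_3\}$ and \emph{adding} the edges $x_0w_1$ and $w_1x_2$, which is legitimate precisely because the $5^+$-face hypothesis (your first step) makes $x_0,x_1,x_2$ pairwise non-adjacent, so $G'$ stays triangle-free and planar. An induced forest $F'$ of $G'$ then lifts to an induced forest of $G$ of order $|F'|+2$ by putting $w_0$ and $w_2$ back, because in $G$ this just subdivides the artificial edges $x_0w_1$ and $w_1x_2$ if they happen to lie in $F'$. The between-vertex difficulty never arises: the extra edges in $G'$ already record the connectivity that $w_0$ and $w_2$ will restore, so $F'$ cannot ``cheat.'' The remaining work in the paper is only to control $p(G')$ and $q(G')$: $x_3$ is the only possible 2-vertex of $G'$, so $q(G')=0$, and a short case split (introducing inequality~\eqref{lpc:2e-one-4f}) handles the possibility that $G'$ acquires a $Q_3^{1-}$ subgraph through $w_1$. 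Your proposal, as written, is a different and incomplete approach; to make it work you would essentially need to rediscover the edge-addition idea or something equivalent, because enlarging $H$ alone does not control how $x_0$ and $x_2$ sit inside the forest of $G\setminus\widetilde H$.
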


\begin{proof}
Suppose that there exists $j \in \{0,1,2,3\}$ such that $w_jw_{j+1}$ and $w_{j+1}w_{j+2}$ are on the boundaries of $5^+$-faces. We assume~w.l.o.g that $j = 0$. Let $G'$ be the graph obtained from $G$ by removing $\{w_0,w_2,w_3\}$ and adding two edges $x_0w_1, w_1x_2$. We observe that, by construction, $x_3$ is the only possible 2-vertex of $G'$. Thus, $G'$ contains no $T_6$ component. We consider two cases:
\begin{description}
\item[Case 1]Graph $G'$ contains no $Q_3^{1-}$ subgraph. Then $G'$ has an induced forest $F'$ of order at least $an(G') - bm(G')$. Since $n(G') = n(G)-3$ and $m(G') = m(G)-5$, by adding $\{w_0,w_2\}$ to $F'$, wet get an induced forest of $G$ of order at least:
	\begin{equation*}
	a(n(G)-3) - b(m(G)-5) + 2 = an(G) -bm(G) + 2 -3a + 5b
	\end{equation*}
Since $ 2 -3a + 5b \geq 0$ by Inequality~\eqref{lpc:2v-2vb}, $G$ has an induced forest of order  at least $an(G) - bm(G)$, contradicting that $G$ is a counter-example. 
\item[Case 2]Graph $G'$ contains at least one $Q_3^{1-}$ subgraph. By Claim~\ref{clm:meta-cub-excl}, $G-\{w_0,w_2,w_3\}$ contains no $Q_3^{1-}$ subgraph. Thus, any $Q_3^{1-}$ subgraph of $G$ must contain $w_1$.  By Observation~\ref{ob:Q-3-dis}, $G'$ has exactly one $Q_3^{1-}$ subgraph.  If $G'$ contains $Q_3$, then the subgraph of $G$ induced by $V(Q_3)\cup \{w_0,w_2,w_3\}$ has degree 1 in $G$, contradicting that $G$ is two-edge connected. Thus, we can assume that $G'$ contains a $Q_3^1$ subgraph $K$. Let $G'' = G'\setminus K$. We observe that $G''$ can also be obtained from $G$ by removing $V(K)\cup \{w_0,w_2,w_3\}$. Since $V(K) \cup \{w_0,w_{2},w_{3}\}$  induces a connected subgraph of $G$, $p(G'') = q(G'') = 0$ by Claim~\ref{clm:meta-cub-excl}. Thus, $G''$ has a forest $F''$ of order at least $an(G'') -bm(G'')$. By Lemma~\ref{lm:cublike-forest}, we can collect $5$ vertices from $K$ to obtain an induced forest $F'$ of $G'$ of order at least  $an(G'') -bm(G'') + 5$. By adding $\{w_0,w_2\}$ to $F'$, we get an induced forest $F$ of $G$ of order at least $an(G'') -bm(G'') + 7$. Since $n(G'') = n(G)-11$ and $m(G'') = m(G)-18$, $F$ has order at least:
\begin{equation*}
an(G) -bm(G) + 7 - 11a + 18b
\end{equation*}
We obtain contradiction by adding Inequality~\eqref{lpc:2e-one-4f} to $\lp$.
\begin{equation} \label{lpc:2e-one-4f}
7 - 11a + 18b  \geq 0
\end{equation}
\end{description} 
\end{proof}

\begin{claim}\label{clm:2-4-vers-in-U}
At least one of two vertices $x_j,x_{j+2}$ is a 3-vertex, for any $j$ in $\{0,1\}$.
\end{claim}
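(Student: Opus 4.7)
The plan is to argue by contradiction: suppose both $x_0$ and $x_2$ are $4$-vertices, and mirror Case~2 of the proof of Claim~\ref{clm:4-face-4-3-vers-sub}. I would set $H = \{w_0, w_1, w_2, w_3, x_0\}$, a connected subgraph on $5$ vertices. Using the $4$-vertex assumption on $x_0$ together with the non-adjacencies established in Claim~\ref{clm:4-face-4-3-vers-sub} (namely $x_0 \not\sim x_2$, and $x_0 \not\sim w_1, w_2, w_3$ from triangle-freeness together with the fact that $w_2$'s neighbor set is $\{w_1, w_3, x_2\}$), the number of edges of $G$ incident to $H$ is exactly $5 + 6 = 11$.

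First I would handle the main case: assume $x_1$ and $x_3$ have no common neighbor in $G - H$. Form $G' = (G - H) + x_1x_3$; this graph is triangle-free. By Claim~\ref{clm:meta-cub-excl} applied to the connected set $H$, $G - H$ contains no $Q_3^{1-}$ subgraph and no $T_6$ component, and adding a single edge can create at most one new such structure (which must contain that edge), so $p(G') + q(G') \leq 1$. The minimality of $G$ yields an induced forest $F'$ of $G'$ of order at least $a n(G') - b m(G') - c p(G') - d q(G')$, with $n(G') = n(G) - 5$ and $m(G') = m(G) - 10$. I would extend $F'$ by the three vertices $\{w_0, w_1, w_3\}$; validity follows exactly as in Case~2 of Claim~\ref{clm:4-face-4-3-vers-sub}: if both $x_1, x_3 \in V(F')$, they are joined in $F'$ only by the edge $x_1x_3$, which is absent in $G$, so the path $x_1 - w_1 - w_0 - w_3 - x_3$ reconnects them tree-wise. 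The resulting induced forest $F$ of $G$ has order at least $a n(G) - b m(G) + (3 - 5a + 10b - c\, p(G') - d\, q(G'))$, and since $p(G') + q(G') \leq 1$ the parenthesized quantity is non-negative by constraints~\eqref{seq:T-3-45-ex} and~\eqref{seq:T-3-45-ex1} of Linear Program~\ref{eq:our-linear}, giving the desired contradiction.

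For the remaining sub-case, where $x_1$ and $x_3$ share a common neighbor in $G - H$, I would first attempt the symmetric reduction with $H' = \{w_0, w_1, w_2, w_3, x_2\}$: since $x_2$ is also a $4$-vertex by assumption, the same edge count and extension argument go through provided $x_1, x_3$ have no common neighbor in $G - H'$. When both reductions simultaneously fail, the degenerate scenario in which the only common neighbors of $x_1, x_3$ are precisely $\{x_0, x_2\}$ forces $\{w_0, \ldots, w_3, x_0, \ldots, x_3\}$ to induce a copy of $Q_3$; since $x_0, x_2$ are $4$-vertices contributing only one external edge each, a direct count shows $\deg_G$ of this $Q_3$ is at most $4$, contradicting Claim~\ref{clm:exclue-Q-3-5}. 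The residual case, where $x_1$ and $x_3$ have a common neighbor $y^* \notin C \cup \{x_0, x_2\}$, is the main obstacle: both natural edge-addition reductions are blocked, so I would have to rule out this configuration by a case analysis on the degree and planar position of $y^*$, combined with the additional 4-face structure around $C$ provided by Claim~\ref{clm:two-cons-edges-one-4-face}, to obtain the final contradiction.
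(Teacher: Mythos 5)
The paper's own proof is much simpler and avoids all the case analysis you anticipate. It takes $H$ to be the $6$-vertex subgraph induced by $V(C)\cup\{x_j,x_{j+2}\}$, collects the three vertices $\{w_j,w_{j+1},w_{j+2}\}$ (these form a path whose only edge to the outside is $w_{j+1}x_{j+1}$, so no cycle is created), and observes that, since $x_j,x_{j+2}$ are $4$-vertices and $x_j\not\sim x_{j+2}$ by Claim~\ref{clm:4-face-4-3-vers-sub}, removing $H$ destroys at least $14$ edges. Applying Observation~\ref{obs:meta-exclusion} with $(\alpha,\beta,\gamma,\eta,\lambda)=(6,14,0,0,3)$ and Claim~\ref{clm:meta-cub-excl} (which gives $p=q=0$ after deleting a connected set) yields the constraint $3-6a+14b\geq 0$, a nonnegative combination of constraints already in the LP, and that's the whole proof.

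Your approach is genuinely different: you delete only $5$ vertices and try to contract/add the edge $x_1x_3$, mirroring Case~2 of Claim~\ref{clm:4-face-4-3-vers-sub}. Your main case is argued correctly, but you have a real gap: you explicitly acknowledge a ``residual case'' where $x_1,x_3$ have a common neighbor $y^*\notin C\cup\{x_0,x_2\}$ blocking both edge-additions, and you only sketch that you ``would have to rule out this configuration by a case analysis'' — that analysis is never carried out. So as written the proof is incomplete precisely where the hard work would be. The paper's deletion of the full set $\{w_0,\ldots,w_3,x_j,x_{j+2}\}$ sidesteps this entirely: no new edge is added, hence no triangle-freeness or common-neighbor issue ever arises, at the modest cost of an extra vertex/edge accounting that is comfortably covered by the LP constraints.
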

\begin{proof}

Suppose that $x_j$ and $x_{j+2}$ are two 4-vertices for some $j \in \{0,1\}$. Let $H$ be the graph induced by $V(C) \cup \{x_j,x_{j+2}\}$. Observe that we can collect $\{w_j,w_{j+1}, w_{j+2}\}$ from $H$. Since $m(G\setminus H) \leq m(G)- 14$, by Observation~\ref{obs:meta-exclusion} with $L = H$ and $(\alpha,\beta,\gamma,\eta,\lambda) =(6,14,0,0,3)$, $3 - 6a + 14b $ must be negative.  Thus, we obtain contradiction by adding Inequality~\eqref{lpc:2v-4vX} to $\lp$.
\begin{equation} \label{lpc:2v-4vX}
3 - 6a + 14b \geq 0
\end{equation}
\end{proof}

\begin{lemma}\label{lm:no-4face-4-4vers}
Graph $G$ has no 4-face with four 3-vertices.
\end{lemma}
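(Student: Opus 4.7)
The proof is by contradiction: assume $G$ contains a $4$-face $C = w_0w_1w_2w_3$ with four $3$-vertices, and let $x_i$ be the unique non-$C$ neighbor of $w_i$. By Claim~\ref{clm:4-face-4-3-vers-sub} the $x_i$ are pairwise distinct with $x_j \not\sim x_{j+2}$, and by Claim~\ref{clm:2-4-vers-in-U} at most two of the $x_i$ are $4$-vertices (moreover coming one from each pair $\{x_0,x_2\}$, $\{x_1,x_3\}$). My first step is to translate Claim~\ref{clm:two-cons-edges-one-4-face} into a statement about adjacencies among the $x_i$: on the far side of each edge $w_jw_{j+1}$ of $C$, since $G$ is triangle-free and $w_j,w_{j+1}$ each have only one non-$C$ neighbor, the bounding face has length exactly $4$ iff $x_jx_{j+1}$ is an edge (the face is then $w_jw_{j+1}x_{j+1}x_j$), and length $\geq 5$ otherwise. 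Thus Claim~\ref{clm:two-cons-edges-one-4-face} forbids two consecutive absences in the cyclic sequence $x_0x_1, x_1x_2, x_2x_3, x_3x_0$, leaving three cases: all four $x$-edges present, exactly one absent, or two non-adjacent ones absent.

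The principal case is that all four $x$-edges are present. Then the $8$-vertex induced subgraph $H$ on $V(C) \cup \{x_0,x_1,x_2,x_3\}$ is isomorphic to $Q_3$. Each $w_i$ contributes nothing to $\deg_G(H)$ since it is a $3$-vertex with all three neighbors already in $H$, while each $x_i$ has exactly three neighbors in $H$ and so contributes $\deg_G(x_i) - 3 \in \{0,1\}$. By Claim~\ref{clm:2-4-vers-in-U} at most two $x_i$ are $4$-vertices, so $\deg_G(H) \leq 2$, making $H$ a $Q_3^{2-}$ subgraph of $G$ and contradicting Claim~\ref{clm:exclue-Q-3-2}.

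To finish, the remaining two cases must be ruled out. The plan is to enlarge $H$ by adjoining one or two vertices from the $5^+$-face(s) opposite the missing $x$-edge(s), then either exhibit a $Q_3^{5-}$ subgraph (contradicting Claim~\ref{clm:exclue-Q-3-5}) or a $T_6^{5-}$ whose between vertices all lie on a common face (contradicting Claim~\ref{clm:exclude-T-6-4-5}), or else apply Observation~\ref{obs:meta-exclusion} directly to the enlarged connected subgraph $L$ with a carefully chosen collectible set; Claim~\ref{clm:meta-cub-excl} ensures $p(G\setminus L) = q(G\setminus L) = 0$ since $L$ will be connected. The main obstacle is the bookkeeping in the one-edge-missing case: here the $8$-vertex subgraph is $Q_3$ minus one edge, with external degree as small as $2$, and the linear constraint $5-8a+13b \geq 0$ means one cannot conclude from the $8$ vertices alone. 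Progress requires using the ninth vertex $y$ on the $5^+$-face (and, if necessary, splitting by whether that face has length exactly $5$ or longer and by which $x_i$ are $4$-vertices), then locating a forest of size $\geq 6$ collectible from the enlarged subgraph with each collected vertex incident to at most one edge leaving $L$, so that the collection extends any induced forest of $G\setminus L$.
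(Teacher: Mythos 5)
Your argument for the principal case (all four $x$-edges present) is correct: $H=G[V(C)\cup X]$ is an induced $Q_3$, each $w_i$ contributes nothing to $\deg_G(H)$, and Claim~\ref{clm:2-4-vers-in-U} allows at most one $4$-vertex in each of $\{x_0,x_2\}$ and $\{x_1,x_3\}$, so $\deg_G(H)\le 2$ and Claim~\ref{clm:exclue-Q-3-2} applies. (The paper instead observes that $\deg_G(H)\le 4$ and invokes Claim~\ref{clm:exclue-Q-3-4}, but your version also works.) The difficulty is what follows. You explicitly stop short of proving the other two cases --- exactly one $x$-edge absent, or two opposite $x$-edges absent --- offering only a sketch and conceding that the bookkeeping is unfinished. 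That gap is real: no exclusion claim in the paper covers $Q_3$ with one edge removed, so the one-edge-missing case cannot be dispatched by pattern-matching, and the constraint $5-8a+13b\ge0$ from Inequality~\eqref{seq:Q-3-1-ex} is indeed too weak on the $8$-vertex subgraph alone, as you note. One further caution about your sketched strategy: the sufficiency condition you state --- each collected vertex incident to at most one edge leaving $L$ --- is not the right one; what is needed is that each connected component of the collected set sends at most one edge out of $L$, otherwise a path inside the collected set together with a path in the surrounding forest can close a cycle.

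For comparison, the paper sidesteps your three-way split entirely. After Claim~\ref{clm:two-cons-edges-one-4-face}, one chooses $j\in\{0,1\}$ (wlog $j=0$) so that $x_0x_1$ and $x_2x_3$ are both edges, saying nothing yet about $x_1x_2$ or $x_3x_0$. Taking $x_0$ of maximum degree in $X$ and using Claim~\ref{clm:2-4-vers-in-U} to force $x_2$ (and later $x_3$) to be a $3$-vertex, the paper first shows that $x_1x_2$ and $x_3x_0$ are non-edges and that $x_1$ is a $4$-vertex (else one collects five vertices from $H$ against Inequality~\eqref{seq:Q-3-1-ex}), and then drops $x_1$ and works with the seven-vertex subgraph $K=G[V(C)\cup\{x_0,x_2,x_3\}]$: one collects $\{x_3,w_0,w_2,w_3\}$ from $K$, and since $x_0$ is a $4$-vertex, $m(G)-m(G\setminus K)\ge14$, contradicting $4-7a+14b\ge0$. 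That one argument closes every case your plan leaves open; to complete your proof you would either need to reproduce this, or actually carry out the enlarged-subgraph bookkeeping you only sketched.
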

\begin{proof}
Let $H$ be the subgraph of $G$ induced by $V(C)\cup X$. If no edge of $C$ is on the boundary of a $5^{+}$-face, then $H$ is a $Q_3^{4-}$ subgraph of $G$, contradicting Claim~\ref{clm:exclue-Q-3-4}. Thus, we can assume at least one edge of $C$ is on the boundary of a $5^{+}$-face. By Claim~\ref{clm:two-cons-edges-one-4-face}, $C$ has at most two edges on the boundaries of $5^+$-faces and they cannot be incident to the same vertex of $C$. Thus, there exists $j \in \{0,1\}$ such that two edges $w_jw_{j+1}$ and $w_{j+2}w_{j+3}$ are only on the boundary of 4-faces. Without loss of generality, we assume that $j = 0$. Thus, $x_0x_1$ and $x_2x_3$ are edges of $G$ (see Figure~\ref{fig:sep-4cycle}(c)). By symmetry, we can assume that $x_0$ is the highest degree vertex of $X$. By Claim~\ref{clm:2-4-vers-in-U}, $x_2$ is a 3-vertex. 

We claim that (i) $x_1$ is a 4-vertex and (ii) $x_1x_2$ and $x_3x_0$ are non-edges of $G$. Suppose that at least one of two claims fails, we show that we can collect 5 vertices from $H$. If $x_1x_2$ is an edge of $G$, then we can collect $\{x_1,w_1,w_0,w_3,x_2\}$ from $H$. If $x_0x_3$ is an edge of $G$, then we can collect $\{x_0,w_0,w_1,w_3,x_2\}$ from $H$. If $x_1$ is a 3-vertex, then we can collect $\{w_0,w_2,w_3,x_2,x_1\}$ from $H$. Since $m(G\setminus H) \leq m(G)-13$, by Observation~\ref{obs:meta-exclusion} with $L = H$ and $(\alpha,\beta,\gamma,\eta,\lambda) =(8,13,0,0,5)$, $5 - 8a + 13b $ must be negative, contradicting Inequality~\eqref{lpc:Q-3-1a}. Thus, both claims hold.

Let $K$ be the subgraph induced by $V(C) \cup \{x_0,x_2,x_3\}$. By Claim~\ref{clm:2-4-vers-in-U}, $x_3$ is a 3-vertex. Thus, we can collect $\{x_3,w_0,w_2,w_3\}$ from $K$. Since $x_0$ is the highest degree vertex of $C$, $x_0$ is a 4-vertex. Thus, $m(G\setminus K) \leq m(G)-14$. By Observation~\ref{obs:meta-exclusion} with $L = K$ and $(\alpha,\beta,\gamma,\eta,\lambda) =(7,14,0,0,4)$, $4 -7a + 14b $ must be negative, contradicting Inequality~\eqref{lpc:sep4-three-3v}.
\end{proof}

\noindent By combining Lemma~\ref{lm:no-4face-4-4vers} and Claim~\ref{clm:sepC-atmost-one-deg-3}, we get:
\begin{corollary}\label{cor:no-4Cycle-4-3vers}
Graph $G$ has no 4-cycle with four 3-vertices.
\end{corollary}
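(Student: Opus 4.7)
The plan is to argue by contradiction and perform a simple case split based on the planar position of the hypothetical 4-cycle. Suppose $G$ contains a 4-cycle $C = w_0w_1w_2w_3$ in which every $w_i$ is a 3-vertex. Fix any planar embedding of $G$. By the Jordan Curve Theorem, the image of $C$ partitions the plane into two open regions $\mathrm{int}(C)$ and $\mathrm{ext}(C)$, and each vertex of $G\setminus C$ lies in exactly one of them. I will distinguish two cases according to whether both sides of $C$ contain vertices of $G$.

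In the first case, one of the two regions bounded by $C$ contains neither vertices nor edges of $G$, so $C$ is the boundary of a face of $G$. That face is then a 4-face whose four vertices are all 3-vertices, contradicting Lemma~\ref{lm:no-4face-4-4vers}. In the second case, both $\mathrm{int}(C)$ and $\mathrm{ext}(C)$ contain at least one vertex of $G$, so $C$ is a separating 4-cycle in the sense defined in Subsection~3.3. But then Claim~\ref{clm:sepC-atmost-one-deg-3} says $C$ can contain at most one 3-vertex, contradicting the assumption that all four of $w_0,w_1,w_2,w_3$ are 3-vertices.

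The two cases are exhaustive: a cycle in a planar embedding either bounds a face (when one side is empty of vertices and edges) or is separating (when both sides are non-empty). Since both cases lead to contradictions supplied directly by the two results cited in the hint, no real technical obstacle arises; the only care needed is in stating cleanly the face-versus-separating dichotomy and noting that it exhausts all possibilities, which is exactly what the phrase ``combining Lemma~\ref{lm:no-4face-4-4vers} and Claim~\ref{clm:sepC-atmost-one-deg-3}'' is meant to convey.
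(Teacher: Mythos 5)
Your proof is correct and is exactly the argument the paper intends when it writes ``By combining Lemma~\ref{lm:no-4face-4-4vers} and Claim~\ref{clm:sepC-atmost-one-deg-3}'': split on whether the 4-cycle bounds a face or is separating. (The one small point worth stating is that the dichotomy is exhaustive because a 4-cycle in a triangle-free graph has no chords, so a side with no vertices also has no edges and hence is a face.)
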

\subsubsection{Excluding a 4-face with at least two 3-vertices}

\begin{lemma}\label{lm:no-4face-3-4vers}
Graph $G$ has no 4-face with three 3-vertices.
\end{lemma}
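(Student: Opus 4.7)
My plan is to assume for contradiction that $G$ has a 4-face $C=w_0w_1w_2w_3$ with exactly three 3-vertices — by Corollary~\ref{cor:no-4Cycle-4-3vers} this is the only remaining case in the subsection. After relabeling I take $w_0,w_1,w_2$ to be the 3-vertices; by Lemma~\ref{lm:min-deg-3} and Claim~\ref{clm:no-deg-5} the remaining vertex $w_3$ is then a $4$-vertex. Write $x_0,x_1,x_2$ for the unique non-$C$ neighbors of $w_0,w_1,w_2$, and $y,z$ for the two non-$C$ neighbors of $w_3$; note that triangle-freeness rules out $y$ or $z$ being equal to $x_0$ or $x_2$, since each such coincidence would produce a triangle through $w_3$.

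The first step would be to force the edges $x_0x_1$ and $x_1x_2$ using Claim~\ref{clm:3-4-adj} applied to the 3-to-4 adjacency $(w_0,w_3)$: the two other neighbors $w_1$ and $x_0$ of $w_0$ must share a common neighbor different from $w_0$, and since $N(w_1)\setminus\{w_0\}=\{w_2,x_1\}$, this common neighbor is either $w_2$ or $x_1$. The case $x_0\sim w_2$ would create a 4-cycle $w_0w_1w_2x_0$ sharing both edges $w_0w_1$ and $w_1w_2$ with $C$; since these two edges are consecutive in the rotation at $w_1$ with $C$ on one side, the new 4-cycle would have to coincide with $C$, which is impossible, so it must be separating, and then Claim~\ref{clm:sepC-atmost-one-deg-3} is violated by its three 3-vertices $w_0,w_1,w_2$. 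Hence $x_0x_1\in E(G)$, and symmetrically $x_1x_2\in E(G)$.

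With $x_0x_1$ and $x_1x_2$ in hand, I would consider the induced subgraph on $V(C)\cup\{x_0,x_1,x_2\}$ together with $\{y,z\}$, which is structurally close to $Q_3$ and to $T_6$. I would split on the embedding: whenever $y$ or $z$ is a common neighbor of $x_0$ and $x_2$ so that the eight-vertex subgraph becomes a copy of $Q_3^{5-}$, I invoke Claim~\ref{clm:exclue-Q-3-5} for a contradiction; when the configuration realizes a $T_6^{5-}$ subgraph with all between-vertices on a single face, I invoke Claim~\ref{clm:exclude-T-6-4-5}. In the remaining generic cases I would pick $H$ to be either the induced subgraph on $V(C)\cup\{x_1\}$ or on $V(C)\cup\{x_0,x_1,x_2\}$, argue via Claim~\ref{clm:meta-cub-excl} that $p(G\setminus H)=q(G\setminus H)=0$ once I verify $H$ is connected, collect three or four vertices from $H$ by taking an appropriate independent set containing the available 3-vertices, count at least $10$ or $13$ removed edges, and feed these values into Observation~\ref{obs:meta-exclusion} to produce an inequality that conflicts with one of \eqref{lpc:min-deg-3a}, \eqref{lpc:3v-4vb}, or \eqref{lpc:sep4-three-3v}.

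The main obstacle is the embedding bookkeeping around $w_3$. Because $y$ and $z$ may be identified with $x_1$ or be adjacent to the $x_i$'s in various ways, and because the face status of cycles like $w_3x_0x_1x_2$ need not be automatic, the exact edge count leaving $H$ and the exact $Q_3^d$/$T_6^d$ membership inside $G$ both vary across several subcases. In each one I must carefully verify connectedness of the chosen $H$, ensure $\beta=m(G)-m(G\setminus H)$ is large enough to collide with an existing $\lp$ constraint, and double-check that the resulting linear inequality really is inconsistent rather than already implied. This combinatorial enumeration, rather than any single algebraic step, is where the real work lies.
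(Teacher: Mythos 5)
Your opening half mirrors the paper exactly: you correctly use Claim~\ref{clm:3-4-adj} (applied to $w_0$ and $w_2$ with $u=w_3$), combined with Claim~\ref{clm:sepC-atmost-one-deg-3} to rule out $x_0\sim w_2$ (and symmetrically $x_2\sim w_0$), and conclude $x_0x_1,\,x_1x_2\in E(G)$. That part is sound and is exactly what the paper does.

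The second half has a genuine gap. You propose a case analysis involving the two non-$C$ neighbors $y,z$ of $w_3$, looking for $Q_3^{5-}$ or $T_6^{5-}$ configurations, and you admit you have not carried it out. The paper shows none of that is needed: take $H$ to be the induced subgraph on $V(C)\cup\{x_0,x_1,x_2\}$ ($7$ vertices), and note that since $w_0x_0x_1w_1$ and $w_1x_1x_2w_2$ are $4$-cycles with $w_0,w_1,w_2$ already $3$-vertices, Corollary~\ref{cor:no-4Cycle-4-3vers} forces at least one of $x_0,x_1,x_2$ to be a $4$-vertex. Since triangle-freeness forces $x_0\not\sim x_2$ and a separating-cycle argument forces $x_1\not\sim w_3$, the graph $H$ has exactly $9$ internal edges, giving $m(G)-m(G\setminus H)\geq 23-9=14$. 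Your fallback estimates of ``at least $10$ or $13$ removed edges'' are too weak: the inequality $\lp$ actually supplies here is~\eqref{lpc:sep4-three-3v}, namely $4-7a+14b\geq 0$, and feeding $\beta=13$ into Observation~\ref{obs:meta-exclusion} would not produce a contradiction (e.g.\ with $a=\tfrac{25}{27},b=\tfrac{5}{27}$ the quantity $4-7a+13b$ is negative, so nothing is ruled out). More importantly, you write that you would collect ``an appropriate independent set containing the available $3$-vertices,'' but the paper's collected set is $\{x_1,w_0,w_1,w_2\}$, which is \emph{not} independent: it induces the star centered at $w_1$. That star is collectable precisely because $w_0,w_1,w_2$ have all their neighbors inside $H$ and $x_1$ has at most one neighbor outside $H$ (its other three neighbors $w_1,x_0,x_2$ all lie in $H$). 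If you insist on an independent set you are limited to $3$ vertices from $H$, and $(\alpha,\beta,\gamma,\eta,\lambda)=(7,14,0,0,3)$ does not yield a contradiction. So the missing idea is the specific choice of $H$ plus the observation that a $4$-vertex star, not an independent set, is what can be safely collected.
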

\begin{proof}
Let $C = w_0w_1w_2w_3$ be a 4-face of $G$ that has three 3-vertices, say $w_0,w_1,w_2$. Suppose that $w_i$ and $w_{i+2}$ share a neighbor, say $x$, for some $i$ in $\{0,1\}$. Then, the cycle $xw_iw_{i+1}w_{i+2}$ is a separating 4-cycle that has at least two 3-verties, contradicting Claim~\ref{clm:sepC-atmost-one-deg-3}. Thus, $w_i$ and $w_{i+2}$ have no common neighbor for any $i$ in $\{0,1\}$. Let $x_0,x_1,x_2$ be the neighbors of $w_0,w_1,w_2$, respectively. By Claim~\ref{clm:3-4-adj}, $x_0x_1$ and $x_1x_2$ are edges of $G$ (see Figure~\ref{fig:sep-4cycle}(d)). Let $H$ be the subgraph of $G$ induced by $V(C) \cup \{x_0,x_1,x_2\}$. By Corollary~\ref{cor:no-4Cycle-4-3vers}, at least one vertex in $\{x_0,x_1,x_2\}$ is a 4-vertex. Thus, $m(G\setminus H) \leq m(G) - 14$. Observe that we can collect $\{x_1,w_0,w_1,w_2\}$ from $H$.  By Observation~\ref{obs:meta-exclusion} with $L = H$ and $(\alpha,\beta,\gamma,\eta,\lambda) =(7,14,0,0,4)$, $4 -7a + 14b $ must be negative, contradicting Inequality~\eqref{lpc:sep4-three-3v}.
\end{proof}

\noindent By Lemma~\ref{lm:no-4face-3-4vers} and Claim~\ref{clm:sepC-atmost-one-deg-3}, we have:
\begin{corollary}\label{cor:no-4Cycle-3-3vers}
Graph $G$ has no 4-cycle with at least three 3-vertices.
\end{corollary}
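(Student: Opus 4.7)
The plan is to deduce the corollary by a short case analysis that invokes results already proved in this subsection. Every 4-cycle $C$ of $G$ is either facial (bounds one of the faces of the planar embedding) or separating; since $G$ is two-edge connected by Claim~\ref{clm:2-edge-conn}, no other topological behavior is possible. I will split on this dichotomy together with the exact number of 3-vertices lying on $C$.

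Suppose for contradiction that some 4-cycle $C$ of $G$ carries at least three 3-vertices. If $C$ carries four 3-vertices, then Corollary~\ref{cor:no-4Cycle-4-3vers}, obtained immediately above from Lemma~\ref{lm:no-4face-4-4vers} and Claim~\ref{clm:sepC-atmost-one-deg-3}, directly rules this out. Otherwise $C$ carries exactly three 3-vertices, and I handle the two topological cases separately. If $C$ bounds a face, then Lemma~\ref{lm:no-4face-3-4vers} excludes the configuration at once. If $C$ is separating, then Claim~\ref{clm:sepC-atmost-one-deg-3} asserts that any separating 4-cycle of $G$ has at most one 3-vertex, which is strictly stronger than what we need and thus also contradicts the assumption. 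In every subcase we reach a contradiction, so no such 4-cycle exists.

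There is essentially no obstacle here: the substantive work was carried out in Lemma~\ref{lm:no-4face-3-4vers}, Lemma~\ref{lm:no-4face-4-4vers}, and Claim~\ref{clm:sepC-atmost-one-deg-3}, and the corollary merely repackages those exclusions into a single uniform statement about all 4-cycles of $G$. The advantage of the uniform formulation will be felt in Subsection~\ref{subsec:ex-5face-4-3vers}, where 5-faces with many 3-vertices must be excluded and one frequently needs to invoke a 3-vertex bound on arbitrary 4-cycles without revisiting the facial-versus-separating distinction each time.
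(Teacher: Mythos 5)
Your argument is correct and essentially mirrors the paper, which simply cites Lemma~\ref{lm:no-4face-3-4vers} and Claim~\ref{clm:sepC-atmost-one-deg-3}; you are in fact a bit more careful in explicitly invoking Corollary~\ref{cor:no-4Cycle-4-3vers} for the four-3-vertex case. One small correction: the dichotomy "every 4-cycle is facial or separating" comes from $G$ being triangle-free (so a 4-cycle has no chords and thus any vertices inside a region force it to be separating), not from two-edge connectivity.
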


\begin{lemma}\label{lm:no-4face-2-4vers}
Graph $G$ has no 4-face with exactly two 3-vertices.
\end{lemma}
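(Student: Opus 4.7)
The plan is to imitate Lemmas~\ref{lm:no-4face-4-4vers} and~\ref{lm:no-4face-3-4vers}: force the local structure around the 4-face, pick an induced subgraph $H$, identify a large subset to collect, and use Observation~\ref{obs:meta-exclusion} to derive a contradiction with either an already-added constraint or one in Linear Program~\ref{eq:our-linear}.

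Suppose for contradiction that $C = w_0w_1w_2w_3$ is a 4-face with exactly two 3-vertices. Claim~\ref{clm:C-non-adj-deg-3}(i) forces the two 3-vertices to be adjacent and to have their non-$C$ edges on the same side of $C$, so I may assume $w_0, w_1$ are the 3-vertices, $w_2, w_3$ are the 4-vertices, and the non-$C$ neighbors $x_0$ of $w_0$ and $x_1$ of $w_1$ lie on the same side; triangle-freeness of $G$ forces $x_0 \neq x_1$. Applying Claim~\ref{clm:3-4-adj} to the edge $w_0w_3$ forces $\{w_1, x_0\}$ to share a neighbor other than $w_0$; since $N(w_1)\setminus\{w_0\}=\{w_2,x_1\}$, this gives $x_0w_2 \in E(G)$ or $x_0x_1 \in E(G)$. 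Symmetrically, applying the claim to $w_1w_2$ gives $x_1w_3 \in E(G)$ or $x_0x_1 \in E(G)$. This splits into two principal cases: Case~A, in which $x_0x_1 \in E(G)$; and Case~B, in which $x_0x_1 \notin E(G)$ and both $x_0w_2$ and $x_1w_3$ are edges.

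In each case I would let $H$ be the subgraph induced by $V(C) \cup \{x_0, x_1\}$ (so $\alpha = 6$), refine by the degrees of $x_0, x_1$ and by which of the edges $x_0w_2, x_1w_3$ are present, and apply Claim~\ref{clm:sepC-atmost-one-deg-3} to each new 4-cycle that arises (e.g.\ $w_0w_1x_1x_0$ in Case~A, or $w_0w_3w_2x_0$ and $w_1w_2w_3x_1$ in Case~B) to cap the number of 3-vertices on it. In each sub-configuration I would then isolate a set $S \subseteq V(H)$ of 3 or 4 vertices that is an induced forest in $H$ with at most one between vertex, so that adjoining $S$ to any induced forest of $G\setminus H$ remains acyclic. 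Using Claim~\ref{clm:meta-cub-excl} to guarantee $p(G\setminus H) = q(G\setminus H) = 0$ and Claim~\ref{clm:no-deg-5} to keep all degrees at most $4$, Observation~\ref{obs:meta-exclusion} should yield an inequality contradicting one of~\eqref{lpc:sep4-three-3v}, \eqref{lpc:min-deg-3b}, \eqref{lpc:min-deg-3a}, or~\eqref{lpc:2v-4v}, which correspond exactly to removing $5$--$7$ vertices and $10$--$14$ edges while collecting $3$--$4$ vertices, the regime relevant here.

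The main obstacle is Case~B, where the simultaneous presence of $x_0w_2$ and $x_1w_3$ couples both sides of $C$ and the subgraph on $V(C) \cup \{x_0, x_1\}$ already carries at least eight internal edges, which inflates $\beta$ but tends to shrink the collectable $\gamma$. If a direct application of Observation~\ref{obs:meta-exclusion} to $H$ falls just short, I plan to enlarge $H$ by the remaining non-$C$ neighbors of $w_2, w_3$ (or of $x_0, x_1$); at that scale Claim~\ref{clm:exclue-Q-3-5} and Claim~\ref{clm:exclude-T-6-4-5} severely restrict what $G\setminus H$ can contain, and the new bookkeeping should align with an existing constraint. A secondary difficulty is verifying that $S$ creates no cycle through $G\setminus H$ when it has more than one between vertex; in that case I would either shrink $S$ to a size-$3$ set centered on one between vertex, or invoke Claim~\ref{clm:sepC-atmost-one-deg-3} on a separating 4-cycle to rule the geometry out entirely.
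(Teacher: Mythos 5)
Your plan has two genuine gaps, and together they mean the direct-removal strategy cannot close this lemma the way it closed Lemmas~\ref{lm:no-4face-4-4vers} and~\ref{lm:no-4face-3-4vers}.

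First, your Case~B cannot occur, but the 4-cycles you name there are not the ones that rule it out. If $x_0w_2 \in E(G)$, the relevant cycle is $D = x_0w_0w_1w_2$: it carries the two 3-vertices $w_0,w_1$, it is chordless by triangle-freeness, and it cannot bound a face because it would share the two consecutive edges $w_0w_1$ and $w_1w_2$ with the face $C$ at the 3-vertex $w_1$. Hence $D$ is a separating 4-cycle with two 3-vertices, contradicting Claim~\ref{clm:sepC-atmost-one-deg-3} directly. (This is exactly why the paper can write $x_0x_1\in E(G)$ with no case split.) The cycles $w_0w_3w_2x_0$ and $w_1w_2w_3x_1$ you propose to examine already contain two 4-vertices each, so capping their 3-vertices yields nothing.

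Second, and more seriously, in the surviving case $x_0x_1\in E(G)$ the direct application of Observation~\ref{obs:meta-exclusion} to $H=G[V(C)\cup\{x_0,x_1\}]$ is arithmetically hopeless. By Corollary~\ref{cor:no-4Cycle-3-3vers} (applied to the 4-cycle $w_0w_1x_1x_0$), both $x_0$ and $x_1$ are 4-vertices; so $w_2,w_3,x_0,x_1$ are all 4-vertices with exactly two edges out of $H$, and the only vertices of $H$ with no external edge are $w_0,w_1$. The only set you can collect unconditionally is $\{w_0,w_1\}$, giving $\lambda=2$, $\alpha=6$, $\beta=15$, and $2-6a+15b=-21/27<0$ at $(a,b)=(25/27,5/27)$; adding the constraint $2-6a+15b\ge 0$ to $\lp$ would destroy the target optimum. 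Your criterion of ``at most one between vertex'' is not sufficient to guarantee collectability: a single between vertex such as $w_3$ has two external edges, and if its two neighbours outside $H$ lie in the same tree of the forest $F$ of $G\setminus H$ (which nothing in the configuration forbids), adjoining $w_3$ closes a cycle. Enlarging $H$ does not obviously rescue this, since every new vertex added is again a $3^+$-vertex with its own external edges.

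What the paper does instead is a structurally different move that you do not have: after showing $x_j$ and $w_{j+2}$ are non-adjacent and that one may assume $x_0,w_2$ share no common neighbour, it deletes $\{x_1,w_0,w_1,w_3\}$ \emph{and adds the new edge $x_0w_2$}, then works in the modified graph $G'$. The added edge is the whole trick: if both $x_0$ and $w_2$ lie in an induced forest $F'$ of $G'$, they must be adjacent in $F'$, so re-inserting the path $x_0\!-\!w_0\!-\!w_1\!-\!w_2$ cannot form a cycle in $G$ (where $x_0w_2$ is a non-edge). This takes the argument outside the Observation~\ref{obs:meta-exclusion} template, and it then requires a careful case analysis of whether $G'$ acquires a $Q_3^{1-}$ subgraph or $T_6$ component through the new edge, including a separate geometric argument in the $Q_3$ subcase. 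Without the edge-addition idea, or something equally strong to decouple the two external edges $w_0x_0$ and $w_1w_2$, your strategy stalls exactly where the LP coefficients refuse to cooperate.
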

\begin{proof}
Let $w_0,w_1,w_2,w_3$ be vertices in clock-wise order of a 4-face $C$ of $G$ that has exactly two 3-vertices. By Claim~\ref{clm:C-non-adj-deg-3} two 3-vertices of $C$ must be adjacent. Without loss of generality, we assume that two 3-vertices are $w_0$ and $w_1$. Let $x_0,x_1$ be the neighbors of $w_0,w_1$, respectively. By Claim~\ref{clm:3-4-adj}, $x_0x_1$ is an edge of $G$. By Corollary~\ref{cor:no-4Cycle-3-3vers}, $x_0$ and $x_1$ are 4-vertices. We now show that $x_j$ and $w_{j+2}$ are non-adjacent for any $j$ in $\{0,1\}$. If $x_0$ and $w_2$ are adjacent, then the cycle $x_0w_0w_{1}w_{2}$ is a separating 4-cycle that has at least two 3-vertices. If $x_1$ and $w_3$ are adjacent, then the cycle $x_1w_1w_{0}w_{3}$ is a separating 4-cycle that has at least two 3-vertices. Both cases contradict Claim~\ref{clm:sepC-atmost-one-deg-3}. Thus, $x_j$ and $x_{j+2}$ are non-adjacent.

If $x_0$ and $w_2$ share a common neighbor and $x_1$ and $w_3$ share a common neighbor, by planarity, they all share a common neighbor, contradicting that $G$ is triangle-free. Thus, by symmetry (see Figure~\ref{fig:fourF}(a)), we can assume that $x_0$ and $w_2$ share no common neighbor. Let $G'$ be the graph obtained by removing $\{x_1,w_0,w_1,w_3\}$ from $G$ and adding edge $x_0w_2$. Then, $G'$ is a triangle-free planar graph. By Claim~\ref{clm:meta-cub-excl}, the graph obtained by removing $\{x_1,w_0,w_1,w_3\}$ from $G$ has no $T_6$ component and $Q_3^{1-}$ subgraph. Thus, any $T_6$ component or $Q_3^{1-}$ subgraph of $G'$ must contains edge $x_0w_2$. That implies $p(G') + q(G') = 1$.  We consider three cases:
\begin{description}
\item[Case 1] $p(G') = q(G') = 0$. Then $G'$ has an induced forest $F'$ of order at least $an(G') - bm(G')$. By adding $\{w_0,w_1\}$ to $F'$, we obtain a forest of order at least  $an(G') - bm(G') + 2$. Since $n(G')=  n(G) - 4$ and $m(G') = m(G)-10$, we have:
\begin{equation*}
an(G') - bm(G') + 2 = an(G) -bm(G) + 2 - 4a + 10b
\end{equation*}
Since $2 - 4a + 10b$ is non-negative by Inequality~\eqref{lpc:2v-4v}, $G$ has an induced forest of order at least $an(G) - bm(G)$, contradicting that $G$ is a counter-example.
\item[Case 2] $p(G') = 1$ and $q(G') = 0$. Let $G''$ be the graph obtained from $G'$ by removing the $T_6$ component of $G'$. Since $G''$ can also be obtained from $G$ by removing $V(T_6)\cup\{x_1,w_0,w_1,w_3\}$ which induces a connected subgraph of $G$, by Claim~\ref{clm:meta-cub-excl}, $p(G'') = q(G'') = 0$. Thus, $G''$ has a forest $F''$ of order at least $an(G'') - bm(G'')$. By Lemma~\ref{lm:T6-like-forest}, we can add 4 vertices from the $T_6$ component to $F''$ to get an induced forest $\hat{F}$ of $G'$ of order at least $an(G'') - bm(G'')+4$. By adding $w_0$ and $w_1$ to $\hat{F}$, we get an induced forest of order at least $an(G'') - bm(G'')+6$ in $G$. Since $n(G'') = n(G)-10$ and $m(G'') = m(G)-18$, we have:
\begin{equation*}
an(G'') - bm(G'') + 6 = an(G) -bm(G) + 6 - 10a + 18b
\end{equation*}
Since $6 - 10a + 18b$ is non-negative by Inequality~\eqref{lpc:min-deg-3a}, $G$ has an induced forest of order at least $an(G) -bm(G)$, contradicting that $G$ is a counter-example.

\item[Case 3] $p(G') = 0$ and $q(G') = 1$. Let $M$ be the $Q_3^{1-}$ subgraph of $G'$. We consider two subcases:
\begin{description}
\item[Subcase 1] $M$ is $Q_3^{1}$ in $G'$. Let $G''' = G'\setminus M$. Then, $G'''$ can also be obtained from $G$ by removing $V(M) \cup \{x_1,w_0,w_1,w_3\}$ which induces a connected subgraph of $G$. Thus, by Claim~\ref{clm:meta-cub-excl}, $p(G''') = q(G''') = 0$. Let $F'''$ be a forest of $G'''$ of order at least $an(G''') - bm(G''')$. By Lemma~\ref{lm:cublike-forest}, we can add 5 vertices of $M$ to $F'''$ to get an induced forest $\overline{F}$ in $G'$ of order at least $an(G''') - bm(G''')+5$. By adding $w_0$ and $w_1$ to $\overline{F}$, we get an induced forest of order at least $an(G''') - bm(G''')+7$ in $G$. Since $n(G''') = n(G)-12$ and $m(G'') = m(G)-23$, we have:
\begin{equation*}
an(G''') - bm(G''') + 6 = an(G) -bm(G) + 7 - 12a + 23b
\end{equation*}
 Thus, we obtain contradiction by adding Inequality~\eqref{lpc:4f-2-3v} to $\lp$.
\begin{equation} \label{lpc:4f-2-3v}
7 - 12a + 23b \geq 0
\end{equation}
\item[Subcase 2]$M$ is $Q_3$ in $G'$. Recall that $x_0w_2$ must be an edge of $M$. By symmetry of $Q_3$, we can assume w.l.o.g that $x_0 = u_1$ and $w_2 = u_2$ (see Figure~\ref{fig:fourF}(b)). Consider the cycle $\hat{C} = x_0u_5u_6w_2u_3u_4$ of $M$. $\hat{C}$ is also the cycle of $G$. Thus, $x_1,w_0,w_1,w_3$ is embedded inside $\hat{C}$ in $G$.  That implies $u_7$ and $u_8$ are 3-vertices in $G$. Observe that the path $x_0w_0w_1w_2$ separate the internal part of  $\hat{C}$ into two parts, one contains $x_1$ and another contains $w_3$. Let $C' = x_0w_0w_1w_2u_6u_5$ and $C'' = x_0w_0w_1w_2u_3u_4$ be two cycles of $G$. We consider $C$ as a 4-cycle of $G$ instead of a 4-face so that we can speak of the symmetry of the subgraph induced by $\{x_0,x_1,w_0,w_1,w_2,w_3\}$ . By symmetry, we can assume~w.l.o.g that $x_1$ is inside $C'$ and $w_3$ is inside $C''$. Since $G$ is triangle free, $x_1$ and $u_5$ are non-adjacent. Thus, $u_5$ is a 3-vertex in $G$. That implies 4-cycle $u_5u_6u_7u_8$ has three 3-vertices, contradicting Corollary~\ref{cor:no-4Cycle-3-3vers}.
\end{description}
\end{description}
\end{proof}
\begin{figure}[tbh]
  \centering
   \includegraphics[height=1.4in]{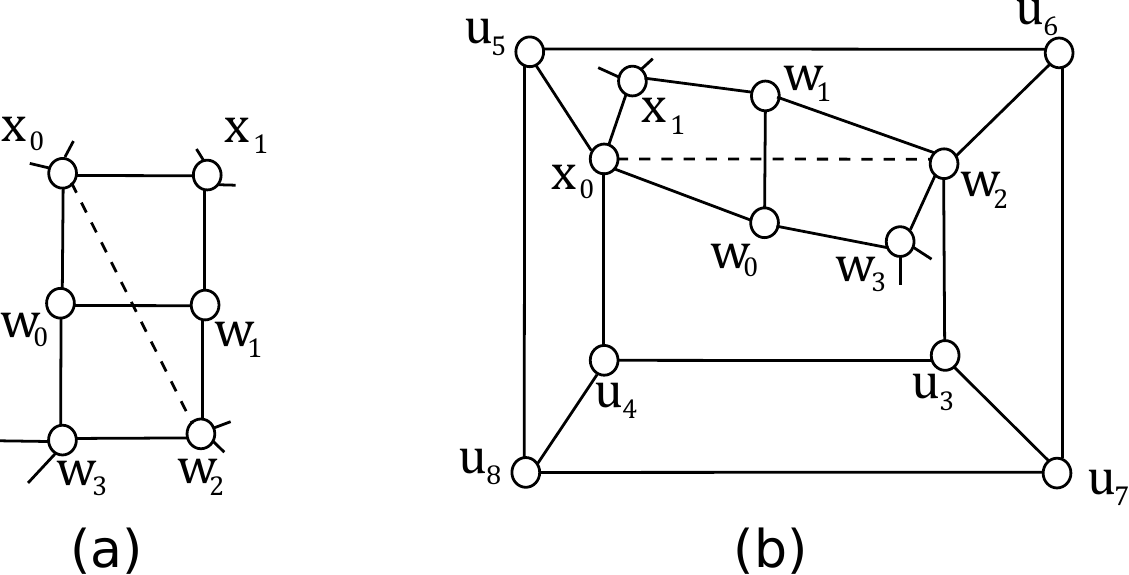}
      \caption{(a) A configuration in the proof of Lemma~\ref{lm:no-4face-2-4vers} (b) A configuration in the proof of Subcase 2 in Lemma~\ref{lm:no-4face-2-4vers}.}
  \label{fig:fourF}
\end{figure} 

\subsubsection{Excluding a 4-face with exactly one 3-vertices}

In this subsection, we denote $C = w_0w_1w_2w_3$ to be a 4-face of $G$ such that $w_0$ is a 3-vertex and $w_1,w_2,w_3$ are 4-vertices. From Lemma~\ref{lm:no-4face-4-4vers},~\ref{lm:no-4face-3-4vers} and ~\ref{lm:no-4face-2-4vers} and Claim~\ref{clm:sepC-atmost-one-deg-3}, we have:
\begin{corollary}\label{cor:4face-one-4ver}
Any $4$-cycle of $G$ has at most one 3-vertex.
\end{corollary}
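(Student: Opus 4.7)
The plan is to split on whether the 4-cycle in question bounds a face of $G$. By the Jordan Curve Theorem, the image of any cycle $C$ in a planar embedding of $G$ partitions the plane into two regions; $C$ is either a facial cycle (one region contains no vertices or edges of $G$) or a separating cycle (both regions contain at least one vertex). Since we already know from Claim~\ref{clm:2-edge-conn} and Lemma~\ref{lm:min-deg-3} that $G$ is two-edge connected with minimum degree at least $3$, this dichotomy is clean: every 4-cycle of $G$ is either a 4-face boundary or a separating 4-cycle.

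First I would dispose of the facial case. If the 4-cycle $C$ bounds a face, then $C$ is a 4-face of $G$. The three lemmas that immediately precede the corollary, namely Lemma~\ref{lm:no-4face-4-4vers}, Lemma~\ref{lm:no-4face-3-4vers}, and Lemma~\ref{lm:no-4face-2-4vers}, rule out the existence of a 4-face with four, three, or exactly two 3-vertices respectively. Combining them, a 4-face of $G$ has at most one 3-vertex, which is the desired conclusion in this case.

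Next I would handle the non-facial case. If $C$ does not bound a face, then both open regions determined by $C$ in the plane embedding contain at least one vertex of $G$, so $C$ is a separating 4-cycle. Claim~\ref{clm:sepC-atmost-one-deg-3} applies directly and gives that $C$ has at most one 3-vertex.

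Since a 4-cycle must fall into one of the two cases above, the corollary follows. I do not anticipate a real obstacle here; the corollary is a bookkeeping consequence collecting four previously established statements, and the only point that needs to be made explicit is the elementary observation that a non-facial cycle in a plane embedding is automatically separating, which is an immediate application of the Jordan Curve Theorem.
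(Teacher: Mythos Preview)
Your proof is correct and matches the paper's approach exactly: the paper states the corollary immediately after noting it follows from Lemmas~\ref{lm:no-4face-4-4vers}, \ref{lm:no-4face-3-4vers}, \ref{lm:no-4face-2-4vers} and Claim~\ref{clm:sepC-atmost-one-deg-3}, which is precisely your facial/separating dichotomy. Your explicit justification that a non-facial 4-cycle in a triangle-free plane graph must be separating (no chords, hence an empty region is a face) is a welcome clarification the paper leaves implicit.
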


\begin{claim}\label{clm:no-3-3-4}
Graph $G$ has no 3-vertex that has a 3-vertex and a 4-vertex as neighbors.
\end{claim}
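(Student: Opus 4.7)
The plan is to derive Claim~\ref{clm:no-3-3-4} directly from Claim~\ref{clm:3-4-adj} and Corollary~\ref{cor:4face-one-4ver} without adding any new inequality to $\lp$. Suppose, for contradiction, that some 3-vertex $v$ has a 3-vertex neighbor $u$ and a 4-vertex neighbor $w$, and let $x$ be the third neighbor of $v$. First I would apply Claim~\ref{clm:3-4-adj} to the 3-vertex $v$ adjacent to the 4-vertex $w$: the two remaining neighbors of $v$, namely $u$ and $x$, must share a common neighbor $y \neq v$.

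Next I would check that $v, u, y, x$ are four distinct vertices spanning a genuine 4-cycle in $G$. By construction $y \neq v$, and $y \neq u, x$ since $y$ is a neighbor of each of $u$ and $x$. Moreover $u$ and $x$ already share the common neighbor $v$, so triangle-freeness of $G$ forces $u$ and $x$ to be non-adjacent. Hence the edges $vu$, $uy$, $yx$, and $xv$ form a simple 4-cycle $C^{\star}$ in $G$.

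The cycle $C^{\star}$ contains two 3-vertices, namely $v$ and $u$, contradicting Corollary~\ref{cor:4face-one-4ver}, which asserts that every 4-cycle of $G$ has at most one 3-vertex. I do not anticipate any obstacle: both Claim~\ref{clm:3-4-adj} and Corollary~\ref{cor:4face-one-4ver} have already been established, and the only bookkeeping is the distinctness of the four vertices of $C^{\star}$ together with the non-adjacency of $u$ and $x$, both of which follow immediately from triangle-freeness and the conclusion of Claim~\ref{clm:3-4-adj}.
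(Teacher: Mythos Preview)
Your proposal is correct and follows essentially the same approach as the paper: apply Claim~\ref{clm:3-4-adj} to the 3-vertex adjacent to a 4-vertex, obtain a common neighbor $y$ of the two remaining neighbors, and observe that the resulting 4-cycle has two 3-vertices, contradicting Corollary~\ref{cor:4face-one-4ver}. Your version is in fact a bit more careful than the paper's, since you explicitly verify the distinctness of the four cycle vertices and the non-adjacency of $u$ and $x$.
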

\begin{proof}
Suppose that $G$ has a 3-vertex $u$ that has a 3-vertex $v$ and a 4-vertex $w$ as neighbors. Let $x$ be a neighbor of $u$ such that $x \not\in \{v,w\}$. By Claim~\ref{clm:3-4-adj}, $x$ and $v$ has a neighbor $y$ such that $y\not= u$. Thus, 4-cycle $uxyv$ has two 3-vertices, contradicting Corollary~\ref{cor:4face-one-4ver}.
\end{proof}

Let $x_0$ be the non-$C$ neighbor of $w_0$. By Claim~\ref{clm:3-4-adj}, $x_0$ and $w_1$ have a common neighbor, say $x_1$, and $x_0$ and $w_3$ have a common neighbor, say $x_3$.  Since $x_0x_1w_1w_0$ is a 4-cycle that has $w_0$ as a 3-vertex, by Corollary~\ref{cor:4face-one-4ver}, $x_0$ must be a 4-vertex. 
\begin{claim}\label{clm:v0-v2-no-comm-nbr}
Two vertices $x_0,w_2$ are non-adjacent.
\end{claim}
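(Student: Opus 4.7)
The plan is to assume $x_0\sim w_2$ for contradiction and derive it from a rotation-system argument at $w_2$. Under this assumption, the $4$-vertex $x_0$ has neighbors exactly $\{w_0, w_2, x_1, x_3\}$, while the $4$-vertex $w_2$ has a unique fourth neighbor $y$ outside $\{w_1, w_3, x_0\}$. Note that $y\notin\{x_1,x_3\}$ by triangle-freeness, since otherwise $y,w_1,w_2$ or $y,w_3,w_2$ would form a triangle.

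I would then form two auxiliary $4$-cycles
\[
C' = x_0 w_0 w_1 w_2 \qquad \text{and} \qquad C'' = x_0 w_0 w_3 w_2.
\]
Each of $C'$ and $C''$ is a genuine $4$-cycle of $G$ with exactly one $3$-vertex ($w_0$) and three $4$-vertices, and in each of them $w_2$ sits opposite $w_0$. Claim~\ref{clm:C-non-adj-deg-3}(ii) therefore forces the two non-$C'$ edges incident to $w_2$, namely $w_2w_3$ and $w_2y$, to be embedded on the same side of $C'$; and simultaneously forces the two non-$C''$ edges at $w_2$, namely $w_2w_1$ and $w_2y$, to lie on the same side of $C''$.

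The key lever is that $C = w_0 w_1 w_2 w_3$ is a face of $G$, so at $w_2$ the edges $w_2w_1$ and $w_2w_3$ must be cyclically consecutive in the rotation (the face $C$ sits between them with nothing else in between). Up to reflection, only two rotations at $w_2$ then remain: $(w_1, w_3, x_0, y)$ and $(w_1, w_3, y, x_0)$. In the first, the pair $\{w_3, y\}$ is split by the $C'$-edges $\{w_2w_1, w_2x_0\}$, so $w_2w_3$ and $w_2y$ lie on opposite sides of $C'$, violating the $C'$-constraint; in the second, the pair $\{w_1, y\}$ is split by the $C''$-edges $\{w_2w_3, w_2x_0\}$, so $w_2w_1$ and $w_2y$ lie on opposite sides of $C''$, violating the $C''$-constraint. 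Either way we get a contradiction, so $x_0\not\sim w_2$. The main obstacle is purely bookkeeping: carefully reading "same side" of a $4$-cycle as "same arc in the rotation at $w_2$ cut out by the two cycle-edges", and then dispatching the two rotations.
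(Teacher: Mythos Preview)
Your argument is correct and is essentially the same as the paper's: the paper also assumes $x_0\sim w_2$, names the fourth neighbor of $w_2$ (calling it $x_2$ rather than $y$), forms the same two $4$-cycles $C_1=x_0w_0w_3w_2$ and $C_2=x_0w_0w_1w_2$, and applies part~(ii) of Claim~\ref{clm:C-non-adj-deg-3} to $w_2$ in each to obtain the contradiction; your rotation case-split at $w_2$ is just an explicit unpacking of the paper's one-line ``that implies'' step. (The aside that $x_0$'s neighbors are exactly $\{w_0,w_2,x_1,x_3\}$ is not needed and is not obviously justified---nothing rules out $x_1=w_2$ or $x_3=w_2$ a priori---but since you never use it, this does not affect the proof.)
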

\begin{proof}
Suppose otherwise. Let $x_2$ be the non-$C$ neighbor of $w_2$ such that $x_2 \not= x_0$. Let $C_1 = x_0w_0w_3w_2$ and $C_2 = x_0w_0w_1w_2$ be two 4-cycles of $G$. By (ii) of Claim~\ref{clm:C-non-adj-deg-3}, two edges $w_2x_2$ and $w_2w_1$ must be embedded in the same side of $C_1$. That  implies two edges $w_2x_2$ and $w_2w_3$ are embedded in different sides of $C_2$, contradicting (ii) of Claim~\ref{clm:C-non-adj-deg-3}.
\end{proof}

\begin{claim}\label{clm:v1-v3-no-comm-nbr}
There is no common neighbor between $w_1$  and $w_3$.
\end{claim}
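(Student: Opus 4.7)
Suppose for contradiction that $w_1$ and $w_3$ share a common neighbor $z\notin\{w_0,w_2\}$. First I observe that $z$ must be a $4$-vertex, since the $4$-cycle $C_1 := w_0w_1zw_3$ contains $w_0$ as a $3$-vertex and by Corollary~\ref{cor:4face-one-4ver} every $4$-cycle of $G$ has at most one $3$-vertex.

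The plan is to follow the template of Claim~\ref{clm:v0-v2-no-comm-nbr}. The two $4$-cycles $C$ and $C_1$ share the path $w_1w_0w_3$ and, together with $C_2:=w_1w_2w_3z$, form a theta graph. Since $C$ is a $4$-face (empty interior), a cyclic-order argument at $w_0$ places the edge $w_0x_0$ in the interior of $C_1$; planarity, together with the adjacencies $x_0x_1$ and $x_0x_3$, then forces both $x_1$ and $x_3$ to lie in the interior of $C_1$ as well.

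I would next apply Claim~\ref{clm:C-non-adj-deg-3}(ii) to the $4$-vertex $z$ in $C_1$ (noting that $z$ is the unique $4$-vertex of $C_1$ not adjacent to the $3$-vertex $w_0$), which forces both non-$C_1$ edges of $z$ to lie on the same side of $C_1$. This gives a binary case split: either both non-$C_1$ neighbors of $z$ lie in the interior of $C_1$ (the same side as $x_0,x_1,x_3$) or both lie in the interior of $C_2$ (the same side as $w_2$). In each case I would either derive a direct planar-embedding contradiction---some vertex is forced into the empty face $C$, exactly as in the final step of the proof of Claim~\ref{clm:v0-v2-no-comm-nbr}---or identify a small induced subgraph $H$ containing $\{w_0,w_1,w_2,w_3,z,x_0,x_1,x_3\}$ (possibly augmented with $z$'s non-$C_1$ neighbors) and apply Observation~\ref{obs:meta-exclusion} with an independent collection set, yielding a linear inequality that contradicts an earlier constraint of $\lp$ such as~\eqref{lpc:sep4-three-3v},~\eqref{lpc:2e-one-4f}, or~\eqref{lpc:4f-2-3v}.

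The main obstacle is that, unlike in Claim~\ref{clm:v0-v2-no-comm-nbr} where both applications of Claim~\ref{clm:C-non-adj-deg-3}(ii) were to the same off-cycle $4$-vertex $w_2$, the two principal cycles $C$ and $C_1$ here do not share a common off-cycle $4$-vertex. To pin down the embedding, one must introduce auxiliary $4$-cycles $w_0w_1x_1x_0$ and $w_0w_3x_3x_0$ and apply (ii) to $x_1$ and $x_3$ there. The edge cases where $z$ coincides with $x_1$ or $x_3$ can be handled separately via the additional $4$-cycle $w_0x_0zw_3$ or $w_0x_0zw_1$, which introduces $w_0$ as a $3$-vertex on yet another $4$-cycle and further restricts the embedding.
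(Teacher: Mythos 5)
Your opening move is the same as the paper's: the common neighbor (your $z$, the paper's $x$) must be a $4$-vertex because $w_1w_0w_3z$ is a $4$-cycle containing the $3$-vertex $w_0$. After that, however, the proposal is an outline rather than a proof: ``I would either derive a direct planar-embedding contradiction \dots or identify a small induced subgraph \dots and apply Observation~\ref{obs:meta-exclusion}'' is a plan, and neither branch is carried out. In the subcase where both non-$C_1$ edges of $z$ lie inside $C_1$ there is no immediate planar obstruction---nothing forces a vertex into the empty face $C$---so you would have to fall back on a discharging count anyway; but you have not specified the collection set, the edge count, or the resulting inequality, and the constraints you name (\eqref{lpc:sep4-three-3v}, \eqref{lpc:2e-one-4f}, \eqref{lpc:4f-2-3v}) are not the one that is actually contradicted.

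The paper instead splits on whether $x_0,w_1,w_3$ share a common neighbor. When they do (which subsumes your ``$z=x_1$ or $z=x_3$'' edge cases), it applies part~(ii) of Claim~\ref{clm:C-non-adj-deg-3} three times at the single vertex $x$ in the overlapping $4$-cycles $x_0w_0w_1x$, $x_0w_0w_3x$, and $xw_3w_0w_1$, obtaining a parity contradiction; this is the only place embedding topology is needed, and it is clean precisely because all three cycles share the same off-cycle $4$-vertex $x$. When they do not, $x,x_1,x_3$ are pairwise distinct, every vertex of the $8$-vertex subgraph $V(C)\cup\{x,x_0,x_1,x_3\}$ except $w_0$ is a $4$-vertex (apply Corollary~\ref{cor:4face-one-4ver} to the $4$-cycles $w_0x_0x_1w_1$ and $w_0x_0x_3w_3$), so deleting it removes exactly $20$ edges, and collecting $\{x_0,w_0,w_1,w_3\}$ yields $4-8a+20b\ge 0$, contradicting Inequality~\eqref{lpc:2v-4v}---no topology required. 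Your plan inverts this structure: it front-loads a delicate cyclic-order argument in the generic case, where the paper just counts, and defers the degenerate cases, where the paper's topological argument actually lives. To finish along your lines you would still need to reconstruct the counting step, so the embedding preliminaries do not buy you anything.
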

\begin{proof}
We note that $w_1$ and $w_3$ can have up to 4 common neighbors. Let $x$ be a non-$C$ common neighbor of  $w_1$ and $w_3$. Since $w_0$ is a 3-vertex in 4-cycle $w_1w_0w_3x$, by Corollary~\ref{cor:4face-one-4ver}, $x$ must be a 4-vertex. We consider two cases:
\begin{description}
\item[Case 1] Three vertices $x_0,w_1,w_3$ share a common neighbor, that we assume~w.l.o.g to be $x$ (see Figure~\ref{fig:fourF-one3v}(a)). Let $C_3 = x_0w_0w_1x$, $C_4 = x_0w_0w_3x$ and $C_5 = xw_3w_0w_1$. By (ii) of Claim~\ref{clm:C-non-adj-deg-3}, two non-$C_3$ edges incident to $x$ must be embedded in the same side of $C_3$ and two non-$C_4$ edges incident to $x$ must be embedded in the same side of $C_4$. That implies two non-$C_5$ edges incident to $x$ are embedded in different side of $C_5$, contradicting (ii) of Claim~\ref{clm:C-non-adj-deg-3}.
\item[Case 2] Three vertices $x_0,w_1,w_3$ do not share a common neighbor. Then, $x, x_1$ and $x_3$ are pair-wise distinct(see Figure~\ref{fig:fourF-one3v}(b)). Let $H$ be the subgraph of $G$ induced by $V(C) \cup \{x,x_0,x_1,x_3\}$. Observe that we can collect $\{x_0,w_0,w_1,w_3\}$ from $H$. Since $G$ is triangle-free, $xw_2, xx_3,xx_1,x_1x_3$ are non-edges of $G$. Thus, $n(H) = n(G)-8$ and $m(H) = m(G) - 20$. By Observation~\ref{obs:meta-exclusion} with $L = H$ and $(\alpha,\beta,\gamma,\eta,\lambda) =(8,20,0,0,4)$, $4 -8a + 20b $ must be negative, contradicting Inequality~\eqref{lpc:2v-4v}.\qedhere
\end{description}
\end{proof}

\begin{figure}[tbh]
  \centering
   \includegraphics[height=1.2in]{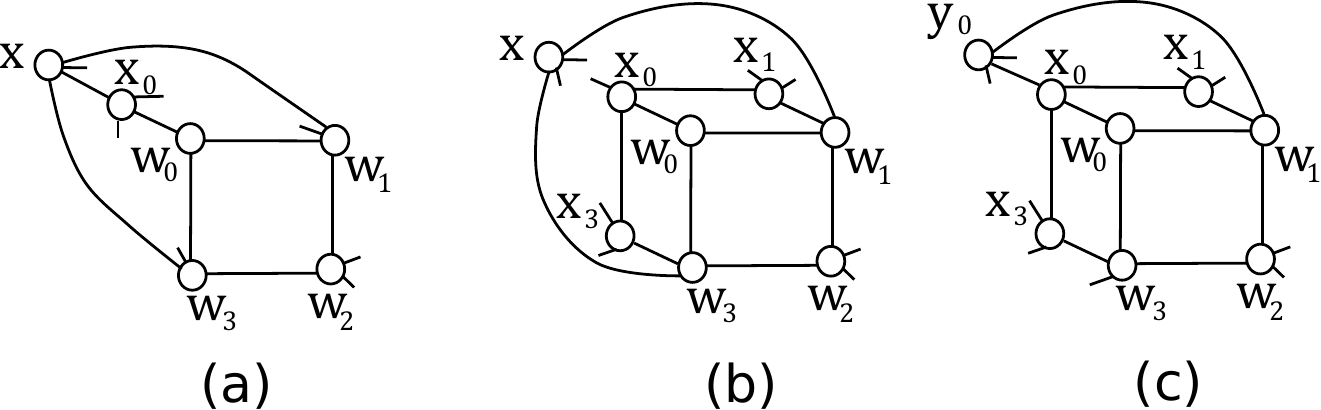}
      \caption{(a) A configuration in the proof of Case 1 of Claim~\ref{clm:v1-v3-no-comm-nbr} (b) A configuration in the proof of Case 2 Claim~\ref{clm:v1-v3-no-comm-nbr} (c) A configuration in the proof of Claim~\ref{clm:xyz-adj}}
  \label{fig:fourF-one3v}
\end{figure} 

Let $y_1$ and $y_3$ be the non-$C$ neighbors of $w_1$ and $w_3$, respectively, such that $y_1\not = x_1$ and $y_3\not= x_3$. Let $y_0$ be the non-$C$ neighbor of $x_0$ such that $y_0 \not\in \{x_1,x_3\}$. Let $Z = V(C) \cup \{x_0,x_1,x_3, y_0,y_1,y_3\}$.
\begin{claim}\label{clm:Z-distinct}
Vertices in $Z$ are pairwise distinct.
\end{claim}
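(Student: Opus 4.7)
The plan is to verify the pairwise distinctness of the ten vertices of $Z$ in four stages. The vertices $w_0,w_1,w_2,w_3$ of the 4-face $C$ are distinct by definition, and $x_0\notin V(C)$ as a non-$C$ neighbor of $w_0$. For the second stage I show $\{x_1,x_3\}\cap(V(C)\cup\{x_0\})=\emptyset$ and $x_1\ne x_3$: here $x_1\ne x_0,w_1$ by adjacency (no loops), $x_1\ne w_0$ because $x_1$ is chosen as the common neighbor of $x_0$ and $w_1$ \emph{other than} $w_0$ via Claim~\ref{clm:3-4-adj}, $x_1\ne w_3$ since $x_1=w_3$ forces the edge $x_0 w_3$ and thus the triangle $x_0 w_0 w_3$, and $x_1\ne w_2$ since $x_1=w_2$ forces $x_0\sim w_2$, contradicting Claim~\ref{clm:v0-v2-no-comm-nbr}. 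The symmetric argument handles $x_3$, and $x_1=x_3$ would produce a non-$C$ common neighbor of $w_1$ and $w_3$, forbidden by Claim~\ref{clm:v1-v3-no-comm-nbr}.

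The third stage verifies $y_0,y_1,y_3\notin V(C)\cup\{x_0,x_1,x_3\}$. Each individual coincidence is ruled out by some combination of adjacency, the defining conventions ($y_0\notin\{x_1,x_3\}$, $y_1\ne x_1$, $y_3\ne x_3$), triangle-freeness (for instance $y_0=w_1$ or $y_0=w_3$ forces a triangle $x_0 w_0 w_i$; $y_1=w_3$ forces the edge $w_1 w_3$ and hence a triangle $w_0 w_1 w_3$; $y_1=x_0$ forces a triangle $x_0 w_0 w_1$), Claim~\ref{clm:v0-v2-no-comm-nbr} (to eliminate $y_0=w_2$), and Claim~\ref{clm:v1-v3-no-comm-nbr} (to eliminate $y_1=x_3$ or $y_3=x_1$).

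The fourth and most delicate stage is showing $y_0,y_1,y_3$ are pairwise distinct. The case $y_1=y_3$ again supplies a non-$C$ common neighbor of $w_1,w_3$ and is handled by Claim~\ref{clm:v1-v3-no-comm-nbr}. The critical case is $y_0=y_1$ (the case $y_0=y_3$ being symmetric): here $x_0$ and $w_1$ share the three common neighbors $w_0,x_1,y_0$, yielding an induced $K_{2,3}$ on $\{x_0,w_1\}\cup\{w_0,x_1,y_0\}$. To derive a contradiction, I will examine the face $F'$ of $G$ lying on the opposite side of the edge $w_0 w_1$ from $C$: its boundary necessarily passes through $x_0,w_0,w_1$ consecutively at $w_0$, and at $w_1$ continues through either $x_1$ or $y_0$. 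If $F'$ is a 4-face, then $F'$ is either $w_0 w_1 x_1 x_0$ or $w_0 w_1 y_0 x_0$; in either case $F'$ is itself a 4-face of $G$ with $w_0$ as its unique 3-vertex, so the analog of Claim~\ref{clm:v1-v3-no-comm-nbr} applied to $F'$ forbids any non-$F'$ common neighbor of the two vertices of $F'$ adjacent to $w_0$, but whichever of $x_1,y_0$ does not lie on $F'$ supplies exactly such a common neighbor of $x_0$ and $w_1$. If instead $F'$ has length at least $5$, then the arc of its boundary from $w_1$ back to $x_0$ must contain an interior vertex adjacent to $w_1$ or to $x_0$, but the only candidates are $x_1,y_0,x_3$ and the chords $x_1 y_0$, $x_1 x_3$, $y_0 x_3$ are all forbidden by triangle-freeness (each would close a triangle through $w_1$ or through $x_0$); combined with minimum degree $3$ and the planar position of the $K_{2,3}$ already pinned down around $\{x_0,w_1\}$, no admissible $F'$ remains.

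The hardest step is the $y_0=y_1$ subcase in the fourth stage, which requires the planar-embedding case split on $F'$; the remaining distinctness checks reduce uniformly to triangle-freeness together with Claim~\ref{clm:v0-v2-no-comm-nbr} and Claim~\ref{clm:v1-v3-no-comm-nbr}.
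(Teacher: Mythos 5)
Your stages (1)--(3) and the $y_1\neq y_3$ check in stage (4) match what the paper does (the paper compresses these to a sentence, citing triangle-freeness and Claim~\ref{clm:v1-v3-no-comm-nbr}), so the real divergence is the critical case $y_0=y_1$. The paper handles it with the standard discharging step: it takes $H$ to be the subgraph induced by $V(C)\cup\{x_0,x_1,x_3,y_0\}$, checks that exactly $20$ edges leave or lie in $H$ (using Claims~\ref{clm:v0-v2-no-comm-nbr} and~\ref{clm:v1-v3-no-comm-nbr} and triangle-freeness to rule out extra chords), collects the induced star $\{x_0,w_0,w_1,w_3\}$, and applies Observation~\ref{obs:meta-exclusion} with $(\alpha,\beta,\lambda)=(8,20,4)$ to contradict Inequality~\eqref{lpc:2v-4v}. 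You instead try a pure planarity argument about the face $F'$ on the far side of $w_0w_1$ from $C$.

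Your $4$-face subcase is sound: $F'\in\{w_0w_1x_1x_0,\,w_0w_1y_0x_0\}$, each is a $4$-face of $G$ with $w_0$ as its unique $3$-vertex (Corollary~\ref{cor:4face-one-4ver} forces $x_1,y_0$ to be $4$-vertices), so the analogue of Claim~\ref{clm:v1-v3-no-comm-nbr} applied to $F'$ is violated by whichever of $x_1,y_0$ is not on $F'$. The gap is in the $5^+$-face subcase. The ``chord'' reasoning you give -- that the two interior boundary vertices must be among $\{x_1,y_0,x_3\}$ and the edges $x_1y_0,x_1x_3,y_0x_3$ are forbidden -- only rules out $F'$ having length exactly $5$; for a $6^+$-face there is no forced edge among $\{x_1,y_0,x_3\}$ at all, and the boundary between $v_1$ and $v_k$ could in principle pass through vertices you have not constrained. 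The appeal to ``the planar position of the $K_{2,3}$'' and ``minimum degree $3$'' gestures at the right idea but is not an argument. What actually closes this is a $2$-connectivity observation: once you fix which $4$-face of the $K_{2,3}$ (namely $w_0x_0y_0w_1$ or $w_0x_0x_1w_1$) contains the region of $C$, the complementary $4$-cycle ($w_0x_0x_1w_1$, say) has none of $w_0,w_1,x_0$ sending an edge into its interior, so any interior vertex would make $x_1$ a cut vertex, contradicting Claim~\ref{clm:2-edge-conn}; hence $F'$ is exactly that $4$-cycle. With that supplied, your route works, but as written the $5^+$-face case is unproved. The paper's counting argument avoids the embedding case analysis entirely and reuses machinery already set up, which is why it is shorter.
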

\begin{proof}
 By Claim~\ref{clm:v1-v3-no-comm-nbr}, two vertices $x_1$ and $x_3$ are distinct and two vertices $y_1$ and $y_3$ are distinct. Since $G$ is triangle-free, $y_0\not= x_3$ and $y_0 \not= x_1$. To prove the claim, we only need to prove that $y_0 \not= y_1$ and $y_0\not= y_3$. By symmetry, it suffices to prove $y_0 \not= y_1$. Suppose otherwise. Let $H$ be the subgraph of $G$ induced by $V(C) \cup \{x_0,x_1,x_3,y_0\}$ (see Figure~\ref{fig:fourF-one3v}(c)). Since $G$ is triangle-free, $y_0$ and $x_3$ are non-adjacent and $y_0$ and $w_2$ are non-adjacent. By Claim~\ref{clm:v1-v3-no-comm-nbr}, $y_0$ and $w_3$ are non-adjacent. Thus, $m(G\setminus H) = m(G)-20$. Since we can collect $\{x_0,w_0,w_1,w_3\}$ from $H$, by Observation~\ref{obs:meta-exclusion} with $L = H$ and $(\alpha,\beta,\gamma,\eta,\lambda) =(8,20,0,0,4)$, $4 -8a + 20b $ must be negative, contradicting Inequality~\eqref{lpc:2v-4v}. \qedhere
\end{proof}

\begin{claim}\label{clm:xyz-adj}
At least one of $y_0y_1, y_1y_3,y_0y_3$ is an edge of $G$.
\end{claim}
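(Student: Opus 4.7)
The approach is proof by contradiction: assume $y_0y_1$, $y_1y_3$, and $y_0y_3$ are all non-edges of $G$, and derive a contradiction via Observation~\ref{obs:meta-exclusion} applied to the subgraph $H$ of $G$ induced by $Z$.

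First I would enumerate the edges of $H$ forced by the construction. By Claim~\ref{clm:Z-distinct}, $n(H) = 10$. The forced edges are the $4$-face boundary $w_0w_1, w_1w_2, w_2w_3, w_3w_0$; the edge $w_0x_0$; the edges $w_1x_1$ and $x_0x_1$ (from the common neighbor $x_1$ of $w_1$ and $x_0$ guaranteed by Claim~\ref{clm:3-4-adj}), and symmetrically $w_3x_3$ and $x_0x_3$; and finally $x_0y_0, w_1y_1, w_3y_3$ by definition of $y_0, y_1, y_3$. This gives at least $12$ edges. Triangle-freeness together with Claims~\ref{clm:v0-v2-no-comm-nbr} and~\ref{clm:v1-v3-no-comm-nbr} rule out most other potential edges within $Z$; the remaining candidates are $y_0w_2$, $y_1x_3$, $y_3x_1$, and the three $y_iy_j$ edges that our assumption forbids.

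The key structural observation is that $w_0, w_1, w_3, x_0$ have all their neighbors in $Z$, so they contribute zero external edges; moreover, since $H$ is connected, by Claim~\ref{clm:meta-cub-excl} $G \setminus H$ contains no $Q_3^{1-}$ subgraph and no $T_6$ component, giving $\gamma = \eta = 0$. I would then target the candidate independent collectible set $X = \{w_0, w_2, y_0, y_1, y_3\}$. Its independence reduces to verifying: the $y_i$'s are pairwise non-adjacent by assumption; $w_0 \not\sim w_2$ by triangle-freeness of $C$; $w_0 \not\sim y_i$ since $w_0$'s neighborhood is exactly $\{w_1, w_3, x_0\}$; and $y_1 \not\sim w_2$ and $y_3 \not\sim w_2$ by triangle-freeness through $w_1w_2$ and $w_3w_2$. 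The only adjacency not settled by prior claims is $y_0w_2$, which forces a case split.

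Finally, Observation~\ref{obs:meta-exclusion} with $\lambda = 5$, $\alpha = 10$, and $\beta$ equal to the count of edges incident to $V(H)$ would yield an inequality of the form $5 - 10a + \beta b \geq 0$, which I aim to contradict by combining earlier constraints such as~\eqref{lpc:Q-3-1a},~\eqref{lpc:min-deg-3a}, and~\eqref{lpc:min-deg-3b}. In the exceptional case $y_0 \sim w_2$, the extra edge contributes to $\beta$ and strengthens the inequality; the independence of $X$ fails, and I would replace $X$ with a different independent $5$-set inside $Z$, or pass to a modified graph $G'$ (e.g.\ obtained by deleting $\{w_0,x_0\}$ and possibly adding a compensating edge like $y_0w_1$ if it maintains triangle-freeness) and reconstruct the forest using the minimality of $G$ together with Claim~\ref{clm:meta-cub-excl}. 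The principal obstacle is verifying that $X$ is genuinely collectible against every induced forest $F$ of $G \setminus H$: because $y_0, y_1, y_3$ may each have two or more external edges, the simple ``at most one external edge per collected vertex'' criterion can fail, and the verification must either exploit the planar embedding (to argue the external neighbors of the $y_i$'s sit in distinct components of every such $F$) or be replaced by a graph-modification argument paralleling Case~3 of Lemma~\ref{lm:no-4face-2-4vers}.
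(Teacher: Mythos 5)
Your proposal has a genuine gap, and it is the one you yourself flag at the end: the candidate set $\{w_0,w_2,y_0,y_1,y_3\}$ is \emph{not} collectible from $H$. Each of $y_0,y_1,y_3$ (and $w_2$) has only one edge into $Z$; all of its remaining incident edges leave $Z$. For an arbitrary induced forest $F$ of $G\setminus H$, two external neighbors of, say, $y_1$ may already lie in the same tree of $F$, so adding $y_1$ creates a cycle. Planarity of the embedding gives no control over which component of $F$ those external neighbors land in, so the ``argue the external neighbors sit in distinct components'' route cannot be made to work. There is no way to rescue a five-element collectible set inside $Z$, because the only vertices of $Z$ with all neighbors inside $Z$ are $w_0,x_0,w_1,w_3$, which contain the two adjacent pairs $w_0w_1$ and $w_0w_3$.

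The paper avoids this entirely by \emph{not} removing $Z$ and by \emph{not} collecting the $y_i$'s. It deletes only $N=\{x_0,x_1,x_3,w_1,w_2,w_3\}$, keeps $w_0$ and $y_0,y_1,y_3$ in the smaller graph, and adds three compensating edges $w_0y_0,w_0y_1,w_0y_3$ (this is where the assumption that the $y_iy_j$ are non-edges is used, to keep $G'$ triangle-free). An induced forest $F'$ of $G'$ is then augmented by exactly $\{x_0,w_1,w_3\}$ — vertices whose entire neighborhood lies inside $Z$ — and the added edges $w_0y_i$ guarantee that any cycle this could create in $G$ (via the paths $w_0$--$x_0$--$y_0$, $w_0$--$w_1$--$y_1$, $w_0$--$w_3$--$y_3$) would already have been a cycle in $F'$ in $G'$. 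One must also check $p(G')+q(G')\leq 1$ and rule out $q(G')=1$ (using that $w_0$'s new neighbors are $3^+$-vertices), landing on the constraint $3-6a+15b-c\geq 0$. So your ``graph-modification fallback'' is indeed the right instinct, but the specific modification — which vertices to delete, which three edges to add, and which three vertices to collect — is the entire content of the proof, and your proposal does not supply it.
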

\begin{proof}
Suppose that $y_0y_1, y_1y_3,y_0y_3$ are non-edges of $G$. Let $N = \{x_0,x_1,x_3,w_1,w_2,w_3\}$. Let $G'$ be the graph obtained from $G$ by removing vertices in $N$ and adding edges $\{w_0y_0,w_0y_1,w_0y_3\}$ (see Figure~\ref{fig:fourF-symmetry}(a)). By Claim~\ref{clm:meta-cub-excl}, the graph obtained from $G$ by removing vertices in $N$ has no $T_6$ component and $Q_3^{1-}$ subgraph. Thus, any $T_6$ component and $Q_3^{1-}$ subgraph of $G'$ must contain $w_0$. That implies $p(G') + q(G') \leq 1$. If $q(G') = 1$, let $H$ be a $T_6$ component of $G$. Then $w_0$ must be a 3-vertex of $H$. Since any 3-vertex of a $T_6$ component is adjacent to a 2-vertex, at least one neighbor of $w_0$ must be a 2-vertex in $G'$. However, $w_0$'s neighbors all are $3^+$-vertices in $G'$. Thus, $q(G') = 0$. Since $G$ is a counter-example of minimal order, $G'$ has an induced forest $F'$ of order at least $an(G') - b m(G') - cp(G')$. By adding $x_0,w_1,w_3$ to $F'$, we obtain an induced forest $F$ of order at least $an(G') - b m(G') -cp(G')+ 3$ in $G$. Since $n(G') = n(G)-6$ and $m(G') = m(G) - 15$, we have:
\begin{equation*}
an(G') - b m(G')  - cp(G')+ 3\geq an(G) - bm(G) + 3 - 6a + 15b - c
\end{equation*}
 Thus, we obtain contradiction by adding Inequality~\eqref{lpc:xyz-adj} to $\lp$.
\begin{equation} \label{lpc:xyz-adj}
3 - 6a + 15b - c \geq 0
\end{equation}
\end{proof}
\begin{figure}[tbh]
  \centering
   \includegraphics[height=1.2in]{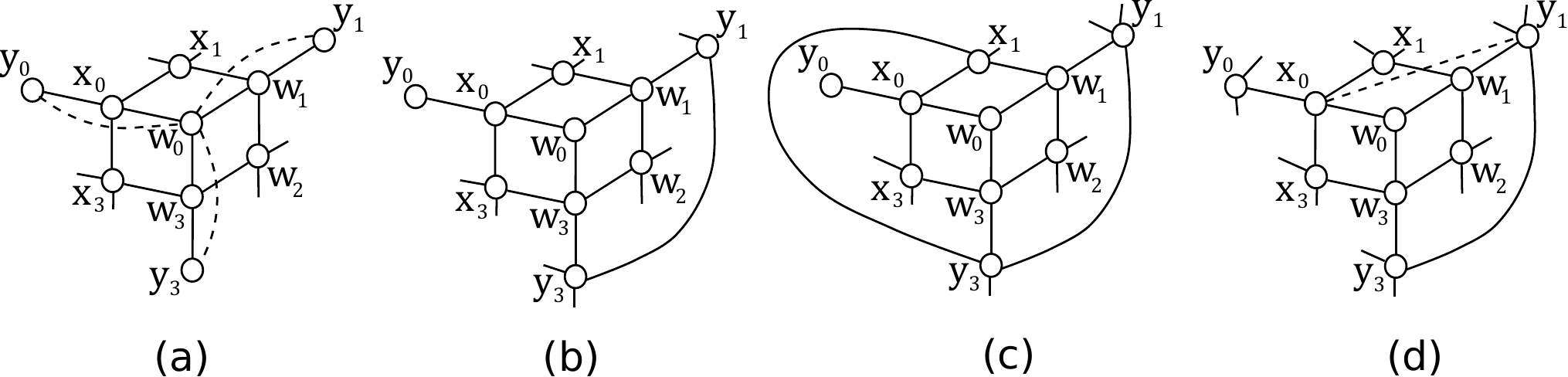}
      \caption{(a) A configuration in the proof of Claim~\ref{clm:xyz-adj} (b) A configuration in the proof of Claim~\ref{clm:x-y-4vers} (c) A configuration in the proof of Claim~\ref{clm:y-u1-non-adj} (d)  A configuration in the proof of Lemma~\ref{lm:no-4face-1-3vers}}
  \label{fig:fourF-symmetry}
\end{figure} 
\noindent Herein, we regard 4-face $C$ as a 4-cycle so that we can speak of the symmetry of neighbors of $w_0$ in $G$ (see Figure~\ref{fig:fourF-symmetry}(a)).  By symmetry, we can assume w.l.o.g that $y_1y_3$ is an edge of $G$.
\begin{claim}\label{clm:x-y-4vers}
Two vertices $y_1$ and $y_3$ are 4-vertices. 
\end{claim}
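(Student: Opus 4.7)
I would argue by contradiction, exploiting the $y_1 \leftrightarrow y_3$ symmetry to reduce to the case where $y_1$ is a 3-vertex. Since $y_1y_3$ is an edge and both $w_1$ and $w_3$ are 4-vertices, the 4-cycle $w_1y_1y_3w_3$ contains $y_1$ as a 3-vertex, and Corollary~\ref{cor:4face-one-4ver} forbids a second 3-vertex on any 4-cycle of $G$, forcing $y_3$ to be a 4-vertex. Let $z$ denote the third neighbor of $y_1$ (other than $w_1$ and $y_3$). Because $y_1$ is now a 3-vertex adjacent to the 4-vertex $w_1$, Claim~\ref{clm:no-3-3-4} forbids $y_1$ from having any 3-vertex neighbor at all, so $z$ must also be a 4-vertex.

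The next step is to harvest forced adjacencies using Claim~\ref{clm:3-4-adj}. Applied to the 3-vertex $y_1$ together with its 4-vertex neighbor $z$, the other two neighbors $w_1,y_3$ of $y_1$ must share a common neighbor distinct from $y_1$. Among $w_1$'s remaining neighbors $\{w_0,w_2,x_1\}$, the vertex $w_0$ is excluded because its full neighborhood is $\{w_1,w_3,x_0\}$, and $w_2$ is excluded because $w_2\sim y_3$ together with the edge $w_2w_3$ would yield the triangle $w_2w_3y_3$; hence $x_1\sim y_3$. The parallel application of Claim~\ref{clm:3-4-adj} to $(y_1,y_3)$ yields a common neighbor of $w_1$ and $z$, giving rise to a short case split: either $x_1\sim z$ (making $x_1$ a 4-vertex with forced neighborhood $\{x_0,w_1,y_3,z\}$ and creating the 4-cycle $y_1y_3x_1z$ whose unique 3-vertex is $y_1$), or $w_2\sim z$ (placing $z$ among the non-$C$ neighbors of $w_2$).

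With this rigid local picture in hand, I would apply Observation~\ref{obs:meta-exclusion} to the induced subgraph $H$ of $G$ on an appropriate enlargement of $V(C)\cup\{x_0,x_1,y_1,y_3,z\}$, identifying a collectable set $X\subseteq V(H)$ whose size $\lambda$, together with $\alpha=|V(H)|$ and the (now fairly large) edge count $\beta$, produces an inequality contradicting a previously added inequality of $\mathcal{LP}$, most naturally \eqref{lpc:min-deg-3a} or \eqref{lpc:2v-4v}. Because $H$ is connected by construction, Claim~\ref{clm:meta-cub-excl} ensures $\gamma=\eta=0$ in this application, so no $c$ or $d$ terms enter the resulting inequality.

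The main obstacle I anticipate is explicitly constructing the collectable set $X$. The forced edges $x_1y_3$, $y_1y_3$, $y_1z$ (and, in the first subcase, $x_1z$) pack several short cycles into the interior of $H$, so most candidate subsets of $V(H)$ either fail to induce a forest or carry too many external edges to $G\setminus H$ to be safely added to an arbitrary induced forest of $G\setminus H$. I further expect a secondary case split according to whether $z$ coincides with one of the already-named vertices $x_3$ or $y_0$ (both possibilities compatible with Claim~\ref{clm:Z-distinct}, which only constrains the vertices of $Z$); in each such coincidence $H$ loses a vertex but simultaneously gains additional forced edges, so the Observation~\ref{obs:meta-exclusion} budget tightens rather than loosens, preserving the contradiction.
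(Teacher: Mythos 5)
There is a genuine gap, and also a concrete error in the very first deduction. Your reduction ``$y_1$ is a 3-vertex $\Rightarrow$ $y_3$ is a 4-vertex'' is justified by invoking Corollary~\ref{cor:4face-one-4ver} on ``the 4-cycle $w_1y_1y_3w_3$,'' but $w_1$ and $w_3$ are \emph{opposite} vertices of the 4-face $C=w_0w_1w_2w_3$ and are not adjacent, so $w_1y_1y_3w_3$ is a path, not a cycle. The conclusion is correct, but the right tool is Claim~\ref{clm:no-3-3-4}: since $y_1$ is a 3-vertex adjacent to the 4-vertex $w_1$, it cannot simultaneously have a 3-vertex neighbor, so $y_3$ must be a 4-vertex. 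That is exactly how the paper proceeds.

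More importantly, after that point your plan and the paper's argument diverge, and yours is not brought to completion. You introduce the extra vertex $z$, apply Claim~\ref{clm:3-4-adj} twice to force adjacencies such as $x_1\sim y_3$, and then spawn a case split on $x_1\sim z$ versus $w_2\sim z$ plus further coincidence checks ($z=x_3$, $z=y_0$); but you explicitly say you do not know which induced subgraph $H$ to take, which set $X$ is collectable, or which linear constraint results — these are precisely the nontrivial parts of the argument. The paper avoids all of this machinery: it takes the fixed 8-vertex subgraph $H$ induced by $\{w_0,w_1,w_3,x_0,x_1,x_3,y_1,y_3\}$, uses triangle-freeness and planarity to show that at most one of $y_1x_3$, $x_1y_3$ is present (so $m(G\setminus H)\le m(G)-19$, using that $x_0,x_1,x_3$ are all forced to be 4-vertices by Corollary~\ref{cor:4face-one-4ver} on the 4-cycles $x_0x_1w_1w_0$ and $x_0x_3w_3w_0$), collects the four vertices $\{y_1,x_0,w_0,w_3\}$, and invokes Observation~\ref{obs:meta-exclusion} with $(\alpha,\beta,\gamma,\eta,\lambda)=(8,19,0,0,4)$ to obtain the new constraint $4-8a+19b\ge 0$ (Inequality~\eqref{lpc:x-y-4v}). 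Your plan neither identifies this $H$ nor the collectable set nor the target inequality, so as written it does not establish the claim.
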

\begin{proof}
Suppose otherwise. We can assume~w.l.o.g that  $y_1$ is a 3-vertex. By Claim~\ref{clm:no-3-3-4}, $y_3$ must be a 4-vertex (see Figure~\ref{fig:fourF-symmetry}(b)). Let $H$ be the subgraph of $G$ induced by $\{w_0,w_1,w_3,x_0,x_1,x_3,y_1,y_3\}$. Since $G$ is triangle-free, two vertices $y_1$ and $x_1$ are non-adjacent and two vertices $x_3$ and $y_3$ are non-adjacent. By planarity, if $y_1x_3$ is an edge of $G$, then $x_1y_3$ is non-edge of $G$ and vice versa. Thus, $m(G\setminus H)\leq m(G) - 19$. Since we can collect $\{y_1,x_0,w_0,w_3\}$ from $H$, by Observation~\ref{obs:meta-exclusion} with $L = H$ and $(\alpha,\beta,\gamma,\eta,\lambda) =(8,19,0,0,4)$, $4 -8a + 19b $ must be negative.  Thus, we obtain contradiction by adding Inequality~\eqref{lpc:x-y-4v} to $\lp$.
\begin{equation} \label{lpc:x-y-4v}
4 -8a + 19b \geq 0
\end{equation}
\end{proof}

\begin{claim}\label{clm:y-u1-non-adj}
Two vertices $y_3,x_1$ are non-adjacent and two vertices $x_3,y_1$ are non-adjacent.
\end{claim}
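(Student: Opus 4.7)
The plan is to mirror the structure of Claim~\ref{clm:x-y-4vers} and squeeze one more unit of edge-slack out of the same 8-vertex configuration. Because the whole setup around the 4-face $C = w_0w_1w_2w_3$ is symmetric under swapping the indices $1 \leftrightarrow 3$ (with the corresponding swap $x_1 \leftrightarrow x_3$, $y_1 \leftrightarrow y_3$), it suffices to prove the single non-adjacency $y_3 x_1 \notin E(G)$. First I would assume $y_3 x_1 \in E(G)$ for contradiction; the planarity observation already recorded inside the proof of Claim~\ref{clm:x-y-4vers} then gives $y_1 x_3 \notin E(G)$ for free, so exactly one of the two diagonal edges is present.

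Next I would take the same subgraph $H$ induced by $\{w_0, w_1, w_3, x_0, x_1, x_3, y_1, y_3\}$ as in Claim~\ref{clm:x-y-4vers}. The change from that setting is that Claim~\ref{clm:x-y-4vers} now upgrades both $y_1$ and $y_3$ to $4$-vertices, so the sum of $G$-degrees on $V(H)$ becomes $3 + 7\cdot 4 = 31$; and the newly assumed edge $x_1y_3$ pushes $|E(H)|$ up to $11$ (one checks using triangle-freeness that no further edges among the eight vertices can exist: $w_1y_3$, $w_3y_1$, $x_1x_3$, $y_3x_3$, $x_0y_1$, $x_0y_3$ all produce triangles via common neighbors already present in $H$). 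Hence $m(G) - m(G\setminus H) = 31 - 11 = 20$, one better than the $19$ available in Claim~\ref{clm:x-y-4vers}. Since $H$ is connected, Claim~\ref{clm:meta-cub-excl} ensures $p(G\setminus H) = q(G\setminus H) = 0$, so in the Observation~\ref{obs:meta-exclusion} framework we get $\gamma = \eta = 0$.

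The main step, and the one I expect to be the main obstacle, is to exhibit a collectible $4$-vertex subset of $H$ whose external footprint is clean enough to extend a max induced forest of $G\setminus H$. The candidate I would use is the natural analogue of the set collected in Claim~\ref{clm:x-y-4vers}, namely $S = \{y_3, x_0, w_0, w_1\}$. One verifies that $G[S]$ consists only of the two edges $x_0w_0$ and $w_0w_1$ (so it is a path together with an isolated $y_3$): the other pairs are non-edges because $x_1$ is a common neighbor of $\{y_3, x_0\}$, of $\{y_3, w_1\}$ and of $\{x_0, w_1\}$ (using the assumed edge $x_1y_3$), while $w_3$ is a common neighbor of $\{y_3, w_0\}$. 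Crucially, each vertex of $S$ has at most one neighbor in $V(G)\setminus V(H)$: $w_0$ has none, $x_0$'s only outside neighbor is $y_0$, $w_1$'s is $w_2$, and $y_3$'s is its one remaining fourth neighbor. This is the same ``at most one external per collected vertex'' situation that justified the collect step in Claim~\ref{clm:x-y-4vers}, so the same informal argument applies.

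To finish, I would apply Observation~\ref{obs:meta-exclusion} with $L = H$ and $(\alpha, \beta, \gamma, \eta, \lambda) = (8, 20, 0, 0, 4)$, concluding that $4 - 8a + 20b$ must be negative. But $b \geq 0$ gives $4 - 8a + 20b \geq 4 - 8a + 19b \geq 0$ by Inequality~\eqref{lpc:x-y-4v}, which is the contradiction. The symmetric case $y_1 x_3 \in E(G)$ is handled by the identical argument after relabeling $1\leftrightarrow 3$, so both non-adjacencies hold, establishing the claim.
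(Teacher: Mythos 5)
Your argument is essentially the paper's argument: assume $y_3x_1\in E(G)$ (by symmetry), isolate an $8$-vertex subgraph $H$ with $m(G)-m(G\setminus H)=20$, collect $4$ vertices, and invoke Observation~\ref{obs:meta-exclusion} with $(\alpha,\beta,\gamma,\eta,\lambda)=(8,20,0,0,4)$ to contradict an already-available inequality. The only real divergence is the choice of $H$: the paper takes $H=\{w_0,w_1,w_2,w_3,x_0,x_1,x_3,y_3\}$ and collects $\{x_0,w_0,w_1,w_3\}$, whereas you reuse the $H=\{w_0,w_1,w_3,x_0,x_1,x_3,y_1,y_3\}$ from Claim~\ref{clm:x-y-4vers} (swapping $y_1$ for $w_2$) and collect $\{y_3,x_0,w_0,w_1\}$; both give the same edge count and the same excluding inequality $4-8a+20b\ge 0$, one contradicting~\eqref{lpc:x-y-4v} and the other~\eqref{lpc:2v-4v} (note $4-8a+20b=4(1-2a+5b)$, so these are equivalent). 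One small slip in your edge count: you say $x_0y_1$ is forbidden by a common neighbor in $H$, but $x_0$ and $y_1$ have no common $H$-neighbor; the correct reason is simply that $x_0$'s four neighbors are already accounted for as $w_0,x_1,x_3,y_0$ and $y_0\neq y_1$ by Claim~\ref{clm:Z-distinct}. With that patched, the proposal is correct and matches the paper's method.
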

\begin{proof}
By symmetry, we only need to prove $y_3$ and $x_1$ are non-adjacent. Suppose that $y_3x_1$ is an edge of $G$(see Figure~\ref{fig:fourF-symmetry}(c)). Let $C_6$ be cycle $ x_1w_1w_2w_3y_3$. Two vertices $y_1$ and $x_3$ are embedded in different sides of $C_6$. Let $H$ be the subgraph induced by $\{w_0,w_1,w_2,w_3,x_0,x_1,x_3,y_3\}$. Observe that we can collect $x_0,w_0,w_1,w_3$ from $H$. Since $m(G\setminus H) \leq m(G)-20$,  by Observation~\ref{obs:meta-exclusion} with $L = H$ and $(\alpha,\beta,\gamma,\eta,\lambda) =(8,20,0,0,4)$, $4 - 8a + 20b $ must be negative, contradicting Inequality~\eqref{lpc:2v-4v}.
\end{proof}

\begin{lemma}\label{lm:no-4face-1-3vers}
 Graph $G$ has no 4-face with exactly one 3-vertex.
\end{lemma}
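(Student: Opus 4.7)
My plan is to apply Observation~\ref{obs:meta-exclusion} to the $9$-vertex subgraph $L$ of $G$ induced by $V(C) \cup \{x_0, x_1, x_3, y_1, y_3\}$. Using the claims already established, I first count that $L$ has $12$ internal edges (the four edges of $C$, together with $w_0 x_0$, $x_0 x_1$, $x_0 x_3$, $w_1 x_1$, $w_3 x_3$, $w_1 y_1$, $w_3 y_3$, and $y_1 y_3$); and since $G$ has maximum degree at most $4$ by Claim~\ref{clm:no-deg-5}, there are at least $9$ edges from $L$ to $V(G)\setminus V(L)$ (two each from $w_2, y_1, y_3$ and one each from $x_0, x_1, x_3$), so $\beta \geq 21$. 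Since $L$ is connected, Claim~\ref{clm:meta-cub-excl} gives $p(G\setminus L) = q(G\setminus L) = 0$, so $\gamma = \eta = 0$.

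The central step is to show that any induced forest $F'$ of $G\setminus L$ can be extended by $\lambda = 5$ vertices of $L$ to an induced forest of $G$. The natural core is $\{w_0, w_1, w_3, x_0\}$, which forms a star in $G[L]$ centered at $w_0$: since $w_0, w_1, w_3$ have no edge to $V(G)\setminus V(L)$ and $x_0$ has only one (to $y_0$), a direct cycle-tracing argument shows $F' \cup \{w_0, w_1, w_3, x_0\}$ is always an induced forest in $G$ (the leaves $w_1, w_3$ are isolated in $F'\cup X\setminus\{w_0\}$, so any hypothetical cycle closing through the star dead-ends). For the fifth vertex, I would split into cases based on the degrees of $x_1, x_3$ and the external adjacencies of $y_1, y_3, w_2$: in each case, either a direct choice (for instance, adding $x_1$ after swapping $w_1$ out of the core to avoid the $4$-cycle $w_0 w_1 x_1 x_0$) gives a $5$-vertex extension, or an edge-modification argument in the style of Lemma~\ref{lm:no-4face-2-4vers} is used to handle subcases where the direct attempt fails.

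With $\lambda = 5$, $\alpha = 9$, $\beta \geq 21$, and $\gamma = \eta = 0$, Observation~\ref{obs:meta-exclusion} yields the constraint $5 - 9a + 21b \geq 0$. This is implied by the existing LP, since
\[
5 - 9a + 21b = (5 - 8a + 13b) + (5b - a) + 3b,
\]
and each summand is non-negative by Inequalities~\eqref{seq:Q-3-1-ex}, \eqref{seq:our-deg-5}, and $b \geq 0$, respectively. The resulting contradiction with $G$ being a counter-example completes the proof.

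The main obstacle is the robust selection of the fifth vertex: since the only $L$-vertices with zero outside edges are $w_0, w_1, w_3$ while the others have one or two, the case analysis must ensure that no cycle arises between the $5$-vertex extension and $F'$. I expect several subcases, depending on whether $x_1, x_3$ are $3$- or $4$-vertices and on the adjacencies of their outside neighbors, each resolved by a targeted swap in the core or an auxiliary modification of $F'$ in the spirit of the edge-addition trick used in Lemma~\ref{lm:no-4face-2-4vers}.
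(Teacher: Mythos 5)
Your setup and LP arithmetic are fine (indeed $5 - 9a + 21b = (5 - 8a + 13b) + (5b-a) + 3b$), but the crucial claim that you can \emph{collect} $5$ vertices from the $9$-vertex subgraph $L = G[V(C)\cup\{x_0,x_1,x_3,y_1,y_3\}]$ is a genuine gap, and I believe it is in fact false. Recall what ``collect'' demands: the chosen set $Y$ must extend \emph{every} induced forest $F'$ of $G\setminus L$. In the worst case all of $Y$'s neighbors in $G\setminus L$ lie in a single tree of $F'$, so you need $G[Y]$ to be a forest in which each component sends at most one edge out of $L$. Now count: the only vertices of $L$ with \emph{no} edge leaving $L$ are $w_0, w_1, w_3$ (each of $w_2, y_1, y_3$ has two, $x_0$ has one, and $x_1, x_3$ each have two --- note that $x_1$ and $x_3$ are forced to be $4$-vertices by Corollary~\ref{cor:4face-one-4ver} applied to the $4$-cycles $x_0x_1w_1w_0$ and $x_0x_3w_3w_0$, so your ``one each from $x_1, x_3$'' undercounts $\beta$, though that only helps you). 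Since $w_0$ is adjacent to both $w_1$ and $w_3$, the three $0$-external vertices form one connected component; every remaining vertex of $L$ is adjacent to at least one of $w_1, w_3, x_0$, so you cannot split off a second component. Thus the best you can do is $\{w_0, w_1, w_3, x_0\}$: a single tree with exactly one external edge ($x_0 y_0$), and any fifth vertex either closes a cycle inside $L$ or puts a second external edge on the same component. So $\lambda = 4$ is the true limit, and $4 - 9a + 23b < 0$ at the optimizer $a = 25/27$, $b = 5/27$. Your plan B of ``an edge-modification argument in the style of Lemma~\ref{lm:no-4face-2-4vers}'' is the right instinct, but it is a different bookkeeping, not a patch on the $(\alpha,\beta,\gamma,\eta,\lambda) = (9, 21, 0, 0, 5)$ instance of Observation~\ref{obs:meta-exclusion}.

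The paper's actual proof is precisely such a modification argument and is structured quite differently. It deletes only the $7$-vertex set $J = \{w_0, w_1, w_2, w_3, x_1, x_3, y_3\}$ (keeping $x_0$ and $y_1$ in the graph) and \emph{adds} the edge $x_0 y_1$ to form $G'$, verifies that $G'$ is triangle-free with $p(G') = q(G') = 0$ (the $Q_3$ case is killed because $x_0$ becomes a $2$-vertex in $G'$, and the $T_6$ case is ruled out via a separating-cycle argument showing $v_6$ would be a $2$-vertex of $G$), then puts back just the three vertices $\{w_0, w_1, w_3\}$. The inserted edge $x_0 y_1$ is what blocks cycles: any path from $x_0$ to $y_1$ in the forest $F'$ of $G'$ must be the edge $x_0 y_1$ itself, which does not exist in $G$, so attaching $w_0, w_1$ between them is safe. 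With $n(G') = n(G) - 7$ and $m(G') = m(G) - 19$, the resulting constraint is $3 - 7a + 19b \geq 0$, which (unlike your $4 - 9a + 21b$) is satisfied at the optimizer. If you want to rescue your approach, you would need to build the edge-modification into the accounting from the start rather than bolt it onto Observation~\ref{obs:meta-exclusion}.
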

\begin{proof}
Since $G$ is triangle-free, at most one of $y_0y_1,y_0y_3$ is an edge of $G$. By symmetry, we can assume w.l.o.g that $y_0$ and $y_1$ are non-adjacent. Let $J = \{w_0,w_1,w_2,w_3,x_1,x_3,y_3\}$. Let $G'$ be the graph obtained from $G$ by removing vertices in $J$ and adding edge $x_0y_1$  (see Figure~\ref{fig:fourF-symmetry}(d)). Since two vertices $y_0, y_1$ are non-adjacent and two vertices $y_1,x_3$ are non-adjacent by Claim~\ref{clm:y-u1-non-adj}, $G'$ is triangle-free. By Claim~\ref{clm:meta-cub-excl}, the graph obtained by removing $J$ from $G$ has no $T_6$ component and $Q_3^{1-}$ subgraph. Thus, any $T_6$ and $Q_3^{1-}$ subgraph of $G'$ must contains edge $x_0y_1$. Since $x_0$ is a $2$-vertex of $G'$, $G'$ has no $Q_3^{1-}$ subgraph. We now argue that $G'$ contains no $T_6$ component.

Suppose that $G'$ contains a $T_6$ component. Then, $x_0$ must be one of two 2-vertices of $T_6$. By symmetry of $T_6$, we can assume w.l.o.g that $x_0$ is $v_5$. Thus, edge $x_0y_1$ is $v_2v_5$ or $v_5v_3$.  Let $C_7$ be cycle $v_1v_2v_4v_3$. $C_7$ separates $v_6$ and $v_5$ in both $G'$ and $G$. Thus, $C_7$ also separates $v_6$ from every vertex reachable from $v_5$ in $G\setminus C_7$. That implies $v_6$ is also a 2-vertex in $G$, contradicting Lemma~\ref{lm:min-deg-3}. Thus, $G$ contains no $T_6$ component.

Since $G$ is a minimal counter-example, $G'$ has an induced forest $F'$ of order at least $an(G') - b m(G')$. Let $F = F'\cup\{w_0,w_1,w_3\}$. $F$ is an induced forest of $G$ of order at least $an(G') -bm(G') + 3$. Since $n(G') = n(G) - 7$ and $m(G') = m(G)-19$, we have:
\begin{equation*}
|F| \geq an(G) -bm(G) + 3 - 7 a + 19b
\end{equation*}
Thus, we obtain contradiction by adding Inequality~\eqref{lpc:4f-1-3v} to $\lp$.
\begin{equation} \label{lpc:4f-1-3v}
3-7a + 19b  \geq 0
\end{equation}
\end{proof}

\subsection{Excluding a 5-face with at least four 3-vertices}\label{subsec:ex-5face-4-3vers}

Let $w_0,w_1,w_2,w_3,w_4$  be vertices in clock-wise order of a 5-face $C$ of $G$ such that $C$ has at most one $4$-vertex. Let $X = \{x_0,x_1,x_2,x_3,x_4\}$ be a set of vertices such that $x_i$ is a non-$C$ neighbor of $w_i$ for all $0 \leq  i \leq 4$.
\begin{claim}\label{clm:5face-no-4ver}
Face $C$ has no 4-vertex.
\end{claim}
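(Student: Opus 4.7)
The plan is a direct application of Claim~\ref{clm:no-3-3-4}, which asserts that $G$ has no 3-vertex with both a 3-vertex and a 4-vertex among its neighbors. So the proof is essentially a one-line corollary once the hypothesis on $C$ is unpacked.

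Suppose toward contradiction that $C$ contains a 4-vertex. By hypothesis $C$ has at most one 4-vertex, so this 4-vertex is unique and the remaining four vertices of $C$ are all 3-vertices. By the cyclic symmetry of $C$, I may rename the vertices so that $w_0$ is the 4-vertex and $w_1,w_2,w_3,w_4$ are all 3-vertices.

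Now consider $w_1$. It is a 3-vertex, and its two neighbors on $C$ are $w_0$ (a 4-vertex) and $w_2$ (a 3-vertex). Hence $w_1$ is a 3-vertex that simultaneously has a 4-vertex and a 3-vertex among its three neighbors, directly contradicting Claim~\ref{clm:no-3-3-4}. Therefore $C$ has no 4-vertex.

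I do not anticipate any obstacle: the claim is essentially a specialization of Claim~\ref{clm:no-3-3-4} to the 5-face setting, and no new linear constraint needs to be added to $\lp$. The only subtlety is invoking symmetry of the 5-cycle to reduce to the case where $w_0$ is the 4-vertex, after which both $C$-neighbors of $w_1$ have predetermined degrees and the contradiction is immediate.
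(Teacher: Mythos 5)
Your argument is exactly the paper's: assume some $w_i$ is a 4-vertex, note the other four vertices of $C$ are 3-vertices, and observe that $w_{i+1}$ is then a 3-vertex adjacent to both a 4-vertex ($w_i$) and a 3-vertex ($w_{i+2}$), contradicting Claim~\ref{clm:no-3-3-4}. Correct, and same approach.
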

\begin{proof}
Suppose that $w_i$ is a $4$-vertex in $G$. Recall that $C$ has at least four $3$-vertices. Thus, $w_{i+1}$ is a 3-vertex that has a 3-vertex and a 4-vertex as neighbors, contradicting Claim~\ref{clm:no-3-3-4}
\end{proof}

\begin{observation}\label{obs:all-vers-U-dist}
Vertices in $X$ are $3$-vertices and pairwise distinct.
\end{observation}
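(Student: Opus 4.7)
The plan is to prove the two assertions of the observation separately, each reducing immediately to a structural result already established in the paper.

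For the first assertion (each $x_i$ is a $3$-vertex), by Claim~\ref{clm:5face-no-4ver} every vertex of $C$ is a $3$-vertex, so the three neighbors of $w_i$ are the two $3$-vertices $w_{i-1}, w_{i+1}$ on $C$ together with $x_i$. By Lemma~\ref{lm:min-deg-3}, $x_i$ is a $3^+$-vertex. If $x_i$ were a $4$-vertex, then $w_i$ would be a $3$-vertex whose neighborhood simultaneously contains a $3$-vertex (say $w_{i+1}$) and a $4$-vertex ($x_i$), directly contradicting Claim~\ref{clm:no-3-3-4}. Hence every $x_i$ must be a $3$-vertex.

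For the second assertion (pairwise distinctness), since the indices range over a set of size $5$, any two distinct indices are at cyclic distance $1$ or $2$ around $C$. If $x_i = x_{i+1}$ for some $i$, then $w_i w_{i+1} x_i$ forms a triangle, impossible since $G$ is triangle-free. If $x_i = x_{i+2}$ for some $i$, then $x_i w_i w_{i+1} w_{i+2}$ is a $4$-cycle containing the three $3$-vertices $w_i, w_{i+1}, w_{i+2}$ (all of which are $3$-vertices by Claim~\ref{clm:5face-no-4ver}), hence a $4$-cycle with at least three $3$-vertices, contradicting Corollary~\ref{cor:no-4Cycle-3-3vers}. Together these rule out all possible coincidences among the $x_i$'s.

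No real obstacle is anticipated here: both parts follow in one line each from results already proved. The only mild bookkeeping concern is making sure the cyclic-distance case analysis on indices modulo $5$ is exhaustive, which it is since $5$-element cyclic distances are only $1$ or $2$.
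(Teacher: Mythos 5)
Your argument is correct and follows essentially the same route as the paper: pairwise distinctness comes from triangle-freeness (for consecutive indices) and Corollary~\ref{cor:no-4Cycle-3-3vers} (for indices at cyclic distance~$2$), while the degree-$3$ claim comes from Lemma~\ref{lm:min-deg-3} and Claim~\ref{clm:no-3-3-4}. One small omission worth flagging: ruling out $\deg(x_i)=2$ and $\deg(x_i)=4$ only yields $\deg(x_i)=3$ because Claim~\ref{clm:no-deg-5} already guarantees $G$ has no $5^+$-vertex, a citation your write-up (and, to be fair, the paper's own terse proof) leaves implicit.
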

\begin{proof}
 Suppose that $x_i = x_{i+2}$ for some $i \in \{0,1,2,3,4\}$ (indices are mod 5). Then, $w_iw_{i+1}w_{i+2}x_i$  is a 4-cycle that has at least three 3-vertices, contradicting Corollary~\ref{cor:4face-one-4ver}. The fact that verices in $X$ are 3-vertices follows directly from Claim~\ref{clm:no-3-3-4}.
\end{proof}

\begin{lemma}\label{lm:no-5face-many-3vers}
Any 5-face of $G$ has at least two 4-vertices.
\end{lemma}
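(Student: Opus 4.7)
The plan is to reach a contradiction by combining an inductive deletion of the subgraph $H$ induced by $V(C) \cup X$ with a careful ``collection'' step. By Claim~\ref{clm:5face-no-4ver} all five $w_i$ are $3$-vertices, and by Observation~\ref{obs:all-vers-U-dist} the five external neighbors $X = \{x_0,\ldots,x_4\}$ are also distinct $3$-vertices; moreover Claim~\ref{clm:no-3-3-4} forces every non-$C$ neighbor of an $x_i$ to be a $3$-vertex as well. So $|V(H)| = 10$ and the only between vertices of $H$ are the $x_i$'s.

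First I would pin down the internal structure of $H$. The edges $x_i x_{i+1}$ are forbidden, because together with the path $x_i w_i w_{i+1} x_{i+1}$ they would yield a $4$-cycle with two adjacent $3$-vertices, contradicting Corollary~\ref{cor:4face-one-4ver}. So the only possible extra edges in $H$ are the diagonals $x_i x_{i+2}$. Since the $x_i$ appear in cyclic order in the outer face of $C$, planarity forces these diagonals to form a non-crossing chord diagram on five cyclically-placed points, so at most two of them can coexist. Letting $e_X$ denote the number of diagonals, we get $m(H) = 10 + e_X$ and $\deg_G(H) = 10 - 2e_X$, so deleting $V(H)$ removes $10$ vertices and $20 - e_X \ge 18$ edges. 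Because $H$ is connected, Claim~\ref{clm:meta-cub-excl} gives $p(G\setminus H) = q(G\setminus H) = 0$.

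The central step is to show that, after applying the inductive hypothesis to $G \setminus H$ to obtain an induced forest $F$ of size at least $a n(G\setminus H) - b m(G\setminus H)$, we can choose six vertices $S \subseteq V(H)$ so that $F \cup S$ is an induced forest of $G$. Here I would case-split on $e_X \in \{0,1,2\}$ and, within each case, adaptively pick $S$ based on which trees of $F$ the external neighbors of the $x_i$'s lie in. Candidate sets look like $\{w_i, w_{i+2}\} \cup \{x_{i+1}, x_{i+3}, x_{i+4}\} \cup \{\text{one more leaf}\}$, modified so that every connected component of $S$ has at most one external edge into any single tree of $F$; exploiting triangle-freeness and Claim~\ref{clm:no-3-3-4}, together with planarity of the $x_i$-neighborhoods, this can be arranged. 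When this collection succeeds, Observation~\ref{obs:meta-exclusion} requires $6 - 10a + (20 - e_X)b < 0$; but $6 - 10a + (20 - e_X)b \ge 6 - 10a + 18b = 2(3 - 5a + 9b) \ge 0$ by Inequality~\eqref{lpc:min-deg-3a}, a contradiction proving the lemma.

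The hard part will be the collection step: because each $x_i$ may carry up to two external edges into arbitrary parts of $V(G) \setminus V(H)$, a straightforward independent-set argument is not enough to guarantee six safely collectible vertices. I expect the proof to either refine the choice of $S$ based on the tree structure of $F$, or (in the spirit of Lemma~\ref{lm:no-4face-1-3vers} and Lemma~\ref{lm:no-4face-2-4vers}) fall back on a graph-modification argument that deletes $V(H)$ and adds one or two bridging edges to maintain triangle-freeness while controlling the induced $Q_3^{1-}$ and $T_6$ subgraphs of the modified graph, yielding the same LP-level contradiction through~\eqref{lpc:min-deg-3a}.
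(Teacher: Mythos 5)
Your main route---delete all ten vertices of $H = G[V(C)\cup X]$ and try to collect six---has a genuine gap, which you yourself flag at the end but do not resolve. The obstruction is fundamental: every $x_i$ is a $3$-vertex with two edges leaving $H$ (after discounting the pendant $x_iw_i$), and to include $x_i$ in the collected set $S$ you must ensure that its two external neighbors are not in the same tree of the forest $F$ in $G\setminus H$. Since $F$ is given to you by induction and you have no control over it, there is no a priori reason any particular $x_i$ is safe to collect, and the ``adaptive'' choice you sketch would need to succeed for every possible $F$; you give no argument for this. In the worst case (e.g.\ $e_X = 0$, where all ten external edges are live), the $w_i$'s alone give at most $4$ collectible vertices, and every extra vertex has to be some $x_i$ whose two external edges you cannot control. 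So the case split you propose does not close, and Inequality~\eqref{lpc:min-deg-3a} never gets to be invoked.

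The paper sidesteps all of this with a graph-modification argument much closer to your fallback idea, but more surgical than what you describe: it deletes only the three vertices $\{w_0,w_3,w_4\}$ and adds the two bridging edges $x_0w_1$ and $x_3w_2$, forming $G'$ with $n(G') = n(G)-3$ and $m(G') = m(G)-5$. It then rules out $T_6$ components in $G'$ (since $x_4$ is the only $2$-vertex) and rules out $Q_3^{1-}$ subgraphs in $G'$ by noting that any such $Q_3$ would have a $4$-face containing neither new edge, hence a genuine $4$-cycle of $G$ with at least two $3$-vertices, contradicting Corollary~\ref{cor:4face-one-4ver}. With $p(G') = q(G') = 0$, the minimal counter-example hypothesis gives a forest $F'$ in $G'$ of the required size, and adding back $\{w_0,w_3\}$ is a trivially safe collection (each of $w_0,w_3$ has all its surviving neighbors already identified into a path via the new edges), yielding the contradiction via $2 - 3a + 5b \ge 0$ (Inequality~\eqref{lpc:2v-2vb}), not~\eqref{lpc:min-deg-3a}. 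The crucial point you are missing is that by adding the edges $x_0w_1$ and $x_3w_2$ one makes $w_0$ and $w_3$ ``protected'': their deleted neighbor $w_4$ is gone and their two remaining neighbors are forced into the same component of $G'$, so collecting them can never close a cycle regardless of what $F'$ is. Your proposal never achieves that kind of control over the collected set.
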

\begin{proof}
 By Claim~\ref{clm:5face-no-4ver}, a 5-face $C$ that has at most one 4-vertex actually has no 4-vertex. By Corollary~\ref{cor:4face-one-4ver}, $x_i$ and $x_{i+1}$ are non-adjacent, for any $i$ such that $ 0 \leq i \leq 4$. Let $G'$ be the graph obtained from $G$ by removing $\{w_0,w_3,w_4\}$ and adding edges $x_0w_1,x_3w_2$.  $G'$ has no $T_6$ component since only $u_4$ is a 2-vertex in $G'$. Suppose that $G'$ contains a $Q_3^{1-}$ subgraph, say $H$, of $G$. Since the graph obtained from $G$ by removing $\{w_0,w_3,w_4\}$ has no $Q_3^{1-}$ subgraph, $H$ must contain at least one of two new edges $x_0w_1,x_3w_2$. Since $H$ has six 4-faces, there is at least 4-face, say $C_0$, of $H$ that contains no new edge. Thus, $C_0$ is also a 4-cycle in $G$. Except $x_4$, all vertices  in $G'$ has the same degree as in $G$. Thus, $C_0$ has at least two 3-vertices, contradicting Corollary~\ref{cor:4face-one-4ver}. In summary, $p(G') = q(G') = 0$. Hence, $G'$ has an induced forest $F'$ of order at least $an(G') - bm(G')$. Let $F = F' \cup \{w_0,w_3\}$. $F$ is an induced forest of $G$ of order at least $an(G') -bm(G') + 2$. Since $n(G') = n(G) - 3$ and $m(G') = m(G)-5$, we have:
\begin{equation*}
|F| \geq an(G) -bm(G) + 2 - 3 a + 5b
\end{equation*}
Since $2-3a + 5b$ is non-negative by Inequality~\eqref{lpc:2v-2vb}, $|F| \geq an(G) -bm(G)$, contradicting that $G$ is a counter-example.
\end{proof}
\paragraph{Proof of Theorem~\ref{thm:sub-main}} We have shown that if $a,b,c,d$ satisfy all constraints in $\lp$, a counter-example graph $G$ must be two-connected, have $\delta(G) \geq 3$, have no 4-face with at least one 3-vertex and have no 5-face with at least four 3-vertices, contradicting Theorem~\ref{thm:structure}. To finish the proof of Theorem~\ref{thm:sub-main},  we only need to show that Linear Program $\lp$ that consists of constraints from~\eqref{lpc:Q-3} to~\eqref{lpc:4f-1-3v} is equivalent to Linear Program~\eqref{eq:our-linear}. We observe that the set of constraints in Linear Program~\eqref{eq:our-linear} is a subset of the set of constraints in $\lp$ since:

\begin{equation*}
\begin{split}
(\text{\ref{seq:our-triv3}}) &= (\text{\ref{lpc:4f-2va}}),(\text{\ref{seq:our-triv4}}) = (\text{\ref{lpc:4f-2vb}}), (\text{\ref{seq:our-deg-5}}) = (\text{\ref{lpc:deg-5}}), (\text{\ref{seq:Q-3-ex}}) = (\text{\ref{lpc:Q-3}}), (\text{\ref{seq:T-3-ex}}) = (\text{\ref{lpc:T-6}}), (\text{\ref{seq:Q-3-1-ex}}) = (\text{\ref{lpc:Q-3-1a}}), (\text{\ref{seq:Q-3-1-ex1}}) = (\text{\ref{lpc:Q-3-1b}})\\
(\text{\ref{seq:T-6-1-ex}}) &= (\text{\ref{lpc:T-6-1}}), (\text{\ref{seq:Q-3-2-ex}}) = (\text{\ref{lpc:Q-3-2a}}), (\text{\ref{seq:Q-3-2-ex1}}) = (\text{\ref{lpc:Q-3-2b}}), (\text{\ref{seq:Q-3-3-ex}}) = (\text{\ref{lpc:Q-3-3}}), (\text{\ref{seq:T-3-45-ex}}) = (\text{\ref{lpc:T-6-4-5a}}), (\text{\ref{seq:T-3-45-ex1}}) = (\text{\ref{lpc:T-6-4-5b}}), (\text{\ref{seq:ex-2-2}}) = (\text{\ref{lpc:2v-2va}}) 
\end{split}
\end{equation*}

Here we note that Inequality~\eqref{seq:T-6-1-ex} is equivalent to Inequality~\eqref{lpc:T-6-1} when $t = 0$. Remaining constraints of $\lp$, we express as linear combinations of constraints in Linear Program~\ref{eq:our-linear} as follows: 

\begin{align*}
(5t+4)-(8t+6)a + (13t+9)b - d ~(\text{\ref{lpc:T-6-1}}) &=  (\text{\ref{seq:T-6-1-ex}}) + t(\text{\ref{seq:Q-3-1-ex}})\\
4-6a + 10b - d ~(\text{\ref{lpc:T-6-2}})&=  (\text{\ref{seq:T-6-1-ex}}) + (\text{\ref{seq:our-triv}})\\
5 - 6a + 16b - c - d  ~(\text{\ref{lpc:Q-3-4}})&=  (\text{\ref{seq:Q-3-3-ex}}) +  (\text{\ref{seq:our-triv}})\\
4 - 6a + 11b - d ~(\text{\ref{lpc:T-6-3}}) &=  (\text{\ref{seq:T-6-1-ex}}) + 2(\text{\ref{seq:our-triv}})\\
5 - 8a + 17b  - c -d~(\text{\ref{lpc:Q-3-5}}) &=  (\text{\ref{seq:Q-3-3-ex}}) + 2 (\text{\ref{seq:our-triv}})\\
 1 - 2a + 5b~(\text{\ref{lpc:2v-4v}}) &=  (\text{\ref{seq:our-deg-5}}) + (\text{\ref{seq:our-iso1}})\\
 2 - 3a + 5b ~(\text{\ref{lpc:2v-2vb}})&=   2(\text{\ref{seq:our-iso1}}) + (\text{\ref{seq:our-deg-5}})\\
 3 - 5a + 9b ~(\text{\ref{lpc:min-deg-3a}})&=   (\text{\ref{seq:ex-2-2}}) + (\text{\ref{seq:our-deg-5}})\\
 4 - 6a + 9b ~(\text{\ref{lpc:min-deg-3b}})&=   (\text{\ref{seq:T-6-1-ex}}) + (\text{\ref{seq:our-triv2}})  \\
 6 - 9a + 14b ~(\text{\ref{lpc:3v-4va}})&=   (\text{\ref{seq:Q-3-1-ex}}) + (\text{\ref{seq:our-triv}}) + (\text{\ref{seq:our-iso1}})\\
 1 + 5b - 2a ~(\text{\ref{lpc:3v-4vb}})&= (\text{\ref{seq:our-deg-5}}) + (\text{\ref{seq:our-iso1}})\\
 4 - 7a + 14b ~(\text{\ref{lpc:sep4-three-3v}})&=   (\text{\ref{seq:our-iso1}}) + 2(\text{\ref{seq:our-deg-5}}) + (\text{\ref{seq:ex-2-2}})\\
 7 - 11a + 18b ~(\text{\ref{lpc:2e-one-4f}})&= (\text{\ref{seq:our-iso1}}) + 2(\text{\ref{seq:our-deg-5}}) +(\text{\ref{seq:ex-2-2}}) \\
 3 - 6a + 14b~(\text{\ref{lpc:2v-4vX}})&=   (\text{\ref{seq:ex-2-2}}) + 2(\text{\ref{seq:our-deg-5}})\\
 7 - 12a + 23b ~ (\text{\ref{lpc:4f-2-3v}})&= 2(\text{\ref{seq:ex-2-2}}) + 3(\text{\ref{seq:our-deg-5}})+(\text{\ref{seq:our-iso1}})\displaybreak[3]\\
 3 - 6a + 15b - c ~(\text{\ref{lpc:xyz-adj}}) &= (\text{\ref{seq:our-deg-5}}) + (\text{\ref{seq:T-3-45-ex}})\\
 4 -8a + 19b ~(\text{\ref{lpc:x-y-4v}})&= 3(\text{\ref{seq:our-deg-5}}) + (\text{\ref{seq:ex-2-2}}) + (\text{\ref{seq:our-iso1}})\\
 3-7a + 19b ~(\text{\ref{lpc:4f-1-3v}})&= (\text{\ref{seq:ex-2-2}}) + 3(\text{\ref{seq:our-deg-5}})
\end{align*}

\section{Conclusion}
We have introduced a new approach that can  handle special graphs of small order separately to find an induced forest of order at least $\frac{5n}{9}$ in triangle-free planar graphs of order $n$. It would be very interesting to see whether our method can be employed to give a better bound on the order of the largest induced forest in girth-5 planar graphs~\cite{DMP14} and the order of the induced forest in subcubic (non-planar) graphs of girth at least four and five~\cite{KL16}. Another direction is to improve our analysis to obtained $\frac{4n}{7}$ bound. This would match the bound obtained by Wang, Xie and Yu~\cite{WXY16} for bipartite planar graphs and would possibly give a simpler and more general proof than the  proof by Wang, Xie and Yu. The ultimate goal is to resolve the conjecture of Akiyama and Watanabe and we would like to see if our method can be extended to resolve this conjecture as well. 

\paragraph*{Acknowledgment.} We thank Baigong Zheng for proofreading this paper. We thank conversations with Glencora Borradaile and Melissa Sherman-Bennett during the development of this work. We also would like to thank Bojan Mohar for pointing out mistakes in the statement of Theorem~\ref{thm:main} in earlier versions of this paper. This material is based upon work supported by the National Science Foundation under Grant No.\ CCF-1252833.
\nocite{KLS10, DMP14,Salavatipour06, AW87,LMZ15,WXY16,AB79, Borodin79,AMT01, CW13}
 \bibliographystyle{plain}
 \bibliography{inducedforest}
 
\end{document}